\newtheorem{theorem}{Theorem}[section]
\newtheorem{lemma}[theorem]{Lemma}
\newtheorem{proposition}[theorem]{Proposition}
\newtheorem{corollary}[theorem]{Corollary}
\newtheorem{remark}[theorem]{Remark}
\newcommand{\bfind}[1]{\index{#1}{\bf #1}}
\newcommand{\n}{\par\noindent}
\newcommand{\sn}{\par\smallskip\noindent}
\newcommand{\mn}{\par\medskip\noindent}
\newcommand{\bn}{\par\bigskip\noindent}
\newcommand{\pars}{\par\smallskip}
\newcommand{\parm}{\par\medskip}
\newcommand{\parb}{\par\bigskip}
\newcommand{\cG}{{\mathcal G}}
\newcommand{\ovl}[1]{\overline{#1}}
\newcommand{\fs}{^{\uparrow}}
\newcommand{\MX}{_{\rm max}}
\newcommand{\mfs}{-_{\rm mc}}
\newcommand{\ms}{-_{\rm fs}}
\newcommand{\ann}{\mbox{\rm ann}\,}
\newcommand{\Delt}{\Delta}
\newcommand{\dhat}{^\diamondsuit}
\newcommand{\cO}{\mathcal{O}}
\newcommand{\cM}{\mathcal{M}}
\newcommand{\cR}{\mathcal{R}}
\newcommand{\Z}{\mathbb Z}
\newcommand{\N}{\mathbb N}
\newcommand{\Q}{\mathbb Q}
\begin{document}

\title[Cuts and ideals over valuation rings]{Arithmetic of cuts in ordered abelian groups 
and of ideals over valuation rings}
\author{Franz-Viktor and Katarzyna Kuhlmann}
\address{Institute of Mathematics, University of Szczecin,	
ul.\ Wielkopolska 15, 70-451 Szczecin, Poland}
\email{fvk@usz.edu.pl}
\email{katka314@gmail.com}

\thanks{We thank the referee for his many very helpful corrections and comments.}

\subjclass[2010]{Primary 06F20, 13F30; secondary 12J25, 13A15.}
\keywords{ordered abelian group, cut, final segment, valuation ring, ideal, quotients of
ideals over valuation rings, annihilator.}

\date{6.\ 10.\ 2025}

\begin{abstract}
We investigate existence, uniqueness and maximality of solutions $T$ for equations 
$S_1+T=S_2$ and inequalities
$S_1+T\subseteq S_2$ where $S_1$ and $S_2$ are final segments of ordered abelian groups.
Since cuts are determined by their upper cut sets, which are final segments, this gives
information about the corresponding equalities and inequalities for cuts. We apply our
results to investigate existence, uniqueness and maximality of solutions $J$ for equations
$I_1 J=I_2$ and inequalities $I_1 J\subseteq I_2$ where $I_1$ and $I_2$ are ideals of
valuation rings. This enables us to compute the annihilators of quotients of the form 
$I_1/I_2\,$.

\end{abstract}

\maketitle

%
%
\section{Introduction}
For the notions and notation we use in this paper, see Sections~\ref{sectprelfs},  
\ref{sectfsoag}, \ref{sectprelid} and~\ref{sectivr}. 
One main purpose of this manuscript is to address a problem that comes up in the papers
\cite{pr1,pr2}. There we work with valued field extensions $(L|K,v)$ of prime degree with
unique extension of the valuation $v$ from $K$ to $L$. We show that the module of
relative differentials (Kähler differentials) for the extension $\cO_L|\cO_K$ of the
respective valuation rings is isomorphic, as $\cO_L$-module, to a quotient $U/UV$ of ideals $U$ and $V$ of $\cO_L\,$. We wish to compute the annihilator of
such a quotient $U/UV$. It will contain $V$, but there are cases where it is bigger.

In the present manuscript, we will consider the more general case of quotients $I_1/I_2$ of
$\cO_v$-ideals $I_1,I_2$ (see Section~\ref{sectann}). To this end, we consider equations 
$I_1 J=I_2$ and inequalities $I_1 J\subseteq I_2\,$, as we will describe in detail below.

Our approach is to translate questions about ideals to corresponding questions about
final segments in
the value group $vK$ of $(K,v)$. The latter are connected with the investigation of cuts in 
ordered abelian groups which has appeared at several points in the literature. For a survey 
and a list of references, see \cite{Ku60}. The present paper uses, and improves, parts of 
the unpublished manuscript \cite{pr3}. Let us describe the background on cuts first.

Take an ordered abelian group $(G,\leq)$. By a \bfind{cut} in $(G,\leq)$ we mean a pair 
\[
\Lambda\>=\>(\Lambda^L,\Lambda^R)\>,
\]
where $\Lambda^L$ is an initial segment of $G$ and $\Lambda^R$ is a final segment of $G$
such that $\Lambda^L\cup\Lambda^R=G$ and $\Lambda^L\cap\Lambda^R=\emptyset$. We call 
$\Lambda^L$ the \bfind{lower cut set} of $\Lambda$, and $\Lambda^R$ the \bfind{upper cut 
set} of $\Lambda$. For basic information on cuts and final segments, see
Sections~\ref{sectprelfs} and~\ref{sectfsoag}. {\it Note that when we speak of final 
segments we will always assume that they are proper nonempty subsets of $G$.}

In the present paper we are particularly interested in the \bfind{addition of
cuts}. The two immediately obvious ways to define the sum $\Lambda_1+\Lambda_2$ of two
cuts $\Lambda_1$ and $\Lambda_2$ are the following:
\sn
1) \ the lower cut set of $\Lambda_1+\Lambda_2$ is $\Lambda_1^L+\Lambda_2^L$,
\n
2) \ the upper cut set of $\Lambda_1+\Lambda_2$ is $\Lambda_1^R+\Lambda_2^R$,
\sn
where for any $S_1,S_2\subseteq G$, $S_1+S_2:=\{\alpha+\beta\mid \alpha\in S_1\,,\, \beta
\in S_2\}$. The two additions usually do not yield the same results, but their properties 
are very similar. Under both additions, the set of all cuts in $G$ is a commutative monoid.
However, in general it is not a group as for fixed $\Lambda$ the function
\[
\Lambda'\>\mapsto\> \Lambda+\Lambda' 
\]
may not be injective. There may even be idempotents; see Section~\ref{sectfsoag}. 

In the present paper, we will exclusively work with the second definition, which we 
call the \bfind{upper cut set addition}. Every cut is uniquely determined by its upper cut 
set, so instead of cuts we will just work with final segments (for a reason that will be
explained later). The set of final segments of $G$ is linearly ordered by inclusion, and 
unions, intersections and sums of any collections of final segments are again final
segments, as long as they are proper nonempty subsets of $G$.

\pars
From our observations about cut addition, the following questions arise:
\sn
{\it Take final segments $S_1$ and $S_2$ of $G$. 
\sn
(QFS1) \ Is there a final segment $T$ of $G$ such that
\begin{equation}                            \label{eqT}
S_1\,+\,T\>=\>S_2 
\end{equation} 
holds? 
\n
(QFS2) \ If yes, is $T$ uniquely determined? If so, determine it.
\n
(QFS3) \ If $T$ exists but is not uniquely determined, determine the largest final segment
$T\MX$ such that $S_1+T\MX = S_2\,$. 
\n
(QFS4) \ If $T$ does not exist, compute the largest final segment $T\MX$ such that $S_1 +
T\MX\subset S_2\,$.} 

One aim of this paper is to answer these questions, which will be done in 
Section~\ref{sectfseq}. Obviously,
\begin{equation}                                \label{-}
S_2\,\ms\, S_1\>:=\>\{\alpha\in G\mid \alpha+S_1\subseteq S_2\}
\end{equation}
is the largest solution $T\MX$ for the inequality $S_1 + T\subseteq S_2\,$; hence if a
solution of (\ref{eqT}) exists, then $S_2\ms S_1$ is the largest. However, in the
literature there does not seem to exist any recipe for computing $S_2\ms S_1$ in terms of
(elementwise) sums and differences of final segments derived from $S_1$ and $S_2$ that
would help solve the problems that came up in the papers \cite{pr1,pr2}
and which we will explain below. 

A key to the answers to the above questions is the notion of {\it invariance group} of a 
final segment, which always is a convex subgroup of $G$ (see Section~\ref{sectig}). This 
sheds light on the important role that convex subgroups play in nonarchimedean ordered 
groups for the answers to the above questions. The case of archimedean ordered groups, 
which have no proper nontrivial convex subgroups, is significantly easier. Our answers to 
the above questions are given in Theorems~\ref{solv} and~\ref{maxsol3}.

Our interest in these questions arose from our work with cuts and its applications 
to ordered abelian groups and ordered fields, see \cite{Ku60,pr3} and the citations therein.
Recently, it gained even more importance for us because of applications to the theory of 
ideals over valuation rings.

Take a valued field $(K,v)$ with value group $vK$ and valuation ring $\cO_v$. For $a\in K$ 
we write $va$ in place of $v(a)$. When we talk of \bfind{$\cO_v$-ideals} we will always 
assume them to be nonzero, and we also include fractional ideals; in other words, {\it we 
talk about .} Just like
the set of final segments of $vK$, the set of $\cO_v$-ideals is linearly ordered by 
inclusion, and unions, intersections and products of any collections of $\cO_v$-ideals are
again $\cO_v$-ideals, as long as they are nonzero $\cO_v$-modules properly contained
in $K$. The function
\begin{equation}                           \label{vIS}
v:\> I\>\mapsto\> vI\>:=\> \{vb\mid 0\ne b\in I\}
\end{equation}
is an order preserving bijection from the set of all $\cO_v$-ideals onto the set of all
final segments of $vK$: $J\subseteq I$ holds
if and only if $vJ\subseteq vI$ holds. The inverse of this function 
is the order preserving function
\begin{equation}                           \label{S->IS}
S\>\mapsto\> I_S\>:=\> (a\in K\mid va\in S)\>=\>\{a\in K\mid va\in S\}\cup\{0\}\>.
\end{equation}
Further, the function (\ref{vIS}) is a homomorphism from the multiplicative monoid of 
$\cO_v$-ideals onto the additive monoid of final segments:
\begin{equation}                           \label{vIS+}
vIJ\>=\> vI+vJ\;\;\mbox{ and }\;\; I_{S+S'}\>=\>I_S I_{S'}\>.
\end{equation}

Via the function (\ref{vIS}), the following questions translate to questions (QFS1)-(QFS4): 
\sn
{\it Take nonzero $\cO_v$-ideals $I_1$ and $I_2\,$.
\sn
(QID1) \ Is there an $\cO_v$-ideal $J$ such that            
\begin{equation}                       \label{eqid}
I_1\,J\>=\>I_2 
\end{equation} 
holds? 
\n
(QID2) \ If yes, is $J$ uniquely determined? If so, compute it.
\n
(QID3) \ If $J$ exists but is not uniquely determined, compute the largest $\cO_v$-ideal 
$J\MX$ such that $I_1 J\MX=I_2\,$.
\n
(QID4) \ If $J$ does not exist, compute the largest $\cO_v$-ideal $J\MX$ such that 
$I_1 J\MX\subset I_2\,$.} 

We will derive answers to these questions in Section~\ref{sectideq}, using the results of 
Section~\ref{sectfseq}. Here, a key role is played by the notion of the {\it invariance
valuation ring} of an ideal $I$, which corresponds to the invariance group of the final 
segment~$vI$.

Obviously,
\begin{equation}                                \label{:}
I_2: I_1\>:=\>\{a\in K\mid aI_1\subseteq I_2\}
\end{equation}
is the largest solution $J\MX$ for the inequality $I_1J\subseteq I_2\,$. Again, in the
literature there does not seem to exist any recipe for computing $I_1:I_2$ from
$I_1$ and $I_2$ that would help solve the problems addressed in Section~\ref{sectann}. 
This section is devoted to the computation of the annihilators $I_2:I_1$ of 
quotients $I_1/I_2$ of $\cO_v$-ideals $I_1,I_2$ with $I_2\subseteq I_1\,$. The results
are then applied in Section~\ref{sectannsp} to compute the annihilators of 
some special quotients that appear in \cite{pr1} and \cite{pr2} and to answer the question 
in which cases they are equal to the maximal ideal $\cM_v$ of $\cO_v\,$. Our answers to the
above questions are given in Theorems~\ref{idsolv2} and~\ref{idsolv3}.

\pars
Ideals over valuation domains have been studied extensively in the literature, see for 
instance \cite[Chapter II, \S4]{FS}. This article lists a number of properties of such 
ideals; in 
Section~\ref{sectM(I)} we present the corresponding properties of final segments of the 
value group and use them to give alternate proofs for the properties of ideals $I$ by
deriving them from the properties of the final segments $vI$.

Equations of the form $I_1 J=I_2$ are implicitly related to groups in the semigroups 
formed by the isomorphy classes of the ideals, which for instance are studied in \cite{BS}. 
Further, the connection between ideals of valuation rings and the final segments of value 
groups is described and used in \cite{BS}, and it is pointed out that something similar
was already done by Paulo Ribenboim in \cite{R}. In fact, Ribenboim arrives at the same
criteria that answer questions (QFS1) and (QID1) as we give in Theorems~\ref{solv}
and~\ref{idsolv2}.
Also invariance groups appear in \cite{R}, in a somewhat disguised form. However, the
machinery used by Ribenboim and cited in \cite{BS} is heavy. As it turns out in the present
paper, embedding the value group in a Hahn product and even working with the natural 
valuation of an ordered abelian group is not necessary. The fact that final segments in 
ordered abelian groups of higher rank may not have infima in suitably defined extensions of 
the group may be scary, but it really is no obstacle. 


\mn
%
%
%
\section{Final segments}                   \label{fs}
%

%
%
%
\subsection{Preliminaries on cuts and final segments}          \label{sectprelfs}
\mbox{ }\sn
Take $(\Gamma,\leq)$ to be the underlying ordered set of any nonzero ordered abelian group
({\it by ``ordered'', we will always mean ``totally ordered''}). Note that $(\Gamma,\leq)$
does not have a smallest or largest element, and the definitions in this section also work
without the presence of the group structure. If $M_1, M_2$ are nonempty subsets of
$\Gamma$ and
$\alpha\in \Gamma$, we will write $\alpha<M_2$ if $\alpha<\beta$ for all $\beta\in M_2$, 
and we will write $M_1<M_2$ if $\alpha<M_2$ for all $\alpha\in M_1$.
Similarly, we use the relations $>$, $\leq$ and $\geq$ in place of $<$.

An ordered set $(\Gamma,\leq)$ is \bfind{discretely ordered} if every element has an 
immediate successor, and it is \bfind{densely ordered} if for every $\alpha,\beta\in 
\Gamma$ with $\alpha<\beta$ there is $\gamma\in\Gamma$ with $\alpha<\gamma<\beta$.

A subset $M$ of $\Gamma$ is called \bfind{convex} {\bf in} $(\Gamma,\leq)$ if for
every two elements $\alpha,\beta\in M$ and every $\gamma\in \Gamma$ such that $\alpha\leq 
\gamma\leq\beta$, it follows that $\gamma\in M$. A subset $S$ of $\Gamma$ is an 
\bfind{initial segment} {\bf of} $\Gamma$ if for every $\alpha\in S$ and every $\gamma\in
\Gamma$ with $\gamma \leq \alpha$, it follows that $\gamma\in S$. Symmetrically, $S$ is a
\bfind{final segment} {\bf of} $\Gamma$ if for every $\alpha \in S$ and every $\gamma\in 
\Gamma$ with $\gamma\geq \alpha$, it follows that $\gamma\in S$. {\it Throughout, 
we will assume initial and final segments to be nonempty and not equal to $\Gamma$.} Then
$S$ is a final segment of $\Gamma$ if and only if $\Gamma\setminus S$ is an initial
segment of $\Gamma$. We will denote the set of all final segments of $(\Gamma,\leq)$ by
\[
\Gamma\fs\>. 
\]

If $\emptyset\ne M_1\,,\,M_2\subsetneq \Gamma$ are such that $M_1\leq M_2$ and $\Gamma=M_1
\cup M_2$, then we call $(M_1,M_2)$ a \bfind{quasi-cut} in $\Gamma$, and we write
$\Lambda^L=M_1\,$, $\Lambda^R=M_2\,$, and $\Lambda=(\Lambda^L,\Lambda^R)$. It follows that
$M_1$ is an initial segment of $\Gamma$, $M_2$ is a final segment of $\Gamma$, and the
intersection of $M_1$ and $M_2$ consists of at most one element. If this intersection is
empty, then $(M_1,M_2)$ is called a \bfind{(Dedekind) cut} in $\Gamma$.

\pars
For any nonempty subset $M\subseteq \Gamma$, we set
\[
M^+\>:=\>\{\gamma\in \Gamma\mid \gamma>M\}\>.
\]
If this is nonempty, it is the largest final segment having empty intersection with $M$.
Similarly, we set
\[
M^-\>:=\>\{\gamma\in \Gamma\mid \exists m\in M: \gamma\geq m\}\>.
\]
That is, if $M^-\ne\Gamma$, then it  is the smallest final segment of $\Gamma$ which
contains $M$.

We will write $\gamma^+$ instead of $\{\gamma\}^+$ and $\gamma^-$ instead of $\{\gamma\}^-$. 
A final segment of the form $\gamma^-$ will be called \bfind{principal}; it has smallest 
element $\gamma$. The principal final segments are exactly the final segments that contain
smallest elements. Analogously, we define an initial segment to be \bfind{principal} if it
contains a largest element.

For every subset $M$ of $\Gamma$, we will denote by $M^c$ its complement in $\Gamma$, i.e., 
\[
M^c=\Gamma\setminus M\>.
\] 
We note that if $S\subsetneq\Gamma$ is a final segment of $\Gamma$, then $S^c=\{\gamma\in
\Gamma\mid \gamma<S\}$ is an initial segment and $(S^c,S)$ is a cut.

\pars
The proof of the following lemma is straightforward.
\begin{lemma}                                          \label{infsup}
Take a final segment $S$ of the ordered set $\Gamma$. 
\sn
1) If $S$ is nonprincipal but has infimum 
$\gamma$ in $\Gamma$, then $S^c$ is principal with largest element $\gamma$. 
\sn
2) We have that $S^c$ has a supremum if and only if $S$ has an infimum in $\Gamma$.
\sn
3) If $\,\Gamma$ is densely ordered, then an element $\gamma\in \Gamma$ is an infimum of
$S$ if and only if it is a supremum of $S^c$. \qed
\end{lemma}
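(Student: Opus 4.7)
The entire argument pivots on the observation that for the cut $(S^c,S)$, every $\sigma\in S^c$ lies strictly below every $\tau\in S$; consequently every element of $S^c$ is a lower bound for $S$, and every element of $S$ is an upper bound for $S^c$. Most of the work consists in chasing the consequences of this single fact.

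For part~1 the plan is: since $S$ is nonprincipal, the infimum $\gamma=\inf S$ cannot lie in $S$ (otherwise it would be a smallest element of $S$), so $\gamma\in S^c$. Any $\delta\in S^c$ is a lower bound for $S$, hence $\delta\leq\gamma$ by the definition of infimum, which exhibits $\gamma$ as $\max S^c$ and makes $S^c$ principal.

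For part~2, I would split each direction according to whether the extremum lies in $S$ or in $S^c$. If $\inf S=\beta\in S^c$, then the same reasoning as in part~1 shows $\beta=\max S^c=\sup S^c$. If $\beta\in S$, then $\beta=\min S$ and $S^c=\{\sigma:\sigma<\beta\}$; here $\beta$ is an upper bound for $S^c$, and any strictly smaller upper bound is forced to lie in $S^c$ itself, hence to equal $\max S^c$, so $\sup S^c$ exists in either subcase. The reverse implication is the symmetric analysis starting from $\sup S^c=\alpha$: if $\alpha\in S$ then $\alpha=\min S=\inf S$, and if $\alpha\in S^c$ then $\alpha$ is a lower bound for $S$ and is the largest such, since any strictly larger lower bound must lie in $S$ and hence be $\min S=\inf S$.

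For part~3, the inequality $\sup S^c\leq\inf S$ is automatic whenever both sides exist, because every $\sigma\in S^c$ is itself a lower bound for $S$ and therefore at most $\inf S$, making $\inf S$ an upper bound for $S^c$. Under density, assuming $\gamma=\inf S$ (so that $\alpha=\sup S^c$ exists by part~2), if $\alpha<\gamma$ then density produces $\delta$ with $\alpha<\delta<\gamma$; such $\delta$ can belong to neither $S^c$ (which would force $\delta\leq\alpha$) nor $S$ (which would force $\delta\geq\gamma$), a contradiction. Hence $\alpha=\gamma$, and the converse is symmetric. The only moment that calls for genuine attention is the case bookkeeping in part~2 around the principal versus nonprincipal subcases; beyond that, every claim is a one-line consequence of $S^c<S$.
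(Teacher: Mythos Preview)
Your proof is correct and is precisely the kind of straightforward argument the paper has in mind; the paper itself omits the proof entirely, remarking only that it is ``straightforward''. Your case analysis in part~2 and the density contradiction in part~3 are the natural way to carry this out.
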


Every nonempty ordered set $(\Gamma,\leq)$ without smallest or largest element carries a
topology with the open intervals $(\alpha,\beta)$, $\alpha,\beta\in\Gamma$ with
$\alpha<\beta$, as its basic open sets. We leave the proof of the following lemma
to the reader.
\begin{lemma}                          \label{cl-open}
Take a final segment $S$ in $\Gamma$.
\sn
1) If $S$ is principal, then it is closed, and it is clopen if and only if $\,\Gamma$ is
discretely ordered.
\sn
2) If $S$ does not have an infimum in $\Gamma$, then it is clopen.
\sn
3) If $S$ is nonprincipal but has an infimum in $\Gamma$, then $S$ is open, but not 
closed. 
\sn
4) The final segment $S$ is closed if and only if $S$ is principal or does not have an 
infimum in $\Gamma$.     \qed
\end{lemma}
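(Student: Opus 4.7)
The plan is to verify each of the four statements by checking, for each point of $S$ or of its complement, whether a basic open interval $(\alpha,\beta)$ containing it can be found inside the corresponding set, and by invoking Lemma~\ref{infsup} to translate between infima of $S$ and suprema of $S^c$.

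For part (1), suppose $S=\gamma^-$. For any $\delta\in S^c=\{x:x<\gamma\}$, the interval $(\alpha,\gamma)$ is contained in $S^c$ for any $\alpha<\delta$, so $S^c$ is open and $S$ is closed. Openness of $S$ itself amounts to finding an open neighborhood of its minimum $\gamma$ inside $S$, which requires an interval $(\alpha,\beta)$ with $\alpha<\gamma<\beta$ and $(\alpha,\gamma)=\emptyset$; equivalently, $\gamma$ admits an immediate predecessor. In the ordered abelian group context of the paper, translation propagates this property from one element to every element, so $\gamma$ admits an immediate predecessor if and only if every element of $\Gamma$ does, which in turn is equivalent to $\Gamma$ being discretely ordered.

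For parts (2) and (3), Lemma~\ref{infsup} is the main tool. If $S$ has no infimum, then $S^c$ has no supremum, so $S$ has no minimum and $S^c$ has no maximum; hence given $\gamma\in S$ one finds $\gamma'\in S$ with $\gamma'<\gamma$, giving $(\gamma',\beta)\subseteq S$, and given $\delta\in S^c$ one finds $\delta'\in S^c$ with $\delta'>\delta$, giving $(\alpha,\delta')\subseteq S^c$; thus $S$ is clopen, proving (2). For (3), if $S$ is nonprincipal with $\inf S=\gamma$, then $\gamma\notin S$ and $\gamma=\max S^c$. The previous argument still shows $S$ is open, since $S$ has no minimum. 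However, every basic open interval containing $\gamma$ has upper endpoint $\beta>\gamma$ and therefore meets $S$ (because $\gamma=\inf S$ and $S$ has no smallest element), so $\gamma$ has no open neighborhood inside $S^c$, i.e., $S^c$ is not open and $S$ is not closed.

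Part (4) is then an immediate consequence: the three alternatives ``principal'', ``nonprincipal without infimum'', and ``nonprincipal with infimum'' are exhaustive and mutually exclusive, and parts (1)--(3) show that $S$ is closed precisely in the first two cases. The main subtlety I anticipate is the equivalence in part (1) between the pointwise existence of an immediate predecessor of $\gamma$ and the global property that every element of $\Gamma$ has an immediate successor; this step relies on the translation structure of an ordered abelian group rather than being a feature of arbitrary ordered sets. The handling of potential ``boundary'' points of $\Gamma$, where not every interval $(\alpha,\beta)$ around a given element is available, is likewise transparent in the ordered group setting since no largest or smallest element is present.
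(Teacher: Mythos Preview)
The paper does not supply a proof of this lemma; it explicitly leaves it to the reader. Your argument is correct and proceeds along the natural lines one would expect.

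Your caveat about part (1) is well-founded and worth emphasizing. The lemma sits in Section~\ref{sectprelfs}, where $\Gamma$ is an arbitrary ordered set, yet the equivalence ``$\gamma$ has an immediate predecessor'' $\Leftrightarrow$ ``$\Gamma$ is discretely ordered'' genuinely fails in that generality. For instance, in $\Gamma=\Z\cup(\omega+\N)$ every element has an immediate successor, so $\Gamma$ is discretely ordered in the paper's sense, but $S=\omega^-$ is not open since $\omega$ has no immediate predecessor. So part (1) as stated is only correct under the additional homogeneity afforded by the group structure, exactly as you note. Your remark about endpoints is similarly apt: with the paper's basis of bounded open intervals, a least or greatest element of $\Gamma$ would cause trouble, and this too evaporates in the ordered abelian group setting where the lemma is actually applied.
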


Given a final segment $S$ in $\Gamma$, we set 
\[
\widehat S\>:=\>\left\{
\begin{array}{ll}
S\cup\{\gamma\} & \mbox{if $S$ has infimum $\gamma$ in $\Gamma$, and} \\
S & \mbox{otherwise.}
\end{array}\right.
\]
Then $\widehat S$ is the closure of $S$ in $\Gamma$. We will also write 
$S\,\widehat{\ }\,$ for $\widehat S$ in case the expression $S$ is too long to accommodate a hat.

\parm
The function
\begin{equation}                  \label{Semb}
\Gamma\ni \gamma\>\mapsto\> \gamma^-\in \Gamma\fs 
\end{equation}
is an embedding of $(\Gamma,\leq)$ in $(\Gamma\fs,\subseteq)$. Also sending $\gamma$ 
to $\gamma^+$ produces such an embedding, but throughout this paper we will work with the
embedding (\ref{Semb}).

\mn
%
%
%
\subsection{Final segments in ordered abelian groups}          \label{sectfsoag}
\mbox{ }\sn
From now on, we will work with ordered abelian groups $(G,\leq)$ and their final segments. 
The definitions and results of the previous section remain valid for $\Gamma=G$. We note
that $(G,\leq)$ is discretely ordered if and only if it contains a smallest positive
element, and it is densely ordered otherwise.

If $M$, $M_1$ and $M_2$ are nonempty subsets of $G$, then we set $M_1+M_2=\{\alpha+
\beta\mid \alpha\in M_1\,,\,\beta\in M_2\}$ and $M_1-M_2=\{\alpha-\beta\mid \alpha\in 
M_1\,,\,\beta\in M_2\}$. From this, the sets $\alpha+M$, $\alpha-M$ and $M-\alpha$ are 
obtained via replacing $\alpha$ by $\{\alpha\}$. We set $-M:=0-M$.

If $S_1$ and $S_2$ are final segments, then so is $S_1+S_2\,$. With this addition, 
$G\uparrow$ is a commutative monoid with neutral element $0^-$. It is in general not a group 
and can even contain itempotents other than $0^-$; see part 1) of Lemma~\ref{densdiscr} and 
part 3) of Lemma~\ref{+0^+}.
For example, the lexicographically 
ordered group $G=\Q\times\Z$ has the proper nontrivial subgroup $H=\{0\}\times\Z$, which
gives rise to two idempotent final segments. Indeed, since $H+H=H$, both
\[
\{\alpha\in G\mid \forall \beta\in H: \alpha> \beta\})\quad\mbox{ and }\quad\{\alpha\in G
\mid \exists \beta\in H: \alpha\geq \beta\})
\]
are idempotent, and it can be shown that they are the only
idempotent final segments in $G$.

\parm
For $\Gamma=G$ the embedding (\ref{Semb}) is a homomorphism, that is,
\begin{equation}               \label{sumpr}
(\gamma_1+\gamma_2)^-\>=\> \gamma_1^-+\gamma_2^-\>. 
\end{equation}
We note that 
\begin{equation}                      \label{s^-+0^+}
\gamma^-\,+\,0^+\>=\>\gamma^+\>. 
\end{equation}
Further, the reader may prove:
\begin{lemma}                              \label{densdiscr}
1) We have that $G$ is densely ordered if and only if $0^+ +0^+=0^+$.
\sn
2) Assume that $G$ is discretely ordered. 
If the final segment $S$ of $G$ has an infimum, then this is the smallest element of $S$
and $S$ is principal.             \qed
\end{lemma}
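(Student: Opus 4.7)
The plan is to prove the two parts independently, using in each case only the defining property of $0^+$ or of an infimum together with the appropriate density/discreteness assumption.

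For part 1), I would establish $0^+ + 0^+ = 0^+$ by two inclusions. The inclusion $0^+ + 0^+ \subseteq 0^+$ is immediate: the sum of two strictly positive elements of $G$ is strictly positive. For the reverse inclusion, I would fix an arbitrary $\gamma \in 0^+$, i.e.\ $\gamma>0$, and invoke the density of $(G,\leq)$ to produce some $\delta \in G$ with $0 < \delta < \gamma$. Then both $\delta$ and $\gamma-\delta$ lie in $0^+$, and the decomposition $\gamma = \delta + (\gamma-\delta)$ shows $\gamma \in 0^+ + 0^+$. No real obstacle arises here.

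For part 2), let $\gamma$ denote the infimum of $S$ in $G$. My plan is to show $\gamma \in S$; once this is known, $\gamma$ is the smallest element of $S$ and hence $S = \gamma^-$ is principal by definition. Since $G$ is discretely ordered, it possesses a smallest positive element; denote by $\gamma'$ the immediate successor of $\gamma$ (namely $\gamma$ plus this smallest positive element). I would argue by contradiction: if $\gamma \notin S$, then every $s \in S$ satisfies $s > \gamma$, hence $s \geq \gamma'$ by the immediate-successor property. But this makes $\gamma'$ a lower bound of $S$ strictly larger than $\gamma$, contradicting $\gamma = \inf S$. Therefore $\gamma \in S$, which completes the argument.

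The only subtle point I anticipate is the correct invocation of discreteness in part 2): one must realise that ``discretely ordered'' in an ordered abelian group means \emph{every} element has an immediate successor (obtained by translation from the smallest positive element), not merely that some elements do. Once this is in hand, both parts are short and purely order-theoretic.
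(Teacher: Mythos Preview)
Your proof is correct; the paper itself leaves this lemma to the reader (``the reader may prove''), and your argument is exactly the standard one the authors have in mind, using density to split a positive element in part~1) and the smallest positive element to force $\inf S\in S$ in part~2).
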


\pars
%
\begin{proposition}                                  \label{sumfs}
Take final segments $S_1$ and $S_2$ of $G$.
\sn
1) The sum of two final segments is again a final segment. 
The sum of two principal final segments is again a principal final segment. The sum of two 
final segments of which at least one is nonprincipal is a nonprincipal final segment. 
\sn
2) If $S_2$ is principal but $S_1$ is not, then equation (\ref{eqT}) has no solution.
\sn
3) If $S_1$ has infimum $\alpha_1$ and $S_2$ has infimum $\alpha_2\,$, then $S_1+S_2$ has
infimum $\alpha_1+\alpha_2$.
\sn
4) If $S$ is a final segment of $G$ and $\alpha\in G$, then
\begin{equation}            \label{a+a^-+}
\alpha+S\>=\>\alpha^-+S\>.
\end{equation}
\end{proposition}
\begin{proof}
The proof of parts 1) and 4) are straightforward. Part 2) follows immediately from part 1).
\sn
3): If both $S_1$ and $S_2$ are principal, then the assertion follows from (\ref{sumpr}). 
If at least one of them is nonprincipal, then by part 2) of Lemma~\ref{densdiscr}, $G$ must 
be densely ordered. Now we have to consider the following two cases (up to symmetry). If
$S_1=\alpha_1^-$ and $S_2=\alpha_2^+$, then by (\ref{s^-+0^+}) and (\ref{sumpr}),
\[
S_1\,+\,S_2\>=\> \alpha_1^- \,+\, \alpha_2^+\>=\> \alpha_1^- \,+\, \alpha_2^- \,+\, 0^+
\>=\> (\alpha_1+\alpha_2)^- \,+\, 0^+ \>=\> (\alpha_1+\alpha_2)^+\>.
\]
If $S_1=\alpha_1^+$ and $S_2=\alpha_2^+$, then by (\ref{s^-+0^+}) and (\ref{sumpr})
together with part 1) of Lemma~\ref{densdiscr},
\[
S_1\,+\,S_2\>=\> \alpha_1^+ \,+\, \alpha_2^+\>=\> \alpha_1^-\,+\, 0^+ \,+\, \alpha_2^-
\,+\, 0^+ \>=\> (\alpha_1+\alpha_2)^- \,+\, 0^+ \>=\> (\alpha_1+\alpha_2)^+\>.
\]
\end{proof}

Via the embedding (\ref{Semb}) we can view $(G,\leq)$ as an ordered subgroup of the ordered
monoid $(G\fs, \subseteq)$, consisting of all principal final segments in $G\fs$. 
From this, we obtain the following positive answers to questions (QFS1) and (QFS2) in the 
case of principal final segments. The proofs are straightforward.
\begin{proposition}                            \label{S_1pr}
Take elements $\alpha$, $\alpha_1^-$ and $\alpha_2^-$ in $G$.
\sn
1) The function
\begin{equation}                      \label{+alpha}
G\fs\ni T\>\mapsto\> \alpha^- +T \in G\fs
\end{equation}
is a bijection with inverse $G\fs\ni T\mapsto (-\alpha)^- +T\in G\fs$.
\sn
2) If $S$ is any final segment of $G$, then the unique final segment $T$ of $G$ such that
$\alpha^-+T=S$ is $T=S+(-\alpha)^-=S-\alpha$.
%
Consequently, if $S_1$ and $S_2$ are final segments of $G$ and $S_1$ is principal, say
$S_1=\alpha^-$, then equation (\ref{eqT}) has the unique solution
\begin{equation}
T\>=\> S_2\,\ms\, S_1\>=\>S_2-\alpha\>.
\end{equation}
\end{proposition} \qed

Part 1) of this proposition shows that the function
\begin{equation}                  \label{T+}
G\fs\ni T\>\mapsto\>S+T\in G\fs
\end{equation}
is injective if $S\in G\fs$ is principal. Part 3) of the following Lemma shows that if
$S$ is nonprincipal, then the function (\ref{T+}) is never injective, provided that $G$ 
is nontrivial.
\begin{lemma}                              \label{+0^+}
Take final segments $S$ and $T$ of $G$.
\sn
1) We have that $0^+ +T=\{\beta\in T\mid \alpha<\beta \mbox{ for some } \alpha\in T\}$.
\sn
2) If there is $\alpha\in G$ such that $T=\alpha^-$, then $0^+ +T=\alpha^+$.
\sn
3) If $T$ is nonprincipal, then $0^+ +T=T$.
\sn
4) If $S$ is nonprincipal, then $S+T=S+\widehat T$.
\end{lemma}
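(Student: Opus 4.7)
The plan is to establish the four parts in sequence, with each later part leveraging the earlier ones; the only genuinely delicate case is part~4.

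For part~1, I would just unfold $0^+=\{\delta\in G\mid\delta>0\}$ and check both inclusions. If $\delta>0$ and $\beta\in S$, then $\delta+\beta>\beta\in S$, so $\delta+\beta\in S$ and witnesses itself as strictly above the element $\beta\in S$, yielding $\subseteq$. Conversely, if $\beta\in S$ and $\alpha\in S$ satisfy $\alpha<\beta$, then $\beta-\alpha>0$ and the identity $\beta=(\beta-\alpha)+\alpha$ exhibits $\beta\in 0^++S$.

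Parts~2 and~3 then fall out of part~1. In part~2, the principal segment $\gamma^-$ has unique smallest element $\gamma$, so the set described in part~1 drops only $\gamma$ and equals $\gamma^+$. In part~3, a nonprincipal final segment has no smallest element, so every $\beta\in S$ admits some $\alpha\in S$ with $\alpha<\beta$, and part~1 returns $S$ itself.

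For part~4, the inclusion $S_1+T\subseteq S_1+\widehat T$ is trivial since $T\subseteq\widehat T$. For the reverse, the only nontrivial case is $\widehat T=T\cup\{\gamma\}$ with $\gamma=\inf T\notin T$, and I would first observe that this forces $T=\gamma^+$: every element of $T$ is $\geq\gamma$ but distinct from $\gamma$, while any $\beta>\gamma$ fails to be a lower bound of $T$ and hence lies above some element of $T$, so belongs to $T$ by the final-segment property. It then suffices to show $\alpha+\gamma\in S_1+T$ for every $\alpha\in S_1$. Using that $S_1$ is nonprincipal, I pick $\alpha'\in S_1$ with $\alpha'<\alpha$; then $\gamma+(\alpha-\alpha')>\gamma$ lies in $T=\gamma^+$, and
\[
\alpha+\gamma\>=\>\alpha'+\bigl(\gamma+(\alpha-\alpha')\bigr)\>\in\>S_1+T\,.
\]
The main obstacle is the conceptual point in part~4: recognising that $T\neq\widehat T$ forces $T=\gamma^+$, and then exploiting the nonprincipality of $S_1$ as ``room to slide'' so that the adjoined infimum $\gamma$ is already realised as a sum in $S_1+T$. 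Everything else reduces to routine bookkeeping with the definitions of $0^+$ and of final segments.
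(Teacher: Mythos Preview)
Your proof is correct. Parts 1--3 match the paper's approach exactly (the paper simply calls them straightforward and sketches part~3 as you do). For part~4 there is a small stylistic difference: the paper argues algebraically via parts~2 and~3, writing
\[
S_1+\widehat T\>=\>(S_1+0^+)+\widehat T\>=\>S_1+(0^++\gamma^-)\>=\>S_1+\gamma^+\>=\>S_1+T
\]
using $S_1=S_1+0^+$ from part~3 and $0^++\gamma^-=\gamma^+$ from part~2. Your direct element-wise verification is exactly what this chain unpacks to; the paper's version is a bit more economical in that it genuinely reuses the earlier parts rather than re-deriving the ``room to slide'' idea by hand, but both are the same argument at heart.
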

\begin{proof}
The proof of statements 1) and 2) is straightforward. If $T$ is nonprincipal, then for
every $\beta\in T$ there is some $\alpha\in T$ such that $\alpha<\beta$.
Therefore, part 3) follows from part 1).
\sn
4): Since $S$ is nonprincipal, part 3) yields that $S=S+0^+$.
If $T$ is closed in $G$, then $\widehat T=T$ and there is nothing to show.
If $T$ is not closed in $G$, then by part 4) of Lemma~\ref{cl-open}, $T$
has an infimum $\alpha\in G\setminus T$. Thus, $\widehat T=\alpha^-$ and $0^+ +\widehat
T=\alpha^+=T$, so that $S+\widehat T=S+0^+ +\widehat T=S+T$.
\end{proof}

\begin{corollary}
For every $\alpha\in G$ and nonprincipal final segment $T$ of $G$,
\begin{equation}                     \label{+alpha^-^+}
\alpha+T\>=\>\alpha^- +T\>=\>\alpha^+ +T\>.
\end{equation}
Hence the function
\begin{equation}                      \label{+alpha^+}
T\>\mapsto\> \alpha^+ +T
\end{equation}
is a bijection on the set of all nonprincipal final segments of $G$, and there it is equal
to the function (\ref{+alpha}).
\end{corollary}
\begin{proof}
The first equality of (\ref{+alpha^-^+}) follows from equation (\ref{a+a^-+}). The second
equality holds since $\alpha^- +T=\alpha^- +(0^+ +T)=(\alpha^- +0^+) +T)= \alpha^+ +T$ by
part 3) of Lemma~\ref{+0^+} and equation (\ref{s^-+0^+}). The final statement follows
from the second equality of (\ref{+alpha^-^+}) together with part 1) of
Proposition~\ref{S_1pr} and part 1) of Proposition~\ref{sumfs}.
\end{proof}

\pars
For the conclusion of this section, we note the following fact; its proof is 
straightforward. If $M\subseteq G$ and $\alpha\in G$, then 
\begin{equation}                          \label{0^-+M}
\alpha^-\,+\,M\>=\>(\alpha+M)^- \>.
\end{equation}

\mn
%
%
%
\subsection{Subtraction of final segments}       
\mbox{ }\sn
The notion of subtraction of Dedekind cuts has been defined in articles in the
literature, such as \cite{FM} and \cite{KKF}. When upper cut set addition is used, then 
for two Dedekind cuts 
$\Lambda_1=(\Lambda_1^L,\Lambda_1^R)$ and $\Lambda_2=(\Lambda_2^L,\Lambda_2^R)$, their
difference $\Lambda_2-\Lambda_1$ has been defined to be the cut with upper cut set
$\Lambda_2^R-\Lambda_1^L$, which is the sum of the final
segments $\Lambda_2^R$ and $-\Lambda_1^L$ and thus again a final segment.

Analogously, we can define a difference of two final segments $S_2$ and $S_1$
of $G$ as
\begin{equation}                    \label{classdiff}
S_2 \,-\, S_1^c\>=\> S_2\,+\, (-S_1^c)\>=\> S_2\,+\, \{\gamma\in G\mid\gamma>-S_1\}\>.
\end{equation}
Note that $(-S)^c=-(S^c)$ since $\gamma\in (-S)^c\Leftrightarrow \gamma>-S \Leftrightarrow
-\gamma<S \Leftrightarrow -\gamma\in S^c\Leftrightarrow \gamma\in -(S^c)$. Therefore we
will omit the brackets and just write ``$\,-S^c\,$''.

Definition {\ref{classdiff}) has for instance been used in \cite{FM,KKF}. However, it does
not extend the difference operation of the embedded copy of $G$. Indeed, for every 
$\alpha\in G$ we have
\[
\alpha^-\,-\,(\alpha^-)^c \>=\>\alpha^-\,+\,\{\gamma\in G\mid \gamma>-\alpha^-\}\>=\>
\alpha^-\,+\,\{\gamma\in G\mid \gamma>-\alpha\}\>=\> 0^+\>.
\]
In order to obtain a difference operation that extends the one of $G$, replacing
``$>$'' by ``$\geq$'' in (\ref{classdiff}) we define the
\bfind{(modified) final segment subtraction} of two final segments $S_2$ and $S_1$ as
\[
S_2 \,\mfs\, S_1\>:=\> S_2\,+\,\{\gamma\in G\mid \gamma\geq -S_1\}\>.
\]
In other words, we are replacing the complement $S_1^c$ in (\ref{classdiff}) by the
``modified complement'' $\{\gamma'\in G\mid \gamma'\leq S_1\}$.
This subtraction extends the subtraction of $G$: for arbitrary $\alpha,\beta\in G$,
\begin{equation}                         \label{diffG}
\alpha^-\,\mfs\,\beta^- \,=\, \alpha^-\,+\,\{\gamma\mid \gamma\geq -\beta^-\}
\,=\, \alpha^-\,+\,\{\gamma\mid \gamma\geq -\beta\}\,=\,(\alpha-\beta)^-\,.
\end{equation}

To simplify notation, for a final segment $S$ of $G$ we set
\[
\Delt S\>:=\> \{\gamma\in G\mid \gamma\geq -S\}\>=\> \widehat{-S^c}\>.
\]

%
\begin{lemma}                               \label{Delt S}
1) For each $\alpha\in G$,
\begin{equation}                \label{Deltpr}
\Delt \alpha^-\>=\>(-\alpha)^-\>.
\end{equation}
2) If $S$ is a nonprincipal final segment of $G$, then $\Delt S=-S^c$.
\sn
3) For every final segment $S$ of $G$, we have $\Delt (\Delt S)=\widehat S$.
\end{lemma}
\begin{proof}
1): We have $\Delt \alpha^-=\{\gamma\in G\mid \gamma\geq -\alpha^-\}=\{\gamma\in G\mid
\gamma\geq-\alpha\}=(-\alpha)^-$.
\sn
2): If $S$ is a nonprincipal final segment of $G$, then $\gamma\geq
-S\Leftrightarrow -\gamma\leq S\Leftrightarrow -\gamma<S\Leftrightarrow -\gamma\in S^c
\Leftrightarrow \gamma\in -S^c$, so $\Delt S=-S^c$.
\sn
3): If $S=\alpha^-$ for some $\alpha\in G$, then $\Delt (\Delt S)=\Delt((-\alpha)^-)=
\alpha^-=S=\widehat{S}$, where we have used part 1) twice.
If $S$ is a nonprincipal final segment of $G$, then $\Delt (\Delt S)=\Delt(-S^c)=
(-(-S^c)^c)\,\widehat{\ }\,=(-(-S))\,\widehat{\ }\,=\widehat{S}$, where we have used part
2) and the definition of $\Delt$.
\end{proof}

\begin{remark}
{\rm The final segment $\alpha^+$ is nonprincipal if and only if $G$ is densely ordered; 
if $G$ has smallest positive element $\gamma$, then $\alpha^+=(\alpha+\gamma)^-$. 
Therefore, we have
\[
\Delt \alpha^+\>=\>\left\{\begin{array}{ll}
(-\alpha)^- & \mbox{if $G$ is densely ordered,}\\    
(-\alpha-\gamma)^- & \mbox{if $G$ is discretely ordered.}
\end{array}\right. 
\]
}
\end{remark}

\begin{lemma}                             \label{+-}
Take final segments $S_1\,$ and $S_2$ of $G$.
\sn
1) We have that
\[
S_2 \,\mfs\, S_1
\>=\> S_2\,+\, \Delt S_1\>.
\]
In particular, $0^- \,\mfs\, S_1=\Delt S_1\,$.
\sn
2) The sets $\Delt S_1$, $S_2 \mfs S_1\,$, $-S_1^c$ and $S_2-S_1^c$ are again final
segments of $G$.
\sn
3) If $S_1$ is nonprincipal, then
\begin{equation}                    \label{+-eq2}
S_2 \,\mfs\, S_1\>=\>S_2\,-\,S_1^c\>.
\end{equation}
4) If $S_1$ is principal, say $S_1=\alpha^-$ for some $\alpha\in G$, then
\[
S_2 \,\mfs\, S_1\>=\>S_2\,+\,(-\alpha)^-\>=\>S_2-\alpha\>.
\]
\sn
5) The following holds:
\begin{equation}                                \label{eq10}
S_1\,+\,(S_2\,\mfs\, S_1)\>=\> (S_1 \,\mfs\, S_1)\,+\,S_2\>.
\end{equation}
\sn
6) The following holds:
\[
S_2 \,\mfs\, (0^-\,\mfs\, S_1)\>=\> S_2\,+\, \widehat{S_1}\>.
\]
In particular,
\begin{equation}                   \label{0^-0^-S}
0^- \,\mfs\, (0^-\,\mfs\, S_1)\>=\> \widehat{S_1}\>.
\end{equation}
\end{lemma}
\begin{proof}
1): This follows from the definitions of $S_2\mfs S_1$ and $\Delt S_1\,$.
\sn
2): For $\Delt S_1$ this holds by definition, so for $S_2 \mfs S_1$ it follows from part
1). For $-S_1^c$ we observe that $S_1^c$ is an initial segment, hence $-S_1^c$ is a final
segment and the same holds for $S_2-S_1^c=S_2+(-S_1^c)$.
\sn
3): This follows from part~1) together with part 2) of Lemma~\ref{Delt S}.
\sn
4): This follows from part~1) together with equation (\ref{Deltpr}) and part 4) of Proposition~\ref{sumfs}.
\sn
5): The proof is straightforward, using part 1).
\sn
6): Using parts 1) and part 3) of Lemma~\ref{Delt S}, we compute:
\[
S_2 \,\mfs\, (0^-\,\mfs\, S_1)\>=\>S_2 \,\mfs\, \Delt S_1 \>=\> S_2 \,+\, \Delt
(\Delt S_1) \>=\> S_2 \,+\, \widehat S_1\>.
\]
\end{proof}

\begin{remark}
\rm The final segment $S_2 \mfs S_1$ is closed, except in the following cases. If 
$S_1=\alpha^-$ and $S_2=\beta^+$ for some $\alpha,\beta\in G$ and $\beta$ has no
immediate successor in $G$, then $\beta-\alpha$ has no immediate successor in $G$ and
therefore, $S_2 \mfs S_1=(\beta-\alpha)^+$ is not closed. If $S_2$
and $S_1$ have no infimum in $G$, then $S_2 \mfs S_1$ may or may not have an infimum 
in $G$. For instance, if $G=\Q$ and $S_1=S_2=\{\alpha\in\Q\mid \alpha>\pi\}$, then 
$\Delt S_1=\{\alpha\in\Q\mid \alpha>-\pi\}$ and $S_2 \mfs S_1=0^+$ which is not closed
although $S_1$ and $S_2$ are.
\end{remark}

\pars
The question arises whether the difference operation $\mfs$ provides some positive answer 
to our questions (QFS1) and (QFS2). As part 2) of Proposition~\ref{S_1pr} in conjunction
with part 4) of Lemma~\ref{+-} shows, it indeed gives a complete answer to these questions
in the case where $S_1$ is principal. It now remains to deal with the case where $S_1$ is
nonprincipal. This case is much more complex and we need some preparation.

\mn
%
%
%
\subsection{Invariance groups}          \label{sectig}
\mbox{ }\sn
Throughout, we let $G$ be an arbitrary ordered abelian group. For a subset $M$ of $G$, we 
define its \bfind{invariance group} to be
\[
\cG(M)\>:=\>\{\gamma\in G\mid M+\gamma=M\}\>. 
\]

\begin{lemma}                               \label{lemGS}
Take any nonempty subset $M\subseteq G$.
\sn
1) \ We have that $\cG(M)$ is a subgroup of $G$.
\sn
2) \ We have that
\begin{equation}                            \label{GS=+-}
\cG(M)\>=\>\{\gamma\in G\mid M+\gamma\subseteq M \mbox{\rm\ and } M-\gamma\subseteq M\}\>.
\end{equation}
3) \ The following holds:
\[
\cG(M)\>=\>\cG(-M)\>=\>\cG(M^c)\>. 
\]
4) \ We have $0\in M$ if and only if $\cG(M)\subseteq M$.
\sn
5) \ If $M$ is convex, then $\cG(M)$ is a convex subgroup of $G$.
\sn
6) \ If $M$ has an infimum in $G$, then $\cG(M)=\{0\}$.
In particular, for every $\alpha\in G$,
\begin{equation}                         \label{G(+-)}
\cG(\alpha^-) \>=\>\cG(\alpha^+)\>=\>\cG(\{\alpha\})\>=\>\{0\}\>.
\end{equation}
\end{lemma}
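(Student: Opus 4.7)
The plan is to go through the six parts in order, since each later part uses the earlier ones, and essentially all of them reduce to direct manipulations of the defining condition $M+\gamma=M$.

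For part 1, I would check the three subgroup axioms: $0\in\cG(M)$ is immediate, closure under addition follows from $M+(\gamma_1+\gamma_2)=(M+\gamma_1)+\gamma_2$, and closure under inverses follows by adding $-\gamma$ to both sides of $M+\gamma=M$. For part 2, the inclusion $\subseteq$ is trivial; for $\supseteq$, observe that $M-\gamma\subseteq M$ rewrites as $M\subseteq M+\gamma$, which combined with $M+\gamma\subseteq M$ gives equality. For part 3, I would note that $M+\gamma=M$ implies $-M-\gamma=-M$, hence $-\gamma\in\cG(-M)$, and then invoke part 1 to conclude $\gamma\in\cG(-M)$; for the complement identity, the key calculation is $(M+\gamma)^c=M^c+\gamma$, which is verified elementwise.

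Part 4 is a one-liner: if $0\in M$ and $\gamma\in\cG(M)$, then $\gamma=0+\gamma\in M+\gamma=M$. For part 5, given $\gamma_1,\gamma_2\in\cG(M)$ with $\gamma_1\leq\delta\leq\gamma_2$, I would use the characterization from part 2: for any $m\in M$, the chain $m+\gamma_1\leq m+\delta\leq m+\gamma_2$ lies between two elements of $M$, so convexity of $M$ gives $m+\delta\in M$; symmetrically $m-\delta\in M$, and then part 2 yields $\delta\in\cG(M)$. Combined with part 1, this shows $\cG(M)$ is a convex subgroup.

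The most substantive step is part 6. The idea is that translation by any $\gamma\in\cG(M)$ must preserve the infimum: if $\mu=\inf M$ exists in $G$, I would verify that $\inf(M+\gamma)=\mu+\gamma$ (the lower bound check is immediate, and any lower bound $\lambda$ of $M+\gamma$ yields the lower bound $\lambda-\gamma$ of $M$, hence $\lambda-\gamma\leq\mu$). Since $M+\gamma=M$, we get $\mu+\gamma=\mu$, forcing $\gamma=0$. The particular assertion in \eqref{G(+-)} then follows by observing that $\alpha^-$, $\{\alpha\}$, and $\alpha^+$ all have an infimum in $G$ (equal to $\alpha$ when $G$ is densely ordered, or the minimum of the set when $G$ is discretely ordered). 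I do not anticipate a real obstacle; the only point requiring any care is the infimum computation in part 6, and the clean formulation of part 2 which is the workhorse for part 5.
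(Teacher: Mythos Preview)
Your proposal is correct and largely mirrors the paper's proof; parts 1, 2, 4, and 5 are essentially identical. There are two minor variations worth noting. For $\cG(M)=\cG(M^c)$ in part 3, you use the bijection identity $(M+\gamma)^c=M^c+\gamma$ directly, whereas the paper argues by contrapositive (if $\gamma\notin\cG(M)$, exhibit a witness showing $\gamma\notin\cG(M^c)$ via part 2). For part 6, you invoke the fact that translation by $\gamma$ shifts the infimum by $\gamma$ and then use uniqueness of the infimum, while the paper instead picks, for a given positive $\gamma$, an element $\beta\in M$ with $\beta-\inf M<\gamma$ and observes $\beta-\gamma<\inf M$, so $\beta-\gamma\notin M$. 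Both of your alternatives are clean and arguably more transparent than the paper's versions, but neither constitutes a genuinely different route.
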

\begin{proof}
1): \ Take $\alpha,\beta\in \cG(M)$. Then $M+\alpha+\beta=M+\beta=M$, whence $\alpha+\beta
\in \cG(M)$. Further, $M-\alpha=M+\alpha-\alpha=M$, whence $-\alpha\in \cG(M)$. This proves
that $\cG(M)$ is a group. 
\sn
2): \ The inclusion ``$\subseteq$'' in (\ref{GS=+-}) follows from the definition of the
invariance group and part 1) of our lemma. If $M+\gamma\subseteq M$ and $M-\gamma\subseteq 
M$, then $M+\gamma\subseteq M=M-\gamma+\gamma\subseteq M+\gamma$, whence $M+\gamma=M$. 
This proves the inclusion ``$\supseteq$'' in (\ref{GS=+-}).
\sn
3): \ The first equality holds since $\gamma\in \cG(M)\Leftrightarrow -\gamma\in \cG(M)
\Leftrightarrow M-\gamma=M\Leftrightarrow -M+\gamma=-M\Leftrightarrow \gamma\in \cG(-M)$, 
where we used that $\cG(M)$ is a group. 

To prove the second equality, we observe that if $\gamma\notin\cG(M)$, then there exists 
some $\alpha\in M$ such that $\alpha+\gamma\notin M$, whence $\alpha+\gamma\in M^c$. Then 
$\alpha+\gamma-\gamma=\alpha\notin M^c$, so $\gamma\notin
\cG(M^c)$ by (\ref{GS=+-}). This shows that $\cG(M^c)\subseteq\cG(M)$. The reverse inclusion
follows by substituting $M^c$ for $M$ in our argument.
\sn
4): \ If $0\in M$, then $\cG(M)=0+\cG(M)\subseteq M+\cG(M)=M$. The converse is trivial.
\sn
5): \ By part 1), it suffices to show that $\cG(M)$ is convex. Take $\alpha,\beta\in 
\cG(M)$ and $\gamma\in G$ with $\alpha\leq \gamma \leq \beta$. Then for all $\delta\in 
M$, $\delta+\alpha\leq \delta+\gamma\leq \delta+\beta$ and $\delta+\alpha,\delta+\beta\in 
M$. Hence $\delta+\gamma\in M$ by convexity of $M$. This proves that $M+\gamma\subseteq M$. 
In the same way, we also obtain that $M-\gamma\subseteq M$ because $-\alpha,-\beta\in 
\cG(M)$ (since $\cG(M)$ is a group) and $-\beta\leq -\gamma \leq -\alpha$. By part 2) it
follows that $\gamma\in \cG(M)$.
\sn
6): \ Assume that $\alpha\in G$ is the infimum of $M$ in $G$. Then for every positive 
$\gamma\in G$ there is $\beta\in M$ such that $0\leq \beta-\alpha<\gamma$. Hence $\beta
-\gamma<\alpha$ so that $\beta-\gamma\notin M$ and thus
$\gamma\notin \cG(M)$ by (\ref{GS=+-}). This proves that $\cG(M)=\{0\}$. 
\end{proof}

Observe that in the linear ordering given by inclusion on the set of all convex subgroups,
\[
H_1\cup H_2\>=\>H_1+H_2\>=\>\max\{H_1,H_2\}
\]
for every two convex subgroups $H_1$ and $H_2$ of $G$.
\begin{lemma}                               \label{GS+S}
1) \ If $M_1$, $M_2$ are arbitrary subsets of $G$, then
\[
\cG(M_1)\cup \cG(M_2)\subseteq \cG(M_1+M_2)\>.
\]
\sn
2) \ If both $S_1$, $S_2$ are initial segments or final segments of $G$, then
\[
\cG(S_1+S_2)\>=\>\cG(S_1)+\cG(S_2)\>=\> \cG(S_1)\cup \cG(S_2)\>.
\]
3) \ If $\alpha\in G$ and $S$ is a final segment of $G$, then
\[
\cG(\alpha+S)\>=\>\cG(S)\>.
\]
\sn
4) \ If $S$ is a final segment and $H$ a convex subgroup of $G$, then $\cG(H)=H$ and
\[
\cG(S+H)\>=\>\cG(S)\cup H\>. 
\]
\sn
5) \ For each $\alpha\in G$ and every convex subgroup $H$ of $G$,
\[
\cG(\alpha+H)\>=\>\cG((\alpha+H)^-)\>=\>\cG((\alpha+H)^+)\>=\>H\>.
\]
\sn
6) \ For each $\alpha\in G$ and every final segment $S$ of $G$, we have 
\[
\alpha+S\subseteq S\>\Leftrightarrow\> \alpha\in\cG(S)^- \;\mbox{ and }\; \alpha+S\subsetneq 
S\Leftrightarrow \alpha\in\cG(S)^+\>.
\]
\end{lemma}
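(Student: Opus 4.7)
The plan is to prove the six parts in order, with Part 2 carrying the essential new work; Parts 4 and 5 will follow by packaging the convex subgroup $H$ into a final segment and reducing to Part 2.

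Part 1 is immediate: if $\gamma\in\cG(M_1)$, then $(M_1+M_2)+\gamma=(M_1+\gamma)+M_2=M_1+M_2$, and the argument for $\cG(M_2)$ is symmetric. Part 3 is equally direct, since translation by $\alpha$ commutes with translation by $\gamma$, forcing $\cG(\alpha+S)=\cG(S)$; the identity $\alpha+S=\alpha^-+S$ is just equation (\ref{0^-+M}).

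For Part 2, the task is the reverse inclusion $\cG(S_1+S_2)\subseteq\cG(S_1)\cup\cG(S_2)$. Since convex subgroups of $G$ are linearly ordered by inclusion, I may assume $\cG(S_1)\subseteq\cG(S_2)$, reducing the claim to showing that every $\gamma\in\cG(S_1+S_2)$ lies in $\cG(S_2)$. Replacing $\gamma$ by $-\gamma$ if necessary, I take $\gamma>0$, and suppose for contradiction that $\gamma\notin\cG(S_2)$. By convexity of $\cG(S_2)$ this forces $\gamma>\cG(S_2)\supseteq\cG(S_1)$, so $\gamma\notin\cG(S_1)$ either. By (\ref{GS=+-}), for each $i$ there exists $\alpha_i\in S_i$ with $\alpha_i-\gamma\notin S_i$, hence $\alpha_i-\gamma<S_i$. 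The key step is now to exploit that $\cG(S_1+S_2)$ is a group, so $2\gamma\in\cG(S_1+S_2)$ and therefore $\alpha_1+\alpha_2-2\gamma\in S_1+S_2$. Any decomposition $\alpha_1+\alpha_2-2\gamma=\sigma_1+\sigma_2$ with $\sigma_i\in S_i$ forces $\sigma_i>\alpha_i-\gamma$, hence $\alpha_i-\sigma_i<\gamma$. Summing these strict inequalities gives $(\alpha_1-\sigma_1)+(\alpha_2-\sigma_2)<2\gamma$, while the left side equals $(\alpha_1+\alpha_2)-(\sigma_1+\sigma_2)=2\gamma$, a contradiction.

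For Part 4, the identity $\cG(H)=H$ follows from the group axioms. To obtain $\cG(S+H)=\cG(S)\cup H$, I first show $S+H=S+H^-$, where $H^-=H\cup H^+$. The inclusion $\subseteq$ is trivial, and any $s+h$ with $s\in S$ and $h\in H^+$ satisfies $s+h>s$, so lies in $S\subseteq S+H$. A direct check gives $\cG(H^-)=H$, and applying Part 2 to the final segments $S$ and $H^-$ yields $\cG(S+H)=\cG(S+H^-)=\cG(S)\cup H$. Part 5 is handled analogously: $(\alpha+H)^-=\alpha+H^-$ and $(\alpha+H)^+=\alpha+H^+$, so Part 3 together with $\cG(H)=\cG(H^-)=\cG(H^+)=H$ gives all three invariance groups equal to $H$. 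Finally, Part 6 is a direct manipulation: for $\alpha\geq 0$, $\alpha+S\subseteq S$ holds automatically, while for $\alpha<0$ the inclusion combined with the automatic $S+(-\alpha)\subseteq S$ forces $-\alpha\in\cG(S)$ via (\ref{GS=+-}); in either case this is equivalent to $\alpha\geq k$ for some $k\in\cG(S)$, i.e., $\alpha\in\cG(S)^-$. The strict version follows since $\alpha+S=S$ is exactly $\alpha\in\cG(S)$, so $\alpha+S\subsetneq S$ iff $\alpha\in\cG(S)^-\setminus\cG(S)=\cG(S)^+$.

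The main obstacle is the $2\gamma$ argument in Part 2: all other parts reduce either to routine unwindings of the definitions or to Part 2 via the auxiliary identity $S+H=S+H^-$.
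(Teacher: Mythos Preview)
Your proof is correct. The core $2\gamma$ argument in Part~2 is the same as the paper's (you phrase it as a contradiction via a decomposition $\sigma_1+\sigma_2$, the paper as a contrapositive showing $\alpha_1+\alpha_2-2\gamma<S_1+S_2$; the content is identical). For Parts~4 and~5 you take a slightly different route: you reduce to Part~2 via the identity $S+H=S+H^-$ and the direct computations $\cG(H^\pm)=H$, whereas the paper repeats an ad hoc $2\gamma$ argument for $\cG(S+H)$ and uses a complement trick $\cG((\alpha+H)^+)=\cG((-\alpha+H)^-)$ for part of Part~5. Your packaging is a bit more economical, but the difference is organizational rather than substantive.
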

\begin{proof}
1): \ Take $\gamma_1\in \cG(M_1)$ and $\gamma_2\in \cG(M_2)$. Then for all $\alpha_1\in 
M_1$ and $\alpha_2\in M_2\,$, $(\alpha_1+\alpha_2)+\gamma_1=(\alpha_1+\gamma_1)+\alpha_2\in
M_1+M_2$ and $(\alpha_1+\alpha_2)+\gamma_2=\alpha_1+(\alpha_2+\gamma_2)\in M_1+M_2$. Hence,
$\gamma_1,\gamma_2\in \cG(M_1+M_2)$.
\sn
2): \ In view of part 1), it suffices to show that $\cG(S_1+S_2)
\subseteq \cG(S_1)\cup \cG(S_2)$. Let $S_1$ and $S_2$ be final
segments of $G$; the proof for initial segments is similar. Take a
positive $\gamma\in G$ such that $\gamma\notin \cG(S_1)\cup \cG(S_2)$. Then there are 
$\alpha_1\in S_1$ and $\alpha_2\in S_2$ such that $\alpha_1-\gamma\notin S_1$
and $\alpha_2-\gamma\notin S_2\,$. Since $S_1$ and $S_2$ are final segments of $G$, this
means that $\alpha_1-\gamma<S_1$ and $\alpha_2-\gamma<S_2\,$. Thus $\alpha_1+\alpha_2
-2\gamma<S_1+S_2\,$, which means that $2\gamma\notin \cG(S_1+S_2)$ and hence $\gamma\notin 
\cG(S_1+S_2)$.
\sn
3): \ 
We compute: $\cG(\alpha+S)=
\cG(\alpha^-+S)=\cG(\alpha^-)\cup\cG(S)=\{0\}\cup\cG(S)=\cG(S)$, where we have used part 
2) of our lemma together with equations (\ref{a+a^-+}) and (\ref{G(+-)})
\sn
4): \ If $H'$ is any convex subgroup of $G$, then it is comparable to $H$ by inclusion. 
Hence $H+H'=H$ holds if and only if $H'\subseteq H$. This shows that $\cG(H)=H$. Thus 
we obtain from part 1) that $\cG(S)\cup H=\cG(S)\cup\cG(H)\subseteq \cG(S+H)$. Take a
positive $\gamma\in G$ such that $\gamma\notin \cG(S)\cup H$. Then there are 
$\alpha_1\in S$ and $\alpha_2\in H$
such that $\alpha_1-\gamma< S$ and $\alpha_2-\gamma<H$. Thus $\alpha_1+\alpha_2-2\gamma
<S+H$, which means that $2\gamma\notin \cG(S+H)$ and hence $\gamma\notin \cG(S+H)$.
\sn
5): \ We have $\cG(\alpha+H)=\{\gamma\in G\mid \gamma+\alpha+H=\alpha+H\}=\{\gamma\in G\mid
\gamma+H=H\}=H$. By equation (\ref{0^-+M}), $(\alpha+H)^-=\alpha^-+H$. Hence by part 4) and
equation (\ref{G(+-)}) of Lemma~\ref{lemGS}, $\cG((\alpha+H)^-)=\cG(\alpha^-+H)=
\cG(\alpha^-)+H=H$. We observe 
that the complement of $-(\alpha+H)^+$ in $G$ is $(-\alpha+H)^-$. Applying part 3) of 
Lemma~\ref{lemGS} twice, we obtain that $\cG((\alpha+H)^+)=\cG(-(\alpha+H)^+)=
\cG((-\alpha+H)^-)=H$.
\sn
6): \ Since $\cG(S)$ is a convex subgroup of $G$, $\cG(S)^-$ is the disjoint union of
$\cG(S)$ and $\cG(S)^+$. If $\alpha
\in \cG(S)$, then $\alpha+S=S$. If $\alpha\in \cG(S)^+$, then $\alpha>0$ and therefore, 
$\alpha+S\subseteq S$; but as $\alpha\notin \cG(S)$, $\alpha+S\ne S$. This proves the
implications ``$\Leftarrow$'' in both statements.

If $\alpha\notin\cG(S)^-$, then $\alpha+S\ne S$ since $\alpha\notin\cG(S)$, hence 
$S\subsetneq \alpha+S$ since $\alpha<0$. If $\alpha\notin\cG(S)^+$, then $\alpha\in\cG(S)$ 
and $\alpha+S=S$, or $\alpha<0$ and $S\subseteq \alpha+S$. This proves the implications 
``$\Rightarrow$'' in both statements.
\end{proof}

\pars
For a final segment $S$, we define 
\[
nS\>:=\>\{\alpha_1+\ldots+\alpha_n\mid \alpha_i\in S\}\>.
\]
Note that the set $\{n\alpha\mid \alpha\in S\}$ is a final segment if and only if $G$ is
$n$-divisible; in general, $nS$ is the smallest final segment that contains 
$\{n\alpha\mid \alpha\in S\}$. By a repeated application of part 2) of the last lemma, we 
obtain:
\begin{corollary}                  \label{cGnS}
Take a final segment $S$ of $G$. Then
\[
\cG(nS)\>=\>\cG(S)\>. 
\]
\end{corollary}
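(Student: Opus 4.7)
The plan is to proceed by a straightforward induction on $n$, exactly as the corollary's statement suggests. The key observation is that, by definition, $nS = S + (n-1)S$, where each summand is a final segment of $G$ (by part 1) of Proposition~\ref{sumfs}, $(n-1)S$ is again a final segment since it is a sum of final segments).

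For the base case $n=1$ the equality $\cG(1 \cdot S) = \cG(S)$ is trivial. For the inductive step, assuming $\cG((n-1)S) = \cG(S)$, I would apply part 2) of Lemma~\ref{GS+S} to the pair of final segments $S$ and $(n-1)S$ to obtain
\[
\cG(nS) \;=\; \cG(S + (n-1)S) \;=\; \cG(S) \cup \cG((n-1)S) \;=\; \cG(S) \cup \cG(S) \;=\; \cG(S),
\]
which closes the induction.

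There is essentially no obstacle here; the only thing to check is that the inductive hypothesis applies, i.e., that $(n-1)S$ is indeed a final segment so that part 2) of Lemma~\ref{GS+S} is available, and this is guaranteed by part 1) of Proposition~\ref{sumfs}. So the proof reduces to citing these two earlier results and performing a one-line induction.
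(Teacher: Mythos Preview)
Your proof is correct and follows exactly the paper's approach: the paper simply states that the corollary is obtained ``by a repeated application of part 2) of the last lemma'' (Lemma~\ref{GS+S}), which is precisely the induction you have spelled out. Your additional remark that $(n-1)S$ is a final segment (needed to invoke part~2) of Lemma~\ref{GS+S}) is a valid and minor justification that the paper leaves implicit.
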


From part 2) of Lemma~\ref{GS+S} and the fact that the convex subgroups of $G$ are
linearly ordered by inclusion, we obtain:
\begin{proposition}                    \label{nosol}
Take final segments $S_1\,$, $S_2$ and $T$ such that $S_1+T=S_2$. Then
$\cG(S_1)\subseteq\cG(S_1)\cup\cG(T)=\cG(S_2)$, and if $\cG(S_1)\subsetneq\cG(S_2)$,
then $\cG(T)=\cG(S_2)$.
\end{proposition}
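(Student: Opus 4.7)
The plan is to read both assertions off the additivity properties of the invariance group established in Lemma~\ref{GS+S}, together with the structural fact that invariance groups of convex sets are convex subgroups and thus linearly ordered by inclusion.

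First I would apply part 1) of Lemma~\ref{GS+S} to the equality $S_1+T=S_2$, giving
\[
\cG(S_1)\,\cup\,\cG(T)\>\subseteq\>\cG(S_1+T)\>=\>\cG(S_2)\>.
\]
The inclusion $\cG(S_1)\subseteq\cG(S_2)$ is immediate, settling the first assertion.

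For the second assertion I would upgrade this to an equality using part 2) of Lemma~\ref{GS+S}, which applies because $S_1$ and $T$ are both final segments of $G$; it yields
\[
\cG(S_2)\>=\>\cG(S_1+T)\>=\>\cG(S_1)\,\cup\,\cG(T)\>.
\]
Now every final segment is convex, so by part 5) of Lemma~\ref{lemGS}, both $\cG(S_1)$ and $\cG(T)$ are convex subgroups of $G$. Convex subgroups are linearly ordered by inclusion, so their union is simply the larger of the two: $\cG(S_1)\cup\cG(T)=\max\{\cG(S_1),\cG(T)\}$. Under the hypothesis $\cG(S_1)\subsetneq\cG(S_2)$, this maximum cannot equal $\cG(S_1)$, so it must equal $\cG(T)$, giving $\cG(T)=\cG(S_2)$ as required.

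The argument is essentially a two-line deduction from the lemma, so there is no real obstacle to overcome; the only mildly nonobvious ingredient is the observation that the union of two convex subgroups collapses to their maximum, which is what turns the inclusion from part 1) into the equality from part 2) into the sharp dichotomy in the statement.
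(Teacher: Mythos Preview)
Your proof is correct and follows the same approach as the paper, which simply records the proposition as a consequence of Lemma~\ref{GS+S}. In fact you are more careful than the paper: the paper attributes the whole proposition to part~1) of Lemma~\ref{GS+S}, whereas you correctly observe that part~1) only yields the inclusion $\cG(S_1)\cup\cG(T)\subseteq\cG(S_2)$, and that part~2) together with the linear ordering of convex subgroups is what forces $\cG(T)=\cG(S_2)$ in the strict case.
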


\begin{lemma}                      \label{cGcomp}
Take a final segment $S$ of $G$.
\sn
1) The following holds:
\[
\cG(-S)\>=\>\cG(S)\>=\>\cG(S^c)\>=\>\cG(\Delt S)\>. 
\]
2) If $\cG(S)\ne\{0\}$ then $S$ is nonprincipal.
\end{lemma}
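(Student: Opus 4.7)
The plan is to handle the three statements in order, reducing each to facts already established in Lemmas~\ref{lemGS}, \ref{GS+S}, \ref{Delt S} and~\ref{fs^c}; no new combinatorial input is needed, only a careful case split on whether the relevant final segment is principal.

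For part 1), the equalities $\cG(-S)=\cG(S)=\cG(S^c)$ are immediate from part 3) of Lemma~\ref{lemGS}. For $\cG(\Delt S)=\cG(S)$ I would split on whether $S$ is principal: if $S=\alpha^-$, then by (\ref{Deltpr}) we have $\Delt S=(-\alpha)^-$, and both $\cG(S)$ and $\cG(\Delt S)$ equal $\{0\}$ by (\ref{G(+-)}); if $S$ is nonprincipal, Lemma~\ref{Delt S} gives $\Delt S=-S^c$, so $\cG(\Delt S)=\cG(-S^c)$, which reduces to $\cG(S^c)=\cG(S)$ by the equalities already proved.

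For part 2), I would rewrite $S_2\mfs S_1$ using part 1) of Lemma~\ref{fs^c} as $S_2+\Delt S_1$. Both summands are final segments (part 4) of Lemma~\ref{fs^c}), so part 2) of Lemma~\ref{GS+S} applies and yields $\cG(S_2+\Delt S_1)=\cG(S_2)\cup\cG(\Delt S_1)=\cG(S_2)\cup\cG(S_1)$, the last equality being part 1) of the present lemma. The remark preceding Lemma~\ref{GS+S}, that for convex subgroups $H_1\cup H_2=H_1+H_2$, then converts the union into the sum required by the statement.

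Part 3) I would prove by contrapositive: if $S$ is principal, then $S=\alpha^-$ has smallest element $\alpha$, which is in particular an infimum in $G$, so part 6) of Lemma~\ref{lemGS} (or equivalently (\ref{G(+-)})) gives $\cG(S)=\{0\}$. I do not foresee a genuine obstacle anywhere; the invariance-group calculus has already been set up so that each part becomes essentially a one-line consequence. The only point requiring care is the case split in part 1) between principal and nonprincipal $S$, since $\Delt S$ is described by different formulas in Lemma~\ref{Delt S} in those two cases, and the collapse to the known equalities happens via different routes.
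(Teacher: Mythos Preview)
Your proposal is correct and follows essentially the same approach as the paper: part~1) is handled via Lemma~\ref{lemGS}(3) plus the principal/nonprincipal case split using Lemma~\ref{Delt S}, part~2) via Lemma~\ref{fs^c}(1),(4) and Lemma~\ref{GS+S}(2), and part~3) via~(\ref{G(+-)}). The only cosmetic difference is that for part~3) the paper gives a short direct argument (a negative $\alpha\in\cG(S)$ shows $S$ has no least element) rather than invoking Lemma~\ref{lemGS}(6) by contrapositive, but the content is the same.
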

\begin{proof}
1): The first and second equality are special cases of part 3) of Lemma~\ref{lemGS}. The 
equality $\cG(S)=\cG(\Delt S)$ is seen as follows. If $S$ is nonprincipal, then by
part 2) of Lemma~\ref{Delt S}, $\Delt S=-S^c$ and the equality follows from what we have
already proved. If $S$ is principal, say $S=\alpha^-$ for some $\alpha\in G$, then
$\Delt S=(-\alpha)^-$ by (\ref{Deltpr}), whence $\cG(\Delt S)=\{0\}=\cG(S)$ by
(\ref{G(+-)}).
\sn
2): If $\cG(S)\ne\{0\}$, then there is a negative element $\alpha\in \cG(S)$ so that for
every $\beta\in S\,$, $\beta>\alpha+\beta\in S\,$; hence $S$ has no smallest element.
\end{proof}

\parm
This lemma enables us to also define the invariance groups of cuts and quasi-cuts.
\begin{proposition}
If $(\Lambda^L,\Lambda^R)$ is a quasi-cut in $G$, then
\[
\cG(\Lambda^L)\>=\> \cG(\Lambda^R)\>.
\]
If $(\Lambda^L,\Lambda^R)$ is not a cut, then these groups are trivial.
\end{proposition}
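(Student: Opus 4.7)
The plan is to split on whether $(\Lambda^L,\Lambda^R)$ is in fact a cut or merely a quasi-cut, handling the second statement of the proposition inside the non-cut case.

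First I would handle the case where $(\Lambda^L,\Lambda^R)$ is a Dedekind cut. Then by definition $\Lambda^L \cap \Lambda^R = \emptyset$ together with $\Lambda^L \cup \Lambda^R = G$, which means $\Lambda^L = (\Lambda^R)^c$. The equality $\cG(\Lambda^L) = \cG(\Lambda^R)$ now falls out immediately from part~3) of Lemma~\ref{lemGS}, which tells us that $\cG(M)=\cG(M^c)$ for any subset $M$ of $G$.

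Next I would treat the case where $(\Lambda^L,\Lambda^R)$ is not a cut. By the discussion following the definition of a quasi-cut, the intersection $\Lambda^L \cap \Lambda^R$ consists of exactly one element $\gamma \in G$. Since $\Lambda^L$ is an initial segment containing $\gamma$ while every element of $\Lambda^R$ is $\geq \gamma$, the element $\gamma$ must be the largest element of $\Lambda^L$, and symmetrically it must be the smallest element of $\Lambda^R$. Hence $\Lambda^R = \gamma^-$ is a principal final segment, and equation (\ref{G(+-)}) of Lemma~\ref{lemGS} immediately gives $\cG(\Lambda^R) = \{0\}$. For $\Lambda^L$, I would observe that $-\Lambda^L$ is then a final segment with smallest element $-\gamma$, so $-\Lambda^L = (-\gamma)^-$; applying part~3) of Lemma~\ref{lemGS} (namely $\cG(M)=\cG(-M)$) together once more with equation (\ref{G(+-)}) yields $\cG(\Lambda^L) = \cG(-\Lambda^L) = \cG((-\gamma)^-) = \{0\}$. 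Thus both invariance groups are trivial, which simultaneously proves the claimed equality and the second assertion of the proposition.

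There is no real obstacle here, since everything reduces to already-proved properties of the invariance group; the only subtle point is the topological/order-theoretic observation that the unique element of $\Lambda^L \cap \Lambda^R$ must serve as a maximum of the initial segment and a minimum of the final segment, which forces both to be principal.
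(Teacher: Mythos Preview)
Your proof is correct and uses essentially the same ingredients as the paper: the complement identity $\cG(M)=\cG(M^c)$ for the cut case, and principality of $\Lambda^R=\gamma^-$ together with equation~(\ref{G(+-)}) for the non-cut case. The only structural difference is that the paper asserts the equivalence $\Lambda^L+\gamma=\Lambda^L \Leftrightarrow \Lambda^R+\gamma=\Lambda^R$ uniformly for all quasi-cuts and then derives triviality in the non-cut case from $\Lambda^R=\gamma^-$ and the already-established equality, whereas you do the case split first and compute $\cG(\Lambda^L)=\{0\}$ directly via $-\Lambda^L=(-\gamma)^-$; your organization is arguably more explicit, but the content is the same.
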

\begin{proof}
The first statement follows from the fact that for every $\gamma\in G$,
\[
\Lambda^L+\gamma\,=\,\Lambda^L\;\Leftrightarrow\;\Lambda^R+\gamma\,=\,\Lambda^R\>.
\]
If $(\Lambda^L,\Lambda^R)$ is not a cut, then $\Lambda^L\cap\Lambda^R=\{\gamma\}$ for some
$\gamma\in G$. It follows that $\Lambda^R=\gamma^-$, hence by part 2) of
Lemma~\ref{cGcomp}, $\cG(\Lambda^L)=\cG(\Lambda^R)=\{0\}$.
\end{proof}
For every quasi-cut $\Lambda=(\Lambda^L,\Lambda^R)$ we can now define
\[
\cG(\Lambda)\>:=\> \cG(\Lambda^L)\>=\> \cG(\Lambda^R)\>.
\]

\parm
In order to give answers to questions (QFS1), (QFS2) and (QFS3) in the case where $\cG(S_1)
=\cG(S_2)=\{0\}$ in Section~\ref{sectpa} below, we need some more preparation.
We set $G^{>0}:=\{\gamma\in G\mid \gamma>0\}=0^+$.
\begin{lemma}                           \label{lemG-cG}
Take a final segment $S$ of $G$. Then $\cG(S)$ is the largest of all 
subgroups $H$ of $G$ satisfying 
$H\cap (S-S^c)= \emptyset$. Further,
\begin{equation}                            \label{G-cG}
S-S^c\>=\>G^{>0}\setminus \cG(S)\>=\>\cG(S)^+\>.
\end{equation}
\end{lemma}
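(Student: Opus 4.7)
The plan is to extract everything from the characterization of $\cG(S)$ given in part 2 of Lemma~\ref{lemGS}, namely that $\cG(S) = \{\gamma\in G \mid S+\gamma\subseteq S \text{ and } S-\gamma\subseteq S\}$, and then use that $\cG(S)$ is a convex subgroup (part 5 of the same lemma, applied to the convex set $S$) to pass from ``not in $\cG(S)$'' to ``above $\cG(S)$''.

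First I would establish the key equivalence underlying everything: for a positive $\gamma\in G$, one has $\gamma\in\cG(S)$ if and only if $\gamma\notin S-S^c$. The inclusion $S+\gamma\subseteq S$ is automatic when $\gamma>0$ since $S$ is a final segment, so by the cited characterization $\gamma\in\cG(S)$ is equivalent to $S-\gamma\subseteq S$, i.e.\ to the nonexistence of $\alpha\in S$ with $\alpha-\gamma\in S^c$, which is precisely the negation of $\gamma\in S-S^c$. Since $S-S^c\subseteq G^{>0}$ trivially (because every element of $S^c$ is strictly below every element of $S$), this yields the first equality $S-S^c=G^{>0}\setminus\cG(S)$.

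Next I would deduce $G^{>0}\setminus\cG(S)=\cG(S)^+$. Since $S$ is convex, part 5 of Lemma~\ref{lemGS} tells us that $\cG(S)$ is a convex subgroup. Any $\gamma\in\cG(S)^+$ is by definition greater than every element of $\cG(S)$, hence positive (as $0\in\cG(S)$) and outside $\cG(S)$. Conversely, for $\gamma\in G^{>0}\setminus\cG(S)$, convexity of $\cG(S)$ forces $\gamma>\cG(S)$: if some $h\in\cG(S)$ satisfied $h\geq\gamma$, then $0<\gamma\leq h$ with $0,h\in\cG(S)$ would force $\gamma\in\cG(S)$ by convexity, a contradiction. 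This gives $\gamma\in\cG(S)^+$, completing equation~(\ref{G-cG}).

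Finally, for the maximality statement, the first equality in (\ref{G-cG}) immediately gives $\cG(S)\cap(S-S^c)=\emptyset$. Conversely, if $H\leq G$ is any subgroup with $H\cap(S-S^c)=\emptyset$, I would argue that $H\subseteq\cG(S)$ by taking an arbitrary $h\in H$ and splitting on the sign: if $h=0$ there is nothing to show; if $h>0$, then $h\notin S-S^c$ by assumption, so $h\in\cG(S)$ by the equivalence of the first paragraph; and if $h<0$, replace $h$ by $-h\in H$ (since $H$ is a group) to reduce to the positive case, then use that $\cG(S)$ is a group to conclude $h\in\cG(S)$. The only subtle point — and the step I expect to require the most care — is ensuring the convexity argument in the second paragraph is sound; once $\cG(S)$ is known to be a convex subgroup, the rest is bookkeeping around the automatic inclusion $S+\gamma\subseteq S$ for $\gamma>0$.
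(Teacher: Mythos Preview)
Your proof is correct and follows essentially the same route as the paper's: both hinge on the observation that for positive $\gamma$, membership in $\cG(S)$ is equivalent to $\gamma\notin S-S^c$, and both use the group property of $\cG(S)$ to handle negative elements. The only cosmetic difference is that you establish equation~(\ref{G-cG}) first and derive the maximality statement from it, whereas the paper proves maximality first and the equation second; you also spell out the step $G^{>0}\setminus\cG(S)=\cG(S)^+$ via convexity, which the paper leaves implicit.
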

\begin{proof}
If $\gamma=\alpha-\beta$ with $\alpha\in S$ and $\beta\in S^c$, then $\alpha-\gamma
=\beta\notin S$, so $-\gamma\notin \cG(S)$. Since $\cG(S)$ is a subgroup of $G$, this
implies that $\gamma\notin\cG(S)$. Conversely, if $H$ is a subgroup of $G$ properly 
containing $\cG(S)$, then there is $\gamma\in H^{>0}\setminus\cG(S)$. 
Then $-\gamma\notin\cG(S)$ and there is $\alpha\in S$ such that $\alpha-\gamma\notin S$.
This implies $\beta=\alpha-\gamma\in S^c$, whence $\gamma=\alpha-\beta\in S-S^c$.
\pars
Now we prove equation (\ref{G-cG}). Since $\alpha>\beta$ whenever $\alpha\in S$ and
$\beta\in S^c$, it follows that $S-S^c\subseteq G^{>0}\setminus\cG(S)$. For the converse,
assume that $\gamma\in G^{>0} \setminus \cG(S)$. Then there is some $\beta\in S^c$ such
that $\beta+\gamma\in S$ and therefore, $\gamma\in S-S^c$.
\end{proof}

\begin{corollary}                          \label{corsols}
Take final segments $S_1$ and $S_2$ of $G$ and assume that $S_1$ is nonprincipal.
\sn
1) We have
\[
S_1 \,\mfs\, S_1\>=\> S_1-S_1^c\>=\> \cG(S_1)^+\>, 
\]
so that
\begin{equation}                       \label{eqcorsols}
S_1\,+\,(S_2-S_1^c)\>=\>S_1\,+\,(S_2\,\mfs\, S_1) \>=\> \cG(S_1)^+ +S_2\>\subseteq S_2\>.
\end{equation}
\sn
2) If $\cG(S_1)=\{0\}$ and $S_2$ is nonprincipal, then
$S_1+(S_2- S_1^c)=S_2\,$.
\sn
3) If $\cG(S_1)\subsetneq \cG(S_2)$, then $S_1+(S_2-S_1^c)=S_2\,$.
\end{corollary}
\begin{proof}
1): This follows from equation (\ref{+-eq2}) in part 3) of Lemma~\ref{+-} together with
Lemma~\ref{lemG-cG}. The equalities in (\ref{eqcorsols}) follow from part 1) together with
parts 3) and 5) of Lemma~\ref{+-}.
The inclusion holds since $\cG(S_1)^+$ only contains positive
elements.
\sn
2): This follows from part 1) together with part 3) of Lemma~\ref{+0^+}
\sn
3): 
Since $\cG(S_1)\subsetneq \cG(S_2)$, there is some $\alpha\in \cG(S_1)^+\cap
\cG(S_2)$. We obtain that $S_2=\alpha+S_2\subseteq
\cG(S_1)^++S_2\,$. In view of (\ref{eqcorsols}), this proves statement 3).
\end{proof}

\parm
Finally, for application in Section~\ref{sectM(I)} we prove the following facts:
\begin{proposition}                        \label{S1S2}      
Take final segments $S$, $S_1$ and $S_2$ of $G$. 
\sn
1) We have that $\cG(S)^+$ does not contain $S$ if and only if $0\in S$.
\sn
2) The set $\{\alpha\in G\mid \alpha+\cG(S)^+\subseteq S\}$ is equal to $S$ if there is no
$\gamma\in G$ such that $\cG(S)^+=\gamma+S$; otherwise, it properly contains $S$.
\sn
3) If $\cG(S_2)\subsetneq \cG(S_1)$, then there is $\alpha\in G$ such that $S_1+S_2=
S_1+\alpha$.
\sn
4) The set of all $\alpha\in G$ such that $\alpha+S\subseteq G^{>0}$ is equal to $-S^c$.
\sn
5) The final segment $\cG(S)^+$ is the union of all $\alpha+S\subseteq G^{>0}$, 
$\alpha\in S$.
\end{proposition}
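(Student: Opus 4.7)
The plan is to treat the five parts in order, relying on the identity $\cG(S)^+ = G^{>0}\setminus\cG(S) = S - S^c$ from Lemma~\ref{lemG-cG} and on the invariance/translation identities in Lemma~\ref{GS+S}. For part 1, the inclusion $\cG(S)^+\subseteq G^{>0}$ makes $\cG(S)^+\not\supseteq S$ automatic when $0\in S$; conversely, if $0\notin S$ then $S\subseteq G^{>0}$, and any $\sigma\in S\cap\cG(S)$ would give $0=\sigma+(-\sigma)\in S+\cG(S)=S$, so $S\cap\cG(S)=\emptyset$ and $S\subseteq G^{>0}\setminus\cG(S)=\cG(S)^+$. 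Part 4 reduces to the chain $\alpha+S\subseteq G^{>0}\Leftrightarrow -\alpha<S\Leftrightarrow -\alpha\in S^c\Leftrightarrow \alpha\in -S^c$.

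For part 2, write $T:=\{\alpha\in G\mid \alpha+\cG(S)^+\subseteq S\}$ and $H:=\cG(S)$. The inclusion $S\subseteq T$ is immediate since $\cG(S)^+\subseteq G^{>0}$ and $S$ is a final segment. If $\cG(S)^+=\gamma+S$ for some $\gamma\in G$, then $-\gamma+\cG(S)^+=S$ gives $-\gamma\in T$, and part 4 applied to $\gamma+S\subseteq G^{>0}$ forces $-\gamma\in S^c$, so $T\supsetneq S$. Conversely, take $\alpha\in T\setminus S$, so $\alpha\in S^c$. Then $U:=-\alpha+S$ is a final segment contained in $G^{>0}$ with $\cG(S)^+\subseteq U$ and $\cG(U)=H$ by Lemma~\ref{GS+S}(3). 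Any element of $U\cap H$ would force $0\in U+H=U$, contradicting $U\subseteq G^{>0}$; so $U\cap H=\emptyset$, and every $\gamma\in U$, being positive and outside the convex subgroup $H\ni 0$, must satisfy $\gamma>H$, i.e., $\gamma\in H^+=\cG(S)^+$. Hence $U=\cG(S)^+=(-\alpha)+S$, as desired.

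Part 3 is the main obstacle. Set $H_i:=\cG(S_i)$; from $H_2\subsetneq H_1$ we have $H_1\not\subseteq\cG(S_2)$, hence $S_2\subsetneq S_2+H_1$. Passing to the ordered quotient $\pi:G\to\bar G:=G/H_1$, let $\bar S_2:=\pi(S_2)$; the decisive step is that $\bar S_2$ has a minimum in $\bar G$. Indeed, if not, then for any $\gamma\in\pi^{-1}(\bar S_2)=S_2+H_1$ one could find $\delta\in S_2$ with $\pi(\delta)<\pi(\gamma)$, hence $\delta<\gamma$ in $G$, and so $\gamma\in S_2$; this would yield $S_2+H_1\subseteq S_2$, contradicting the strict inclusion. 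Let $\bar\alpha:=\min\bar S_2$ and choose $\alpha\in S_2$ with $\pi(\alpha)=\bar\alpha$. The inclusion $S_1+\alpha\subseteq S_1+S_2$ is clear. Conversely, for $\sigma_1\in S_1$ and $\sigma_2\in S_2$ set $\delta:=\sigma_2-\alpha$; minimality of $\pi(\alpha)$ in $\bar S_2$ places $\delta$ in $H_1\cup H_1^+$, so either $\delta\in H_1$ (and $\sigma_1+\delta\in S_1$ by $H_1$-invariance of $S_1$) or $\delta>0$ (and $\sigma_1+\delta\in S_1$ since $S_1$ is a final segment); in either case $\sigma_1+\sigma_2=(\sigma_1+\delta)+\alpha\in S_1+\alpha$.

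For part 5, part 4 identifies the family of final segments $\alpha+S$ contained in $G^{>0}$ with $\{\alpha+S\mid \alpha\in -S^c\}$, whose union is $(-S^c)+S=S-S^c$, and this equals $\cG(S)^+$ by equation~(\ref{G-cG}) of Lemma~\ref{lemG-cG}.
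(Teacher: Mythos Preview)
Your proof is correct and, for parts 1, 4, and 5, essentially identical to the paper's; part 2 is also the same idea, with the cosmetic difference that the paper applies part 1 to the translate $-\alpha+S$ in one line whereas you unroll that application by hand.

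The one genuine difference is part 3. The paper gives a two-line direct argument: pick a positive $\beta\in\cG(S_1)\setminus\cG(S_2)$, take $\alpha\in S_2\setminus(\beta+S_2)$, and observe that $S_1+S_2=S_1+\beta+S_2\subseteq S_1+\alpha^-\subseteq S_1+S_2$. Your route instead passes to the quotient $G/\cG(S_1)$ and shows that $S_2/\cG(S_1)$ is principal (this is exactly the content of Proposition~\ref{invgr/H}(4), which you are re-proving), then lifts the minimum back to $G$. Both arguments are fine; the paper's is shorter and more self-contained, while yours makes the structural reason (principality of $S_2$ modulo $\cG(S_1)$) more visible and connects naturally to the machinery of Section~\ref{sectmod}.
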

\begin{proof}
1): If $\cG(S)^+$ does not contain $S$, then $S\setminus\cG(S)^+\ne\emptyset$, so
there is some $\alpha\in S\cap\cG(S)$. Since $\cG(S)$ is a group, it follows that 
$-\alpha\in\cG(S)$. Therefore $S=-\alpha+S$, whence $0=\alpha-\alpha\in S$. The converse
holds since $0\in\cG(S)^+$ is impossible.
\sn
2): Since $\cG(S)^+$ only contains positive elements, $S$ is always contained in
$\{\alpha\in G\mid \alpha+\cG(S)^+\subseteq S\}$. Assume that there is no $\gamma\in G$
such that $\cG(S)^+=\gamma+S$. Take any $\alpha\in G$ such
that $\alpha+\cG(S)^+\subseteq S$, i.e., $\cG(S)^+\subseteq -\alpha+S$. Then by assumption, 
$\cG(S)^+\subsetneq -\alpha+S$. Since $\cG(-\alpha+S)=\cG(S)$ by part 3) of 
Lemma~\ref{GS+S}, we have $\cG(-\alpha+S)^+=\cG(S)^+$. Then $0\in -\alpha+S$ by part 1) 
of our Proposition, so $\alpha\in S$.

Now assume that $\cG(S)^+=\gamma+S$ for some $\gamma\in G$. Then $-\gamma+\cG(S)^+
\subseteq S$, but $-\gamma\notin S$ since otherwise $0\in\cG(S)^+$, which is impossible.
\sn
3): Take $\beta\in\cG(S_1)\setminus\cG(S_2)$ with $\beta>0$. Then $\beta+S_2\subsetneq
S_2$ and there is $\alpha\in S_2\setminus \beta+S_2\,$, so that $\beta+S_2\subseteq
\alpha^-\subseteq S_2\,$. Now $S_1+S_2=(S_1+\beta)+S_2=S_1+(\beta+S_2)\subseteq S_1
+\alpha^-\subseteq S_1+S_2\,$, whence $S_1+S_2=S_1+\alpha^-=S_1+\alpha$.
\sn
4): This holds since $\alpha+S\subseteq G^{>0}\Leftrightarrow \alpha+S>0\Leftrightarrow 
S>-\alpha\Leftrightarrow -\alpha\in S^c\Leftrightarrow \alpha\in -S^c$.
\sn
5): By part 4), $\bigcup\{\alpha+S\mid \alpha\in G\mbox{ with } \alpha+S\subseteq G^{>0}\}
=\bigcup\{\alpha+S\mid \alpha\in -S^c\}=-S^c+S$. By equation (\ref{G-cG}) of 
Lemma~\ref{lemG-cG}, this is equal to $\cG(S)^+$.
\end{proof}

\mn
%
%
%
\subsection{Partial answers to questions (QFS1), (QFS2) and (QFS3)}   \label{sectpa}
\mbox{ }\sn
We are now in the position to give an answer to questions (QFS1), (QFS2) and (QFS3) in the 
important special case where $\cG(S_1)=\cG(S_2)=\{0\}$. We will use this answer later
together with the techniques which we will develop in Section~\ref{sectmod} to settle the
general case where $\cG(S_1)$ can be any proper convex subgroup of $G$.
\begin{lemma}                        \label{partsol}
Take final segments $S_1$ and $S_2$ of $G$ and assume that $S_1$ is nonprincipal. If
$\cG(S_2)=\{0\}$ and $T=T_0$ is a solution of (\ref{eqT}), then $T=\widehat{T_0}$ 
is the largest solution of (\ref{eqT}). 
%
\end{lemma}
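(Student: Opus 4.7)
My plan is to first verify that $\widehat{T_0}$ itself is a solution of~(\ref{eqT}), and then show it is the largest one via a contradiction argument. The crucial ingredient will be that $\cG(S_2)=\{0\}$ forces $\cG(S_1)=\{0\}$ via Proposition~\ref{nosol}; this, combined with part~6) of Lemma~\ref{GS+S}, will rule out solutions extending beyond $\widehat{T_0}$.

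The first step is immediate: since $S_1$ is nonprincipal, part~4) of Lemma~\ref{+0^+} gives $S_1+\widehat{T_0}=S_1+T_0=S_2$, so $\widehat{T_0}$ solves~(\ref{eqT}). For maximality, I would take any solution $T$ of~(\ref{eqT}). Since $T\cup T_0$ is again a solution (Minkowski sums distribute over unions), I may replace $T$ by $T\cup T_0$ and assume $T_0\subseteq T$, reducing the task to showing $T\subseteq \widehat{T_0}$.

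Suppose, aiming for contradiction, that there is $\alpha\in T\setminus\widehat{T_0}$. Since $\widehat{T_0}$ is a closed final segment and $\alpha$ lies in its complement (an initial segment), $\alpha<T_0$ strictly. Then $\alpha^-\subseteq T$ (as $T$ is a final segment containing $\alpha$), while $T_0\subseteq\alpha^+\subsetneq\alpha^-$, so the chain
\[
S_2 = S_1+T_0 \subseteq S_1+\alpha^- \subseteq S_1+T = S_2
\]
collapses to $S_1+\alpha^-=S_2$. Since $\alpha^-$ is principal, Proposition~\ref{S_1pr}~2) pins down $S_1=S_2-\alpha$, equivalently $S_2=\alpha+S_1$. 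Combining this with $S_1+T_0=S_2$ and translating by $-\alpha$ yields $(T_0-\alpha)+S_1=S_1$, where $T_0-\alpha$ consists entirely of strictly positive elements (because $\alpha<T_0$).

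The main obstacle I anticipate is extracting a contradiction from this final identity. My approach is to rule out any strictly positive lower bound of $T_0-\alpha$ in $G$: if $\beta>0$ were such a lower bound, then $T_0-\alpha\subseteq\beta^-$, whence $(T_0-\alpha)+S_1\subseteq\beta^-+S_1=\beta+S_1$ (the second equality being a short computation using that $S_1$ is a final segment). But $\beta\in\cG(S_1)^+$ since $\cG(S_1)=\{0\}$ and $\beta>0$, so part~6) of Lemma~\ref{GS+S} gives $\beta+S_1\subsetneq S_1$, contradicting $(T_0-\alpha)+S_1=S_1$. Therefore no positive lower bound exists, and since $0$ is a lower bound, it must be the infimum of $T_0-\alpha$ in $G$; equivalently $\alpha=\inf T_0$ in $G$. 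But then $\alpha\in\widehat{T_0}$ by definition of the closure, contradicting the choice of $\alpha$. This completes the argument that $T\subseteq\widehat{T_0}$, so $\widehat{T_0}$ is the largest solution of~(\ref{eqT}).
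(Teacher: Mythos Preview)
Your proof is correct. The setup---showing $\widehat{T_0}$ solves~(\ref{eqT}) via part~4) of Lemma~\ref{+0^+}, then taking $\alpha\in T\setminus\widehat{T_0}$ and collapsing the chain to obtain $S_1+\alpha^-=S_2$---matches the paper exactly. The endgame differs: the paper picks a second element $\tau$ with $\alpha<\tau\le\widehat{T_0}$, observes from the collapsed chain that $S_1+\tau=S_1+\alpha=S_2$, and concludes $\alpha-\tau\in\cG(S_2)=\{0\}$, an immediate contradiction. You instead translate to $(T_0-\alpha)+S_1=S_1$, pass through Proposition~\ref{nosol} to obtain $\cG(S_1)=\{0\}$, and use part~6) of Lemma~\ref{GS+S} to squeeze $\alpha$ into being $\inf T_0$. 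Both routes work; the paper's is a bit shorter since it uses the hypothesis $\cG(S_2)=\{0\}$ directly rather than first transferring it to $S_1$. Your reduction to $T_0\subseteq T$ via unions is harmless but unnecessary, since final segments are totally ordered by inclusion.
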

\begin{proof}
Since $S_1+T_0=S_2\,$, by part 4) of Lemma~\ref{+0^+} also $S_1+\widehat{T_0}=S_2$ 
holds. Suppose that $T'$ is a larger solution, and pick some $\alpha'\in T'\setminus
\widehat{T_0}$. If $\widehat{T_0}$ has a minimum, then denote it by $\alpha$. If
$\widehat{T_0}$ has no minimum, then $\widehat{T_0}=T_0$ has no infimum and $G\setminus 
T_0$ has no supremum, so there is $\alpha\in G$ such that $\alpha'<\alpha<T_0\,$. In both
cases, we have that $\alpha\leq \widehat{T_0}$. Consequently,
\[ 
S_2\>=\> S_1+\widehat T_0\>\subseteq\> S_1+\alpha \>\subseteq\> S_1+\alpha' \>\subseteq\>
S_1+T'\>=\> S_2\>,
\]
so equality holds everywhere. It follows that $S_2+\alpha'-\alpha=S_1+\alpha+\alpha'-\alpha
=S_1+\alpha'=S_2\,$, whence $\alpha'-\alpha\in \cG(S_2)=\{0\}$. Thus $\alpha'=\alpha$, a
contradiction.
\end{proof}

\begin{remark}                              \label{cex}
\rm If $S_1$ is principal, then $\widehat{T_0}$ will not always be a solution, even if 
$\cG(S_2)$ is trivial. As a simple counterexample, assume that $G$ is densely ordered and
consider the equation $0^-+T=0^+$. Then $T=0^+$ is a solution, but $T=0^-$ is not.
\end{remark}

\begin{proposition}                         \label{solG=0}
Take final segments $S_1$ and $S_2$ of $G$. Assume that $\cG(S_1)=\cG(S_2)=\{0\}$ and that 
$S_2$ is nonprincipal if $S_1$ is nonprincipal. Then $T=S_2\ms S_1$ is the largest solution
of (\ref{eqT}), and we have
\begin{equation}
S_2\,\ms\, S_1\>=\>\left\{
\begin{array}{ll}
S_2-\alpha & \mbox{if $S_1=\alpha^-$ for some $\alpha\in G$, } \\
(S_2-S_1^c)\,\widehat{ } & \mbox{if $S_1$ is nonprincipal.}
\end{array}\right.
\end{equation}
\end{proposition}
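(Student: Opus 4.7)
The plan is to reduce to the earlier lemmas by splitting into the two cases of the statement. The principal case is immediate: if $S_1=\alpha^-$, Lemma~\ref{fia1} already asserts that $T=S_2-\alpha$ is the unique solution of $S_1+T=S_2$, and is equal to $S_1\ms S_2$. So nothing remains to prove in that case.

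For the nonprincipal case I would first produce an explicit solution of the \emph{equation} $S_1+T=S_2$, and then upgrade to the largest solution of the corresponding \emph{inequality}. Since $\cG(S_1)=\{0\}$ and $S_2$ is nonprincipal by hypothesis, part~2) of Corollary~\ref{corsols} applies and yields that $T_0:=S_2-S_1^c$ satisfies $S_1+T_0=S_2$. Since $\cG(S_2)=\{0\}$, Lemma~\ref{partsol} then shows that $\widehat{T_0}=(S_2-S_1^c)\,\widehat{\ }\,$ is the largest final segment $T$ with $S_1+T=S_2$.

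It remains to argue that this largest solution of the equation coincides with $S_1\ms S_2$, the largest solution of the inequality $S_1+T\subseteq S_2$. One direction is trivial: $\widehat{T_0}$ solves the equation, hence the inequality, so $\widehat{T_0}\subseteq S_1\ms S_2$. For the reverse, use a sandwich argument: from $S_1+(S_1\ms S_2)\subseteq S_2=S_1+\widehat{T_0}\subseteq S_1+(S_1\ms S_2)$ we get $S_1+(S_1\ms S_2)=S_2$, i.e., $S_1\ms S_2$ is itself a solution of the equation. By maximality of $\widehat{T_0}$ among such solutions we conclude $S_1\ms S_2\subseteq\widehat{T_0}$, giving the desired equality.

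The main obstacle is really bookkeeping rather than a new idea: all of the substance is contained in Corollary~\ref{corsols} (existence of the concrete solution $S_2-S_1^c$) and Lemma~\ref{partsol} (closure under $\widehat{\;\;}$ produces the largest solution). The small extra observation needed here is the sandwich showing that, whenever the equation has a solution, its largest solution already realises the maximum defining $\ms$; this is what lets us combine the two previous results into the single formula of the proposition.
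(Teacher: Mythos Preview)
Your proof is correct and follows essentially the same approach as the paper: Lemma~\ref{fia1} for the principal case, and Corollary~\ref{corsols} part~2) together with Lemma~\ref{partsol} for the nonprincipal case. The only cosmetic difference is that the paper states your sandwich observation up front in compressed form (``if a solution of the equation exists, then $S_1\ms S_2$ is the largest solution of it''), while you spell it out at the end; the content is identical.
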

\begin{proof}
As stated in the introduction, $T=S_2\ms S_1$ is the largest solution of $S_1+T\subseteq
S_2$; hence if a solution of equation (\ref{eqT}) exists, then $T=S_2\ms S_1$ is the
largest solution of it. For $S_1=\alpha^-$, our assertion follows from
Proposition~\ref{S_1pr}. If $S_1$
is nonprincipal, then by assumption, $S_2$ is nonprincipal, and it follows from part 2) of
Corollary~\ref{corsols} that $S_2-S_1^c$ is a solution of (\ref{eqT}). Hence our statement
for this case follows from Lemma~\ref{partsol}.
\end{proof}

\mn
%
%
%
\subsection{Ordered abelian groups modulo convex subgroups}   \label{sectmod}
\mbox{ }\sn
{\it In what follows, $G$ will be an ordered abelian group with arbitrary convex subgroup 
$H$.} The quotient $G/H$ carries a canonical ordering such that for all $\alpha,\beta\in G$, 
\begin{equation}                           \label{presleq}
\alpha\,\leq\,\beta\>\Rightarrow\> \alpha+H\,\leq\,\beta+H\>. 
\end{equation}
%
Note that under this induced ordering on $G/H$, we have $\alpha+H<\beta+H$ if and only 
if it holds for $\alpha+H$ and $\beta+H$ as subsets of $G$ as defined at the beginning 
of Section~\ref{sectprelfs}.

For the image of an element $\gamma\in G$ under the canonical epimorphism 
\[
\varphi_H:\>G\>\rightarrow\> G/H\>, 
\]
i.e., the coset $\gamma+H$, we prefer to write $\gamma/H$ 
since we also have to deal with subsets of the form $\gamma+H$ in $G$. With this notation,
$M/H=\{\gamma/H\mid \gamma\in M\}$ for every subset $M\subseteq G$. Since the epimorphism
preserves $\leq$, we have that $M/H$ is convex if $M$ is convex, and $S/H$ is a final 
segment (or initial segment) of $G/H$ if $S$ is a final segment (or initial segment,
respectively) of $G$. In what follows, we will freely make use of the following fact, 
whose proof is straightforward:
\begin{lemma}                               \label{M+H}
For each subset $M$ of $G$ the preimage $\varphi_H^{-1}(M/H)$ of $M/H$ under $\varphi_H$
is $M+H$. If $M$ is a final segment of $G$, then so is $M+H$.
\end{lemma}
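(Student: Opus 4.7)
The lemma has two independent parts, both of which should fall out quickly from unpacking the definitions; I do not anticipate any real obstacle.

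For the first assertion, that $\varphi_H^{-1}(M/H) = M + H$, the plan is a direct equivalence chain. An element $\gamma \in G$ lies in the preimage precisely when $\gamma/H \in M/H$, i.e.\ when there exists $m \in M$ with $\gamma + H = m + H$, which is to say $\gamma - m \in H$. Rewriting $\gamma = m + (\gamma - m)$ shows this is equivalent to $\gamma \in M + H$. No use of the group structure beyond cosets is needed, and $M$ need not be a final segment here.

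For the second assertion, that $M + H$ is a final segment whenever $M$ is, I would argue directly using the final-segment property of $M$ rather than routing through $G/H$. Suppose $\alpha \in M + H$ and $\gamma \in G$ with $\gamma \geq \alpha$; write $\alpha = m + h$ with $m \in M$ and $h \in H$. Then $\gamma - h \geq m$, so since $m \in M$ and $M$ is a final segment of $G$, we obtain $\gamma - h \in M$. Therefore $\gamma = (\gamma - h) + h \in M + H$, as required. (Alternatively, one could quote that $M/H$ is a final segment of $G/H$ by the remark preceding the lemma, combined with the order-preservation~\eqref{presleq} of $\varphi_H$, to conclude that the preimage $M + H$ is a final segment; but the direct argument is shorter.)

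The main and essentially only point to watch is to use $M$ being a final segment at exactly the right spot, namely after translating $\gamma \geq m + h$ to $\gamma - h \geq m$, so that the final-segment closure of $M$ can be invoked on the single element $m \in M$.
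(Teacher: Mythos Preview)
Your proof is correct. The paper omits the proof entirely, stating only that it is ``straightforward''; your direct argument---the coset equivalence chain for the first part and the translation $\gamma - h \geq m$ for the second---is exactly the kind of routine verification the paper has in mind.
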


\begin{proposition}                 \label{invgr/H}
Take a subset $M\subseteq G$, a final segment $S$ of $G$, and a convex subgroup $H$ of $G$. 
\sn
1) If $H\subseteq \cG (M)$, then 
\[
\cG(M/H)\>=\> \cG(M)/H\>. 
\]
2) If $\cG(S)\subseteq H$, then $\cG(S/H)$ is trivial. 
\sn
3) If $\cG(S)$ is trivial, then also $\cG(S/H)$ is trivial.
\sn
4) If $\cG(S)\subsetneq H$, then $S/H$ is principal.
\sn
5) If $G/\cG(S)$ is discretely ordered, then $S/\cG(S)$ is principal.
\end{proposition}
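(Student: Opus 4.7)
The plan is to handle the five parts in order, reducing each to a short coset computation combined with one earlier lemma. The linchpin throughout is Lemma~\ref{M+H}, which converts statements about $M/H$ in $G/H$ into statements about $M+H$ in $G$.

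For part 1), the inclusion $\cG(M)/H\subseteq\cG(M/H)$ is immediate from pushing $M+\gamma=M$ through $\varphi_H$. For the reverse, I would lift $\delta\in\cG(M/H)$ to some $\gamma\in G$, pull the identity $(M/H)+\gamma/H=M/H$ back through $\varphi_H$ using Lemma~\ref{M+H} to obtain $M+\gamma+H=M+H$, and then use $H\subseteq\cG(M)$ to collapse both sides to $M$, giving $M+\gamma=M$. Part 2) uses the same preimage trick to obtain $\gamma\in\cG(S+H)$; by part 4) of Lemma~\ref{GS+S}, $\cG(S+H)=\cG(S)\cup H$, and the hypothesis $\cG(S)\subseteq H$ forces this union to equal $H$, so $\gamma\in H$ and $\delta=0$. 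Part 3) is then the special case $\cG(S)=\{0\}\subseteq H$ of part 2).

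Part 4) is the first structural assertion, and I would prove it by directly exhibiting a minimum of $S/H$. The proper containment $\cG(S)\subsetneq H$ of convex subgroups yields a positive element $\gamma\in H\setminus\cG(S)$; since any positive element outside a convex subgroup automatically exceeds every element of that subgroup, we have $\gamma\in\cG(S)^+$, and part 6) of Lemma~\ref{GS+S} gives $\gamma+S\subsetneq S$. Picking $\sigma\in S\setminus(\gamma+S)$, we have $\sigma-\gamma\notin S$, which because $S$ is a final segment forces $\sigma-\gamma<\delta$ for all $\delta\in S$. Since $\gamma\in H$, the order-preserving quotient map (\ref{presleq}) translates $\sigma<\delta+\gamma$ into $\sigma/H\leq(\delta+\gamma)/H=\delta/H$, so $\sigma/H$ is the smallest element of $S/H$.

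For part 5), I would apply part 2) with $H=\cG(S)$ to conclude $\cG(S/\cG(S))$ is trivial. In the discretely ordered group $G/\cG(S)$, let $e$ denote the smallest positive element; equation (\ref{G-cG}) of Lemma~\ref{lemG-cG} then says $S/\cG(S)-(S/\cG(S))^c=(G/\cG(S))^{>0}$, so in particular $e=\sigma-\tau$ for some $\sigma\in S/\cG(S)$ and $\tau\in(S/\cG(S))^c$. Any element strictly below $\sigma=\tau+e$ must be $\leq\tau$ by discreteness, hence in $(S/\cG(S))^c$, making $\sigma$ the minimum. The main obstacle I anticipate is the preimage bookkeeping in parts 1) and 2), where I have to be careful that lifting $\delta$ to $\gamma$ and applying $\varphi_H^{-1}$ produces exactly $S+\gamma+H=S+H$; once that is in place, the rest is a controlled extraction of a minimum using Lemma~\ref{GS+S}(6) and Lemma~\ref{lemG-cG}.
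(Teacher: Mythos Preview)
Your proposal is correct. Parts 1)--3) match the paper's argument almost verbatim: both directions of 1) go through the preimage identity $\varphi_H^{-1}(M/H)=M+H$, part 2) reduces to $\cG(S+H)=\cG(S)\cup H=H$ via Lemma~\ref{GS+S}(4), and part 3) is the trivial specialization.

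Parts 4) and 5) are where you diverge slightly. For 4), the paper picks $\alpha\in (S+H)\setminus S$ directly (using $S+H\ne S$) and argues that $\alpha/H$ is the minimum by showing any smaller coset $\beta+H$ lies entirely below $S$; you instead extract a positive $\gamma\in H\setminus\cG(S)$, invoke Lemma~\ref{GS+S}(6) to get $\gamma+S\subsetneq S$, and exhibit $\sigma\in S\setminus(\gamma+S)$ as the minimum. Both are clean; yours makes the dependence on Lemma~\ref{GS+S}(6) explicit, while the paper's is marginally more self-contained. For 5), the paper argues by contradiction: if $S/\cG(S)$ were nonprincipal, adding the smallest positive element would fix it, forcing a nontrivial cyclic subgroup into $\cG(S/\cG(S))$ and contradicting part 2). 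Your direct construction via equation~(\ref{G-cG}) of Lemma~\ref{lemG-cG}---writing the smallest positive element as $\sigma-\tau$ and using discreteness to show $\sigma$ is the minimum---is a nice alternative that avoids the contradiction and ties the result back to the explicit description of $S-S^c$.
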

\begin{proof}
1): Assume that $H\subseteq \cG(M)$. Then $M+H=M$. It follows that $M+H+\gamma+H=M+\gamma$
for every $\gamma\in G$, so
we have $M/H+\gamma/H= M/H$ if and only if $M+\gamma=M$. Hence, $\cG(M/H)=\cG (M)/H$.
\sn
2): Assume that $\cG(S)\subseteq H$.
Take a convex subgroup $H'$ of $G$ containing $H$ such that $\cG(S/H)=H'/H$. That is,
$S/H+H'/H=S/H$. It follows that $S+H+H'+H=S+H$, showing that $H'$ is a subgroup of 
$\cG(S+H)$. By part 4) of Lemma~\ref{GS+S}, $\cG(S+H)=\cG(S)\cup H=H$. We conclude that 
$H'=H$ and thus, $\cG(S/H)=H'/H$ is trivial.
\sn
3): This follows from part 2) since the trivial subgroup is contained in $H$.
\sn
4): If $\cG(S)\subsetneq H$, then we have $S+H\ne S$, so there is some $\alpha\in S+H$ such
that $\alpha<S$ (since $S$ is a final segment). As $\alpha\in S+H$, it follows that 
$\alpha/H\in S/H$. We wish to show that $\alpha/H$ is the smallest element of $S/H$. Take 
any $\beta\in G$ such that $\beta/H<\alpha/H$. This means that $\beta+H<\alpha<S$,
so $S\cap (\beta+H)=\emptyset$ and therefore, $\beta/H\notin S/H$. This proves our claim. 
\sn
5): Take $\gamma\in G$ such that $\gamma/\cG(S)$ is the smallest positive element of 
$G/\cG(S)$ and therefore, $(\gamma/\cG(S))\Z$ is a convex subgroup of $G/\cG(S)$ and we 
have $(\gamma/\cG(S))^-=(0/\cG(S))^+$. Suppose that $S/\cG(S)$ is nonprincipal. Then
$(\gamma/\cG(S))^-+S/\cG(S)=(0/\cG(S))^+ +S/\cG(S)=(0/\cG(S))^- +S/\cG(S)=S/\cG(S)$
by equation (\ref{+alpha^-^+}), which implies
that $(\gamma/\cG(S))\Z\subseteq\cG(S/\cG(S))$, contradicting part 2) of our proposition.
\end{proof}

\begin{lemma}                           \label{S=cG(S)}
Take a final segment $S$ of $G$. Then $S+\cG(S)^+\ne S$ if and only if $S/\cG(S)$ is 
principal.
\end{lemma}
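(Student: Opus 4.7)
Set $H:=\cG(S)$ and let $\pi:G\to G/H$ be the canonical projection. The plan is to compare $S$ and $S+H^+$ after projecting to $G/H$. First I would note that, since $H\subseteq\cG(S)$, we have $S+H=S$, so $S$ is a union of $H$-cosets and $S=\pi^{-1}(S/H)$. Likewise, by part~5) of Lemma~\ref{GS+S}, $\cG(H^+)=H$, so $H^++H=H^+$, and therefore $S+H^+$ is also a union of $H$-cosets, i.e.\ $S+H^+=\pi^{-1}(\pi(S+H^+))$. Moreover, part~6) of Lemma~\ref{GS+S} applied to each $\alpha\in\cG(S)^+=H^+$ gives $\alpha+S\subsetneq S$, so $S+H^+\subseteq S$ in any case.

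The whole statement then reduces to the claim that $\pi(S+H^+)$ equals $S/H$ precisely when $S/H$ is nonprincipal. To prove the inclusion $\pi(S+H^+)\subseteq S/H$, take $\gamma=\sigma+h$ with $\sigma\in S$ and $h\in H^+$; then $\gamma/H\in S/H$ and in fact $\gamma/H>\sigma/H$ in $G/H$. Conversely, I would show that for $\tau\in S$ we have $\tau\in S+H^+$ if and only if there is $\sigma\in S$ with $\sigma/H<\tau/H$: indeed, if such a $\sigma$ exists, then by the ordering of cosets described after (\ref{presleq}) we get $\tau-\sigma>H$, i.e.\ $\tau-\sigma\in H^+$, and hence $\tau=\sigma+(\tau-\sigma)\in S+H^+$; the other direction was already observed.

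With this translation in hand, the two implications are immediate. If $S/H$ is nonprincipal, then for every $\tau\in S$ the coset $\tau/H\in S/H$ is not minimal, so some $\sigma/H\in S/H$ satisfies $\sigma/H<\tau/H$; picking a representative $\sigma\in S$ (possible since $S=\pi^{-1}(S/H)$) yields $\tau\in S+H^+$, and hence $S+H^+=S$. Conversely, if $S/H$ is principal with minimum $\mu/H$, choose $\mu\in S$ with $\mu+H$ equal to that minimum; then no $\sigma\in S$ can satisfy $\sigma/H<\mu/H$, so $\mu\notin S+H^+$, giving $S+H^+\subsetneq S$.

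The only mildly delicate point is the careful handling of the ordering on $G/H$: I need to be sure that $\sigma/H<\tau/H$ in $G/H$ translates to $\tau-\sigma\in H^+$ in $G$, and for this I would simply invoke the convention from the paragraph following (\ref{presleq}) that strict inequality of cosets in $G/H$ means strict inequality of the cosets as subsets of $G$. Everything else is bookkeeping about $H$-invariance and the definitions.
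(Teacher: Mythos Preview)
Your proof is correct and follows essentially the same strategy as the paper's: both pass to the quotient $G/\cG(S)$, use part~5) of Lemma~\ref{GS+S} to see that $\cG(S)^+$ is $\cG(S)$-invariant, and then analyze when the minimum of $S/\cG(S)$ survives. The only difference is packaging: the paper identifies $\cG(S)^+/\cG(S)$ with $(0/\cG(S))^+$ and then invokes parts~2) and~3) of Lemma~\ref{+0^+} in the quotient, whereas you establish the elementwise characterization $\tau\in S+H^+\Leftrightarrow\exists\,\sigma\in S:\sigma/H<\tau/H$ directly and argue from there.
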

\begin{proof}
We have that $\cG(S)^+/\cG(S)=(0/\cG(S))^+$. Hence it follows from (\ref{s^-+0^+}) together 
with part 3) of Lemma~\ref{+0^+} that $S/\cG(S)+\cG(S)^+/\cG(S)\ne S/\cG(S)$ if and only if 
$S/\cG(S)$ is principal. If inequality holds, then we also must have $S+\cG(S)^+\ne S$. Now
assume that equality holds. Then it must also hold for the respective preimages of
$S/\cG(S)$ and $\cG(S)^+/\cG(S)$ under $\varphi_{\cG(S)}\,$. By Lemma~\ref{M+H},
$\varphi_{\cG(S)}^{-1}(S/\cG(S))=S+\cG(S)=S$. By part 5) of Lemma~\ref{GS+S} with
$\alpha=0$, $\cG(\cG(S)^+)=\cG(S)$, which shows that $\varphi_{\cG(S)}^{-1}(\cG(S)^+
/\cG(S))=\cG(S)^+ +\cG(S)=\cG(S)^+$. Therefore, the equality
$S/\cG(S)+\cG(S)^+/\cG(S)= S/\cG(S)$ implies the equality $S+\cG(S)^+ =S$.
\end{proof}

\parm
We note that if $(\Lambda^L,\Lambda^R)$ is a cut in $G$, then $\Lambda^L/H\leq\Lambda^R/H$,
hence 
\begin{equation}                            \label{L/H}
\Lambda/H\>:=\>(\Lambda^L/H,\Lambda^R/H)
\end{equation}
is a quasicut in $G/H$.

\begin{proposition}                               
The induced quasi-cut (\ref{L/H}) in $G/H$ is a cut if and only if $H\subseteq 
\cG(\Lambda)$. The invariance group of the cut $\Lambda/\cG(\Lambda)$ in $G/\cG(\Lambda)$ 
is trivial. 
\end{proposition}
\begin{proof}
The quasi-cut $\Lambda/H$ is a cut in $G/H$ if and only if $\Lambda^L/H
\cap\Lambda^R/H=\emptyset$. This holds if and only if $\Lambda^R +H
\subseteq \Lambda^R$. By definition of $\cG(\Lambda)$, this in turn
holds if and only if $H\subseteq \cG(\Lambda)$.

The last statement follows from the definition of $\cG(\Lambda)$ together with part 2) of
Proposition~\ref{invgr/H}.
\end{proof}

\mn
%
%
%
\subsection{The equation $S_1+T=S_2$ in ordered abelian groups of arbitrary rank}     
\label{sectfseq}
%
%
In this section we will first assume that $S_1\,$, $S_2$ are final segments of the
ordered abelian group $G$ and that the equation (\ref{eqT}) 
has a solution $T=T_0\,$. We determine whether it is unique, and if not, compute the
largest solution. We will use the following crucial fact, whose proof is straightforward:
\begin{lemma}                    \label{presol}
Take final segments $S_1,S_2,T_0$ and a proper convex subgroup $H$ of $G$. If $\ovl T
=\ovl T_0$ is the largest solution of $S_1/H+\ovl T=S_2/H$, then its preimage $T=
\varphi_H^{-1}(\ovl T_0)$ is the largest solution of $S_1+T=S_2\,$. In particular,
if $\ovl T=T_0/H$ is the largest solution of $S_1/H+\ovl T=S_2/H$, then its preimage
$T=T_0+H$ under $\varphi_H$ is the largest solution of $S_1+T=S_2\,$.
\end{lemma}

Take a final segment $T$ of $G$. We define 
\[
T\dhat
\]
to be the preimage of $\widehat{T/\cG(T)}$ under $\varphi_{\cG(T)}\,$. Note that
$T\dhat$ is a final segment of $G$. Observe that $T\dhat=T$ holds if and only if
$T/\cG(T)$ is closed in $G/\cG(T)$. If $\cG(T)$ is trivial, then $T\dhat=\widehat T$.
It follows from part 6) of Lemma~\ref{lemGS} that $T\dhat$ is closed also if $\cG(T)$ is
not trivial. We view $T\dhat$ as the ``deep closure'' of $T$.

\begin{lemma}                          \label{hatdhat}
Take a final segment $T$ of $G$.
\sn
1) The preimage of $-(T/\cG(T))^c$ under $\varphi_{\cG(T)}$ is $-T^c$.
\sn
2) For each $\alpha\in G$, the preimage of $(\alpha/\cG(T))^-$ under $\varphi_{\cG(T)}$ is
$\alpha^-+\cG(T)$.
\sn
3) If there is $\alpha\in G$ such that $\alpha/\cG(T)$ is the infimum of $T/\cG(T)$ in 
$G/\cG(T)$, then 
\[
T\dhat\>=\>\alpha^-+\cG(T)\>;
\]
otherwise, $T\dhat=T$. 
\end{lemma}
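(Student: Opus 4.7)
The plan rests on the observation that for $H=\cG(T)$, the canonical projection $\pi=\varphi_{H}\colon G\to G/H$ is a group homomorphism satisfying $\pi^{-1}(T/H)=T+H=T$; the first equality is Lemma~\ref{M+H}, and the second holds by definition of the invariance group. Since $\pi$ commutes with negation (i.e., $\pi^{-1}(-M)=-\pi^{-1}(M)$ for any $M\subseteq G/H$) and with taking complements, part~1) falls out immediately: $\pi^{-1}((T/H)^c)=G\setminus\pi^{-1}(T/H)=T^c$, hence $\pi^{-1}(-(T/H)^c)=-T^c$.

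For part~2), I would first verify the set-level identity $(\alpha/H)^-=\alpha^-/H$ as subsets of $G/H$. The inclusion $\alpha^-/H\subseteq (\alpha/H)^-$ is immediate from (\ref{presleq}). For the reverse inclusion, take $\gamma/H\geq \alpha/H$ and split into cases: if $\gamma\geq\alpha$, then $\gamma\in\alpha^-$ and $\gamma/H\in\alpha^-/H$; if $\gamma<\alpha$, then $\alpha-\gamma>0$ must lie in $H$ (because otherwise convexity of $H$ would force $\alpha-\gamma>H$, giving $\alpha/H>\gamma/H$, a contradiction), so $\gamma/H=\alpha/H\in\alpha^-/H$. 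Once this identity is in hand, Lemma~\ref{M+H} yields $\pi^{-1}((\alpha/H)^-)=\pi^{-1}(\alpha^-/H)=\alpha^-+H$.

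Part~3) then follows by unfolding the definition $T\dhat=\pi^{-1}(\widehat{T/H})$ and splitting on whether $T/H$ has an infimum in $G/H$. If $\alpha/H$ is such an infimum, then whether it is the minimum of $T/H$ (so $T/H=(\alpha/H)^-$ and $\widehat{T/H}=T/H$) or merely an infimum not in $T/H$ (so $T/H=(\alpha/H)^+$ and $\widehat{T/H}=T/H\cup\{\alpha/H\}$), one finds in both subcases that $\widehat{T/H}=(\alpha/H)^-$; applying part~2) then gives $T\dhat=\alpha^-+H$. Otherwise, $\widehat{T/H}=T/H$ by definition, and the preimage is $T+H=T$.

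The only step that requires genuine argument is the set-level identity $(\alpha/H)^-=\alpha^-/H$ in part~2), which is where convexity of $H$ is essential; the remainder of the proof is bookkeeping with preimages under $\pi$, combined with the standard dichotomy between the ``infimum exists'' and ``infimum does not exist'' cases in the definition of the closure $\widehat{\,\cdot\,}$.
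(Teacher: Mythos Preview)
Your proof is correct and follows essentially the same route as the paper's. The only notable differences are stylistic: in part~1) the paper works with forward images (showing $T^c/\cG(T)=(T/\cG(T))^c$ via disjointness, then invoking Lemma~\ref{M+H}), whereas you work directly with preimages and the general fact that $\pi^{-1}$ commutes with complement and negation; and in part~2) the paper simply asserts the identity $\alpha^-/\cG(T)=(\alpha/\cG(T))^-$ without justification, while you supply the convexity argument that actually establishes the nontrivial inclusion.
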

\begin{proof}
1): By part 1) of Lemma~\ref{cGcomp}, $\cG(-T^c)=\cG(T^c)=\cG(T)$, hence the preimage
of $-T^c/\cG(T)$ under $\varphi_{\cG(T)}$ is $-T^c+\cG(T)=-T^c$. It remains to
show that $-T^c/\cG(T)=-(T/\cG(T))^c$. Since $\cG(T^c)=\cG(T)\,$, the sets $T/\cG(T)$
and $T^c/\cG(T)$ are disjoint. On the other hand, their union is $G/\cG(T)\,$, so
$T^c/\cG(T)=(T/\cG(T))^c$, showing that $-T^c/\cG(T)=-(T/\cG(T))^c$ holds.

\sn
2): This follows from Lemma~\ref{M+H} since $\alpha^-/\cG(T)=(\alpha/\cG(T))^-$.

\sn
3): If there is $\alpha\in G$ such that $\alpha/\cG(T)$ is the infimum of $T/\cG(T)$ in 
$G/\cG(T)$, then $\widehat{T/\cG(T)}=(\alpha/\cG(T))^-$, whose preimage under
$\varphi_{\cG(T)}$ is $\alpha^-+\cG(T)$. Otherwise, $\widehat{T/\cG(T)}=T/\cG(T)$, whose
preimage is $T+\cG(T)=T$.
\end{proof}

\begin{proposition}                               \label{maxsol}
Take final segments $S_1$ and $S_2$ of $G$ and assume that $T=T_0$ is a solution 
of equation (\ref{eqT}). Set $T_1:=T_0+\cG(S_2)$. Then the following statements hold.
\sn
1) If $S_1/\cG(S_2)$ is principal, 
then $T=T_1$ is the largest solution of (\ref{eqT}).
\sn
2) If $S_1/\cG(S_2)$ is nonprincipal, 
then $T=T_1\dhat$ is the largest solution of (\ref{eqT}).
\end{proposition}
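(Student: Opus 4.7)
The natural approach is to reduce the general problem to the case $\cG(S_1)=\cG(S_2)=\{0\}$ already handled in Proposition~\ref{solG=0}, by passing to the quotient $G/H$ with $H:=\cG(S_2)$, and then lift the largest solution back to $G$ via Lemma~\ref{presol}. First I would observe that $H$ is a proper convex subgroup of $G$ (if $\cG(S_2)=G$ then $S_2+\gamma=S_2$ for all $\gamma\in G$, forcing $S_2=G$, contradicting our standing convention on final segments). Applying Proposition~\ref{nosol} to $S_1+T_0=S_2$ gives $\cG(S_1)\subseteq\cG(S_2)=H$, so parts 1) and 2) of Proposition~\ref{invgr/H} yield that both $\cG(S_1/H)$ and $\cG(S_2/H)$ are trivial, and the equation descends to
\[
S_1/H\,+\,T_0/H\>=\>S_2/H
\]
in $G/H$, where the results of Section~\ref{sectpa} are available.

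In Case 1, where $S_1/H$ is principal, part 2) of Proposition~\ref{S_1pr} applied in $G/H$ shows that $T_0/H$ is the unique (hence largest) solution of the descended equation, and by Lemma~\ref{presol} the largest solution of (\ref{eqT}) in $G$ is its preimage $T_0+H=T_1$.

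In Case 2, where $S_1/H$ is nonprincipal, part 2) of Proposition~\ref{sumfs} applied to the descended equation forces $S_2/H$ to be nonprincipal as well (otherwise no solution could exist). Then Lemma~\ref{partsol}, applicable because $\cG(S_2/H)=\{0\}$, gives $\widehat{T_0/H}$ as the largest solution in $G/H$. To apply Lemma~\ref{presol} I would replace $T_0$ by $T_0':=\varphi_H^{-1}(\widehat{T_0/H})$; a direct verification using $\cG(S_2)=H$ shows $S_1+T_0'=S_2$ and $T_0'/H=\widehat{T_0/H}$, so by Lemma~\ref{presol} applied to $T_0'$ in place of $T_0$, $T_0'+H=T_0'$ is the largest solution of (\ref{eqT}) in $G$. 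It remains to identify $T_0'$ with $T_1\dhat$: by part 2) of Lemma~\ref{GS+S} we have $\cG(T_0)\subseteq\cG(S_1)\cup\cG(T_0)=\cG(S_2)=H$, so part 4) of the same lemma gives $\cG(T_1)=\cG(T_0+H)=\cG(T_0)\cup H=H$; hence $T_1/\cG(T_1)=T_1/H=T_0/H$, and $T_1\dhat$ is by definition the preimage under $\varphi_H$ of $\widehat{T_1/\cG(T_1)}=\widehat{T_0/H}$, which is exactly $T_0'$.

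The main bookkeeping obstacle, and the step requiring the most care, is this identification $\cG(T_1)=H$ in Case 2; without it the ``deep closure'' $T_1\dhat$ would be computed modulo a strictly smaller convex subgroup and would no longer coincide with the preimage of $\widehat{T_0/H}$, breaking the lifting argument.
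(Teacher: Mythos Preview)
Your proposal is correct and follows essentially the same route as the paper: pass to $G/\cG(S_2)$, invoke Proposition~\ref{S_1pr} (principal case) or Lemma~\ref{partsol} (nonprincipal case) there, lift via Lemma~\ref{presol}, and in Case~2 use parts 2) and 4) of Lemma~\ref{GS+S} to identify $\cG(T_1)=\cG(S_2)$ so that $T_1\dhat$ is exactly the preimage of $\widehat{T_0/H}$. The paper works with $T_1/\cG(S_2)$ rather than $T_0/H$ (these coincide since $T_1=T_0+H$), and your detour through Proposition~\ref{sumfs} to observe $S_2/H$ nonprincipal is harmless but not needed for Lemma~\ref{partsol}.
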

\begin{proof}
Since $S_1+T_1=S_1+T_0+\cG(S_2)=S_2+\cG(S_2)=S_2\,$, we have $S_1/\cG(S_2)+
T_1/\cG(S_2)=S_2/\cG(S_2)$. 
\sn
1): If $S_1/\cG(S_2)$ is principal, then by part 2) of Proposition~\ref{S_1pr}, $\ovl T=
T_1/\cG(S_2)$ is the unique solution of $S_1/\cG(S_2)+\ovl T=S_2/\cG(S_2)$. Hence by
Lemma~\ref{presol}, $T=T_1+\cG(S_2)=T_1$ is the largest solution of (\ref{eqT}).
\sn
2): We observe that $\cG(S_2/\cG(S_2))$ is trivial by part 2) of Proposition~\ref{invgr/H}.
By assumption, $S_1/\cG(S_2)$ is nonprincipal, so we can apply Lemma~\ref{partsol} to infer
that $\ovl T=\widehat{T_1/\cG(S_2)}$ is the largest solution of $S_1/\cG(S_2)+\ovl T=S_2/
\cG(S_2)$. Therefore, from its definition and Lemma~\ref{presol} we obtain that $T=
T_1\dhat$ is the largest solution of (\ref{eqT}).
\end{proof}

\begin{remark}
\rm Remark~\ref{cex} shows that $T=T_1\dhat$ is not always a solution of (\ref{eqT}). 
\end{remark}

For an arbitrary final segment $S$ of $G$, the following application of the
foregoing proposition gives an interesting characterization of $S\dhat$:
\begin{corollary}                              \label{chardhat}
Take a final segment $S$ of $G$. Then $T=S\dhat$ is the largest solution of the 
equation
\[
S\,+\,T\>=\>S\,+\,S\>. 
\]
\end{corollary}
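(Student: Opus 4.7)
The plan is to apply Proposition~\ref{maxsol} with $S_1:=S$ and $S_2:=S+S$, taking as initial solution the obvious $T_0:=S$, and then verify that in both branches of that proposition the answer is $S\dhat$.

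The first step is to compute the relevant invariance groups and the auxiliary segment $T_1$. Part~2) of Lemma~\ref{GS+S} immediately gives
\[
\cG(S_2)\>=\>\cG(S+S)\>=\>\cG(S)\cup\cG(S)\>=\>\cG(S)\>.
\]
Since by definition $S+\gamma=S$ for every $\gamma\in\cG(S)$, we have $S+\cG(S)=S$, and therefore
\[
T_1\>:=\>T_0+\cG(S_2)\>=\>S+\cG(S)\>=\>S\>,
\]
and in particular $\cG(T_1)=\cG(S)$.

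The second step is to invoke Proposition~\ref{maxsol} and observe that both of its cases produce $S\dhat$. If $S_1/\cG(S_2)=S/\cG(S)$ is nonprincipal, then part~2) of that proposition says the largest solution is $T_1\dhat$; since $T_1=S$ and $\cG(T_1)=\cG(S)$, the preimage of $\widehat{T_1/\cG(T_1)}$ under $\varphi_{\cG(S)}$ is exactly the preimage of $\widehat{S/\cG(S)}$, which is $S\dhat$ by definition. If instead $S/\cG(S)$ is principal, then part~1) of the proposition yields the largest solution $T_1=S$; here I would note that a principal final segment coincides with its closure, so $\widehat{S/\cG(S)}=S/\cG(S)$, and taking the preimage under $\varphi_{\cG(S)}$ (using Lemma~\ref{M+H} and the identity $S+\cG(S)=S$) gives $S\dhat=S=T_1$.

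Overall the argument is a direct unwinding, with Proposition~\ref{maxsol} doing all the heavy lifting. The only point that requires a moment of care, but is not really a genuine obstacle, is checking that the two cases of Proposition~\ref{maxsol} are compatible with the single uniform answer $S\dhat$; this is immediate once one notes that in the principal case $\widehat{S/\cG(S)}=S/\cG(S)$, so that the extra closure operation in the definition of $S\dhat$ collapses to the identity and the two formulas agree.
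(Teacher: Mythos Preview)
Your proof is correct and follows essentially the same approach as the paper: both apply Proposition~\ref{maxsol} with $S_1=S$, $S_2=S+S$, $T_0=S$, compute $\cG(S+S)=\cG(S)$ and $T_1=S$, and then verify that the two branches of the proposition both yield $S\dhat$ (the principal case reducing to $S\dhat=S$ because a principal final segment is closed). The only cosmetic difference is that the paper cites Corollary~\ref{cGnS} for $\cG(S+S)=\cG(S)$ whereas you invoke part~2) of Lemma~\ref{GS+S} directly.
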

\begin{proof}
We apply Proposition~\ref{maxsol} with $T_0=S$. Since $\cG(S+S)=\cG(S)$ by Corollary~\ref{cGnS} and $S+\cG(S)=S$, we obtain $T_1=S$. If $S/\cG(S)$ is nonprincipal,
then our claim follows
from part 2) of Proposition~\ref{maxsol}. Now assume that $S/\cG(S)$ is principal. Then 
by part 1) of Proposition~\ref{maxsol}, $S$ is the largest solution, and we need to show 
that $S=S\dhat$ in this case. As $S/\cG(S)$ is principal, it is closed in $G/\cG(S)$ and 
we indeed have $S\dhat=S$ by our observation before Lemma~\ref{hatdhat}.
\end{proof}

\parb
Now we answer questions (QFS1), (QFS2) and (QFS3) in full generality.

\begin{theorem}                             \label{solv}
Take final segments $S_1$ and $S_2$ of $G$. If condition 
\sn
$(*)$ \ $\cG(S_1)\subseteq \cG(S_2)$, and   
$S_2/\cG(S_2)$ is nonprincipal if $S_1/\cG(S_2)$ is nonprincipal
\sn
holds, then $T=S_2\ms S_1$ is the largest solution of equation (\ref{eqT}), and it is equal 
to: 
\sn
$\bullet$ $S_2-\alpha$ \ if \ $S_1/\cG(S_2)=(\alpha/\cG(S_2))^-$, 
\sn
$\bullet$ $(S_2-S_1^c)\dhat$ \ if \ $S_1/\cG(S_2)$ is nonprincipal.

\pars
If condition $(*)$ does not hold, or equivalently, one of the cases 
\sn
i) $\cG(S_2)\subsetneq\cG(S_1)$ or
\sn
ii) $\cG(S_1)= \cG(S_2)$ and $S_1/\cG(S_2)$ is nonprincipal, but $S_2/\cG(S_2)$
is principal
\sn
holds, then equation (\ref{eqT}) has no solution.
\end{theorem}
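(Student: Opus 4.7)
The plan is to reduce everything to the quotient $G/H$ where $H:=\cG(S_2)$: under $(*)$, $\cG(S_1)\subseteq H$, so by Proposition~\ref{invgr/H} both $S_1/H$ and $S_2/H$ have trivial invariance group in $G/H$, and Proposition~\ref{solG=0} applies there. For the non-existence direction, case i) $\cG(S_2)\subsetneq\cG(S_1)$ is ruled out immediately by Proposition~\ref{nosol}. In case ii), $\cG(S_1)=\cG(S_2)=H$ with $S_1/H$ nonprincipal and $S_2/H$ principal, any hypothetical solution of $S_1+T=S_2$ would descend to a solution of $S_1/H+T/H=S_2/H$, contradicting part 2 of Proposition~\ref{sumfs}.

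For existence under $(*)$, I would split on whether $S_1/H$ is principal. In Subcase A, where $S_1/H=(\alpha/H)^-$, I take $T_0:=S_2-\alpha$. Proposition~\ref{S_1pr} applied in $G/H$ gives $(S_1+T_0)/H=S_2/H$, hence $S_1+T_0+H=S_2$; since $\cG(S_2-\alpha)=\cG(S_2)=H$ by Lemma~\ref{GS+S}, we have $\cG(S_1+T_0)\supseteq H$, so $S_1+T_0+H=S_1+T_0=S_2$. Part 1 of Proposition~\ref{maxsol} then yields the largest solution $T_0+H=S_2-\alpha$. In Subcase B, where $S_1/H$ and $S_2/H$ are both nonprincipal, I take $T_0:=S_2-S_1^c$. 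Since $S_1$ is nonprincipal, Corollary~\ref{corsols}(1) gives $S_1+T_0=\cG(S_1)^++S_2$; the crucial step is $\cG(S_1)^++S_2=S_2$, which I would prove using $H^+\subseteq\cG(S_1)^+$ (since $\cG(S_1)\subseteq H$) together with the observation that nonprincipality of $S_2/H$ allows, for each $\gamma\in S_2$, a choice of $\delta\in S_2$ with $\delta/H<\gamma/H$, so that $\beta:=\gamma-\delta\in H^+$ and $\gamma=\beta+\delta\in H^++S_2$. Part 2 of Proposition~\ref{maxsol} then applies with $T_1:=T_0+H=T_0$ (using $\cG(T_0)=\cG(S_2)\cup\cG(S_1)=H$ by Lemmas~\ref{GS+S} and~\ref{cGcomp}), giving the largest solution $T_1\dhat=(S_2-S_1^c)\dhat$.

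To identify the largest solution with $S_1\ms S_2$, I would argue that once any solution $T$ is known to exist, $T\subseteq S_1\ms S_2$ forces $S_2=S_1+T\subseteq S_1+(S_1\ms S_2)\subseteq S_2$, so $S_1\ms S_2$ itself solves the equation and is therefore maximal. The principal obstacle is the Subcase B step that $\cG(S_1)^++S_2=S_2$ when $\cG(S_1)$ may be strictly smaller than $H$; this is precisely where the hypothesis in $(*)$ that $S_2/H$ be nonprincipal is indispensable. The remainder of the argument is essentially bookkeeping with Proposition~\ref{maxsol} and Lemma~\ref{presol}.
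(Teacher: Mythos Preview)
Your proof is correct and follows essentially the same strategy as the paper: reduce to the quotient $G/\cG(S_2)$ where both invariance groups become trivial, solve there, and pull the solution back. The paper packages this slightly differently, applying Proposition~\ref{solG=0} directly in the quotient and invoking Lemma~\ref{presol} to transport the \emph{largest} solution back to $G$ in one step, whereas you first exhibit an explicit solution $T_0$ in $G$ and then appeal to Proposition~\ref{maxsol} for maximality. Since Proposition~\ref{maxsol} is itself proved via Lemma~\ref{presol}, the two routes are equivalent; yours is a bit more hands-on in Subcase~B, where you verify $\cG(S_1)^+ + S_2 = S_2$ directly from nonprincipality of $S_2/H$.

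One remark on your closing comment: the ``principal obstacle'' you flag---that $\cG(S_1)$ might be strictly smaller than $H=\cG(S_2)$ in Subcase~B---does not actually occur. If $S_1/H$ is nonprincipal then part~4) of Proposition~\ref{invgr/H} forces $\cG(S_1)\not\subsetneq H$, so together with $(*)$ you get $\cG(S_1)=H$ and hence $\cG(S_1)^+=H^+$. The paper uses exactly this observation to compute the preimage of $-(S_1/H)^c$ via Lemma~\ref{hatdhat}. Your argument for $\cG(S_1)^+ + S_2 = S_2$ is correct regardless, but recognising $\cG(S_1)=H$ up front would streamline it.
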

\begin{proof}
To simplify notation, we set $\cG_2:=\cG(S_2)$. Assume that condition $(*)$ holds. From part
2) of Proposition~\ref{invgr/H} we know that $\cG(S_1/\cG_2)=\cG(S_2/\cG_2)=\{0\}$. Hence 
by Proposition~\ref{solG=0}, the largest solution of $S_1/\cG_2 +\ovl T=S_2/\cG_2$ is
$\ovl T=S_2/\cG_2 - \alpha/\cG_2=(S_2-\alpha)/\cG_2$ if $S_1/\cG_2=(\alpha/\cG_2)^-$, and
$\ovl T=(S_2/\cG_2 - (S_1/\cG_2)^c)\,\widehat{ } \ $ if $S_1/\cG_2$ is nonprincipal.

By Lemma~\ref{presol} the preimage $T$ of $\ovl T$ under $\varphi_{\cG_2}$ is the largest
solution of (\ref{eqT}). As stated in the introduction, $S_2\ms
S_1$ is the largest solution of $S_1+T\subseteq S_2$, so it follows that $T=S_2\ms S_1\,$.

If $\ovl T=(S_2- \alpha)/\cG_2\,$, then $T= (S_2-\alpha)+\cG_2=S_2-\alpha$ by
Lemma~\ref{M+H} and part 3) of Lemma~\ref{GS+S}. Now assume that $S_1/\cG_2$ is
nonprincipal and $\ovl T=(S_2/\cG_2 - (S_1/\cG_2)^c)\,\widehat{ } \ $. By part 4) of
Proposition~\ref{invgr/H}, the former implies that $\cG_2\subseteq\cG(S_1)$, whence 
$\cG(S_1)=\cG_2\,$. Therefore, $\varphi_{\cG_2}^{-1}(- (S_1/\cG_2)^c)=-S_1^c$ by part 1)
of Lemma~\ref{hatdhat}. By Lemma~\ref{M+H}, $\varphi_{\cG_2}^{-1}(S_2/\cG_2)=S_2+\cG_2=
S_2\,$, hence $\varphi_{\cG_2}^{-1}(S_2/\cG_2 - (S_1/\cG_2)^c)=S_2-S_1^c$.
Thus the preimage of $\ovl T$ is $T=(S_2-S_1^c)\dhat$.

\parm
Now assume that condition $(*)$ does not hold. Then one of the cases i) or ii) must hold;
observe that if $\cG(S_1)\subsetneq\cG_2$, then by part 4) of Proposition~\ref{invgr/H},
$S_1/\cG_2$ is principal and condition $(*)$ holds.
Note that in both cases, $S_1$ is nonprincipal. In case i), 
Proposition~\ref{nosol} shows that equation (\ref{eqT}) has no solution. Assume that 
case ii) holds and that the final segment $T$ is a solution of equation (\ref{eqT}).
Then $S_1/\cG_2+T/\cG_2=S_2/\cG_2$, but this contradicts part 2) of
Lemma~\ref{sumfs} applied to the group $G/\cG_2\,$. 
\end{proof}

The following theorem addresses question (QFS4) in the case where no solution to  
equation (\ref{eqT}) exists: 
\begin{theorem}                               \label{maxsol3}
Take final segments $S_1$ and $S_2$ of $G$.
\sn
1) Assume that $\cG(S_2)\subsetneq\cG(S_1)$. Then there exists an element $\alpha\in S_2$ 
such that $S'_2:=(\alpha+\cG(S_1))^+\subsetneq S_2$ is the largest final segment contained 
in $S_2$ having invariance group $\cG(S_1)$.
\sn
2) Assume that $\cG(S_1)= \cG(S_2)$ and $S_1/\cG(S_1)$ is nonprincipal, but $S_2/\cG(S_1)$ 
is principal, say $S_2/\cG(S_1)=(\alpha/\cG(S_1))^-$ for some $\alpha\in S_2\,$. Then $S'_2
:=(\alpha+\cG(S_1))^+\subsetneq S_2$ is the largest final segment contained in $S_2$ such
that $S'_2/\cG(S_1)$ is nonprincipal.
\sn
In both cases, condition $(*)$ of Theorem~\ref{solv} is satisfied with $S'_2$ in place of
$S_2$ and $T=S_2'\ms S_1$ is the largest solution of $S_1+T=S_2'$. Further, $S_2'\ms S_1=
S_2\ms S_1$.
\end{theorem}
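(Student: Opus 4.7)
The plan is to set $H:=\cG(S_1)$ throughout and reduce both parts to an application of Theorem~\ref{solv} to the pair $(S_1,S_2')$. First I would identify the coset $\bar\alpha=\alpha/H$: in part 1), Proposition~\ref{invgr/H}~4) gives that $\cG(S_2)\subsetneq H$ forces $S_2/H$ to be principal, with smallest element some $\bar\alpha=\alpha/H$ for which a representative $\alpha\in S_2$ can be chosen; in part 2), $\alpha\in S_2$ is given directly. In either case I take $S_2':=(\alpha+H)^+$. The inclusion $S_2'\subseteq S_2$ is immediate because any $\gamma>\alpha+H$ exceeds $\alpha\in S_2$, while $\alpha\in S_2\setminus S_2'$ gives strictness. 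The identity $\cG(S_2')=H$ is exactly part 5) of Lemma~\ref{GS+S}.

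Next I would establish the stated maximality of $S_2'$. Any final segment $T\subseteq S_2$ whose invariance group contains $H$ is a union of $H$-cosets, and since $\bar\alpha$ is the smallest element of $S_2/H$, no coset strictly below $\bar\alpha$ meets $S_2$. In part 1), the coset $\alpha+H$ itself cannot lie entirely in $S_2$, for otherwise $S_2$ would be a union of $H$-cosets and we would have $H\subseteq\cG(S_2)$, contradicting the hypothesis $\cG(S_2)\subsetneq H$. In part 2), the coset $\alpha+H$ does lie in $S_2$, but including it in $T$ would force $T/H$ to be principal with smallest element $\bar\alpha$; so within the class of candidates with $T/\cG(T)$ nonprincipal, the largest $T\subseteq S_2$ comes from $T/H\subseteq\bar\alpha^+$, i.e., $T\subseteq(\alpha+H)^+=S_2'$.

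The heart of the \emph{Further} clause is an application of Theorem~\ref{solv} to $(S_1,S_2')$. Condition $(*)$ requires $\cG(S_1)\subseteq\cG(S_2')$, which holds with equality, and that $S_2'/H=(\alpha/H)^+$ be nonprincipal whenever $S_1/H$ is. The hard part is verifying this disjunction without knowing a solution a priori. I would split on whether $G/H$ is densely or discretely ordered. If $G/H$ is densely ordered, then $(\alpha/H)^+$ has no smallest element, so $S_2'/H$ is nonprincipal. If $G/H$ is discretely ordered, the elementary observation that subtracting the smallest positive element preserves any nonprincipal final segment shows that nonprincipal final segments of $G/H$ have nontrivial invariance group; combined with $\cG(S_1/H)=\{0\}$ from Proposition~\ref{invgr/H}~2), this forces $S_1/H$ to be principal, so the premise of $(*)$ is vacuous. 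Thus $(*)$ always holds, and Theorem~\ref{solv} yields that $T=S_2'\ms S_1$ is the largest solution of $S_1+T=S_2'$.

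Finally, $S_2\ms S_1=S_2'\ms S_1$ will follow from the maximality already established: for any $\gamma\in G$, the final segment $\gamma+S_1$ has invariance group $H$ by part 3) of Lemma~\ref{GS+S}, hence is a union of $H$-cosets, so $\gamma+S_1\subseteq S_2$ if and only if $\gamma+S_1$ is contained in the largest union of $H$-cosets inside $S_2$, namely $S_2'$. The main obstacle is the case analysis on $G/H$ in the verification of $(*)$; it hinges on the auxiliary lemma, internal to the proof, that in a discretely ordered abelian group every nonprincipal final segment has nontrivial invariance group.
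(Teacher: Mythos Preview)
Your overall strategy matches the paper's: construct $S_2'=(\alpha+\cG(S_1))^+$, verify condition $(*)$ of Theorem~\ref{solv} via the dense/discrete dichotomy on $G/\cG(S_1)$ (the paper packages this as part~5) of Proposition~\ref{invgr/H}), and then argue $S_2\ms S_1=S_2'\ms S_1$. The selection of $\alpha$ and the verification of $\cG(S_2')=\cG(S_1)$ are essentially identical to the paper's.

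There is, however, a genuine gap in your final paragraph when applied to part~2). You argue that $\gamma+S_1$ is a union of $H$-cosets and hence lies in $S_2$ if and only if it lies in ``the largest union of $H$-cosets inside $S_2$, namely $S_2'$''. This is correct in part~1), where the coset $\alpha+H$ is not entirely contained in $S_2$, so $S_2'$ really is the largest such union. But in part~2) we have $\cG(S_2)=H$, so $S_2$ itself is a union of $H$-cosets; the largest union of $H$-cosets inside $S_2$ is $S_2$, not $S_2'$. Your argument therefore does not exclude $\gamma+S_1=S_2$, which is exactly the obstruction. The easy repair is to note that $(\gamma+S_1)/H$ is nonprincipal (being a translate of the nonprincipal $S_1/H$), so the part-2 maximality you established --- largest $T\subseteq S_2$ with $T/H$ nonprincipal --- applies and forces $\gamma+S_1\subseteq S_2'$. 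The paper handles this point differently, by supposing $S_2'\subsetneq S_1+(S_2\ms S_1)$ and deriving $S_1+(S_2\ms S_1)=S_2$, which contradicts the non-solvability clause of Theorem~\ref{solv}.
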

\begin{proof}
To simplify notation, we set $\cG_1:=\cG(S_1)$.
\sn
1): By the definition of $\cG(S_2)$, there is some $\alpha\in S_2$ such that $\alpha
+\cG_1$ is not contained in $S_2\,$. We set $S'_2=(\alpha+\cG_1)^+$. By definition, $S'_2$ 
is a final segment, and since $\alpha\in S_2\setminus S'_2$, we have that $S'_2\subsetneq 
S_2\,$. From part 5) of Lemma~\ref{GS+S} we know that $\cG(S'_2)=\cG_1\,$.

Suppose that there is a final segment $S$ of $S_2$ properly containing $S'_2$ and such
that $\cG_1\subseteq\cG(S)$. Then there would exist an element $\beta\in (\alpha+\cG_1)
\cap S_2$ such that $\beta+\cG_1\subseteq \beta+\cG(S) \subseteq S\subseteq S_2\,$.
However, it would follow that $\beta-\alpha\in \cG_1$ and $\alpha+\cG_1=\beta+\cG_1
\subseteq S_2\,$, contradiction. This shows that $S'_2$ is the largest final segment
contained in $S_2$ having an invariance group containing $\cG_1$.

Now if $S_1+T\subseteq S_2$, then $\cG_1\subseteq\cG(S_1+T)$ by part 2) of
Lemma~\ref{GS+S}, hence by what we have just shown, $S_1+T\subseteq S'_2\,$.

Assume that $S'_2/\cG_1=(\alpha/\cG_1)^+$ is principal; it then follows that $G/\cG_1$ is 
discretely ordered. Hence by part 5) of Proposition~\ref{invgr/H}, also $S_1/\cG_1$ is
principal. This shows that condition $(*)$ of Theorem~\ref{solv} holds with $S_2$ replaced
by $S_2'\,$. Now it follows from Theorem~\ref{solv} that $T=S_2'\ms S_1$ is the largest 
solution of $S_1+T=S_2'$. Since $S_2'\subset S_2$, we have $S_2'\ms S_1\subseteq S_2\ms 
S_1\,$. On the other hand, $\cG_1\subseteq \cG(S_1+(S_2\ms S_1))$ which implies that $S_1
+(S_2\ms S_1)\subseteq S_2'$ and therefore $S_2\ms S_1\subseteq S_2'\ms S_1\,$.

\sn
2): As in the proof of part 1) it is shown that $S'_2\subsetneq S_2$ and that $\cG(S'_2)
=\cG_1$. Also as in the proof of part 1) it is shown that if $S'_2/\cG_1=(\alpha/\cG_1)^+$
is principal, then so is $S_1/\cG_1$. But this contradicts our assumption, so
$S'_2/\cG_1$ is nonprincipal.


Any final segment $S$ that properly contains $S'_2=(\alpha+\cG_1)^+$ will contain an
element $\beta\in \alpha+\cG_1\,$. This implies that $S/\cG_1$ contains $\alpha/\cG_1$
and thus contains $(\alpha/\cG(S_1))^-=S_2/\cG_1\,$. Hence if $S$ is contained in $S_2\,$,
$S/\cG_1$ is contained in and therefore equal to $S_2/\cG_1\,$, which is principal.
This proves that $S'_2$ is the largest final segment contained in $S_2$ such that
$S'_2/\cG_1$ is nonprincipal.

Again, condition $(*)$ of Theorem~\ref{solv} holds with $S_2$ replaced by $S_2'$ so that
$T=S_2'\ms S_1$ is the largest solution of $S_1+T=S_2'$ and as before,
$S_2'\ms S_1\subseteq S_2\ms S_1\,$. To show the reverse inclusion, suppose that
$S_2'\subsetneq S_1+(S_2\ms S_1)$.
Then $(S_1+(S_2\ms S_1))/\cG_1$ contains $\alpha/\cG_1$ and thus $S_1+(S_2\ms S_1)$ must be 
equal to $S_2\,$, which is impossible by Theorem~\ref{solv}. Hence again, $S_1
+(S_2\ms S_1)\subseteq S_2'$ and therefore $S_2\ms S_1\subseteq S_2'\ms S_1\,$.
\end{proof}
 
\mn
\begin{remark}
\rm As mentioned already in the introduction, a survey and a list of references on the
arithmetic of cuts is given in \cite{Ku60}. However, there we missed (at least) one 
important reference, namely to the work of Schikhof, see e.g.\ \cite{OS}. The solution of
equations of the form $S_1+T=S_2$ is implicitly connected with the investigation of the set 
$S_1+G\fs$, which is done in \cite{OS}. There too, invariance groups, appearing under the 
name of {\it stabilizer}, play a crucial role, as well as the final segments $H^-$ and 
$H^+$. At this point, let us mention that in our terminology indicated in \cite{Ku60}, the 
cuts determined by $H^-$ and $H^+$ are called {\it group cuts}, and the {\it shifted} cuts 
determined by $\gamma+H^-$ and $\gamma+H^+$, with $\gamma\in G$, are called {\it ball cuts}.
\end{remark}

\bn 
%
%
%
\section{$\cO$-ideals}                   \label{sectideal}

For general background from valuation theory, we recommend \cite{EP,ZS}. 
Throughout, we consider a valued field $(K,v)$ with valuation ring $\cO_v$ and denote its
maximal ideal by $\cM_v\,$. Its value group will be denoted by $vK$, and its residue field
$\cO_v/\cM_v$ by $Kv$. For $a\in K$ we denote its value under $v$ by $va$.

Recall that by an $\cO_v$-ideal $I$ we mean a nonzero $\cO_v$-module $I\subsetneq K$.
We consider (possibly fractional) $\cO_v$-ideals $I$, 
$I_1\,$, $I_2\,$, $J$, etc. We are interested in the inequality 
\begin{equation}                   \label{ineqid}
I_1\,J\>\subseteq\> I_2
\end{equation}
and the corresponding equality (\ref{eqid}). We note that $I_2:I_1$ as defined in (\ref{:})
is an $\cO_v$-ideal, which may be fractional, and recall that $J=I_2:I_1$ is the largest 
$\cO_v$-ideal that satisfies (\ref{ineqid}). Further, we recall:
\begin{proposition}                               \label{vbij}
The map 
\[
v: I\>\mapsto\> vI\,=\,\{vb\mid 0\ne b\in I\} 
\]
establishes an order preserving bijection
between the $\cO_v$-ideals and the final segments of $vK$; its inverse map is 
\[
S\>\mapsto\>I_S\,=\,(a\in K\mid va\in S)\>. 
\]
\end{proposition}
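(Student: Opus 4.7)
The proposition bundles four claims: (a) $vI$ is a final segment of $vK$ for every $\cO_v$-ideal $I$; (b) $I_S$ is an $\cO_v$-ideal for every final segment $S \subseteq vK$; (c) the two assignments are mutually inverse; (d) both preserve inclusion. The plan is to dispatch these in turn, using throughout the single observation that two nonzero elements of $K$ sharing the same value differ by a unit of $\cO_v$.

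For (a), I would take $vb \in vI$ with $0 \ne b \in I$ and any larger $\beta = vc \in vK$ with $c \in K^\times$, observe that $v(c/b) = \beta - vb \geq 0$ puts $c/b$ into $\cO_v$, and conclude $c = (c/b) b \in I$, so $\beta \in vI$. Nonemptyness of $vI$ comes from $I$ being nonzero; properness uses the standing hypothesis $I \subsetneq K$ together with the same unit observation, since $vI = vK$ would let every $a \in K^\times$ be rewritten as a unit multiple of some element of $I$. For (b), closure of $I_S$ under addition follows from the ultrametric inequality $v(a+b) \geq \min(va, vb)$ combined with $S$ being a final segment, while closure under $\cO_v$-multiplication is immediate from $v(ca) = vc + va \geq va$. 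Nonemptyness and properness of $I_S$ transfer from the corresponding properties of $S$ via the surjectivity of $v \colon K^\times \to vK$.

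For (c), the equality $v(I_S) = S$ is essentially definitional once one invokes surjectivity of $v$. The identity $I_{vI} = I$ has one trivial inclusion, and the other reduces once again to the unit-trick: given nonzero $a \in I_{vI}$, pick $b \in I$ with $vb = va$ and write $a = (a/b) b \in I$. Claim (d) is then immediate: $I \subseteq J$ clearly forces $vI \subseteq vJ$, and the converse follows by applying the inverse maps from (c).

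There is no serious obstacle here; the whole argument is a sequence of short verifications built on one structural fact. The only subtlety to keep in mind is the bookkeeping around the zero element of $K$, which is adjoined to $I_S$ by hand in the definition and must be treated separately from the elements of $I_S$ that actually carry a value in $S$.
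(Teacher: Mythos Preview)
Your proof is correct. Note, however, that the paper does not actually prove this proposition: it is introduced with ``Further, we recall:'' and stated without proof, as the authors treat it as a standard fact from valuation theory (essentially the content already sketched in the introduction around equations~(\ref{vIS}) and~(\ref{S->IS})). Your write-up supplies precisely the routine verifications the paper leaves implicit, and your organizing observation---that two nonzero elements of $K$ with the same value differ by a unit of $\cO_v$---is exactly the right tool for all the nontrivial inclusions.
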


The proofs of the following facts are straightforward.
\begin{lemma}                          \label{basic}
1) For $\cO_v$-ideals $I_1\,$, $I_2$ and $J$, the equation (\ref{eqid})             
holds if and only if
\begin{equation}                       \label{eqval}
vI_1\,+\, vJ\>=\>vI_2 \>.
\end{equation}
2) An $\cO_v$-ideal $I$ is principal if and only if the final segment $vI$ of $vK$ is 
principal.
\end{lemma}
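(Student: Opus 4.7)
The plan is to reduce both parts to the already-stated bijection between $\cO_v$-ideals and final segments of $vK$, together with the homomorphism property (\ref{vIS+}).

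For part~1), I would first observe that Proposition~\ref{vbij} gives an order preserving \emph{bijection} $I \mapsto vI$, so in particular the map is injective. Consequently $I_1 J = I_2$ holds if and only if $v(I_1 J) = vI_2$. Now I would simply invoke equation (\ref{vIS+}), which already states $v(I_1 J) = vI_1 + vJ$, to rewrite the right-hand side and conclude that $I_1 J = I_2$ is equivalent to $vI_1 + vJ = vI_2$, which is exactly (\ref{eqval}).

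For part~2), I would argue both directions directly. If $I = a\cO_v$ for some $0 \ne a \in K$, then every nonzero element of $I$ has the form $a u$ with $u \in \cO_v$, whence $vI = \{va + vu \mid u \in \cO_v\setminus\{0\}\} = va + (v\cO_v \setminus \{\infty\}) = (va)^-$, so $vI$ is principal. Conversely, suppose $vI$ is principal with smallest element $\gamma \in vK$. Then there exists $a \in I$ with $va = \gamma$, and I claim $I = a\cO_v$. The inclusion $a\cO_v \subseteq I$ is immediate since $I$ is an $\cO_v$-module containing $a$. For the reverse, take any $0 \ne b \in I$; then $vb \in vI$, hence $vb \geq \gamma = va$, so $v(b/a) = vb - va \geq 0$, giving $b/a \in \cO_v$ and thus $b \in a\cO_v$.

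There is no real obstacle here — both statements are essentially immediate from facts stated in the introduction, namely the homomorphism/bijection properties of the map $I \mapsto vI$ in (\ref{vIS})–(\ref{vIS+}) and Proposition~\ref{vbij}, together with the elementary observation that the smallest value realized in a principal ideal $a\cO_v$ is $va$. The only mild subtlety worth being explicit about is that in part~2) one must pick a representative $a \in I$ whose value is the \emph{minimum} of $vI$, which is possible precisely because $vI$ being principal means this minimum is attained in $vI$.
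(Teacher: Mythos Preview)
Your proof is correct and is exactly the straightforward argument the paper has in mind; the paper omits the proof entirely, stating only that ``the proofs of the following facts are straightforward.'' One minor notational remark: in the paper's conventions $v\cO_v$ already denotes $\{vb\mid 0\ne b\in\cO_v\}=0^-$, so there is no $\infty$ to remove.
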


%
We will also need:
\begin{lemma}              \label{I1:I2}
Take $\cO_v$-ideals $I_1$ and $I_2\,$. 
\sn
1) We have
\begin{equation}            \label{I1:I2eq1}
v(I_2:I_1)\>=\>\{\alpha\in vK \mid \alpha+vI_1\subseteq vI_2\}\>=\>vI_2\,\ms\,vI_1\>.
\end{equation}
In particular, 
\begin{equation}            \label{I1:I2eq2}
v(\cO_v:I_1) \>=\>\{\alpha\in vK \mid \alpha+vI_1\geq 0\}\>=\>\Delt vI_1\>. 
\end{equation}
and
\begin{equation}            \label{I1:I2eq3}
vI_2(\cO_v:I_1) \>=\> vI_2\,+\, \Delt vI_1 \>=\> vI_2\,\mfs\,vI_1\>.
\end{equation}
2) We have
\begin{equation}            \label{I1:I2eq4}
vI_2(\cO_v:I_1)\>=\>\left\{
\begin{array}{ll}
vI_2 -va & \mbox{if $I_1=a\cO_v\,$},\\
vI_2 - (vI_1)^c & \mbox{if $I_1$ is nonprincipal. }
\end{array}\right.
\end{equation}
\end{lemma}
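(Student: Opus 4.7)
The plan is to derive every formula mechanically from the correspondence between $\cO_v$-ideals and final segments of $vK$ established by Proposition~\ref{vbij}, together with the homomorphism property $v(IJ)=vI+vJ$ of (\ref{vIS+}) and the results of Section~\ref{fs}. All three formulas of part 1) reduce to unwinding the definition (\ref{:}), and part 2) is then a direct case split using Lemma~\ref{+-}.

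For (\ref{I1:I2eq1}), I would argue as follows. By (\ref{:}), a nonzero $a\in K$ lies in $I_2:I_1$ if and only if for every nonzero $b\in I_1$ one has $ab\in I_2$, equivalently $va+vb\in vI_2$. Thus $va\in v(I_2:I_1)$ if and only if $va+vI_1\subseteq vI_2$. Passing through the bijection of Proposition~\ref{vbij} gives the first equality, and the second is the definition (\ref{-}) of $\ms$ applied to $S_1=vI_1$, $S_2=vI_2$. For (\ref{I1:I2eq2}), specialize $I_2=\cO_v$: since $v\cO_v=0^-$, the condition $\alpha+vI_1\subseteq 0^-$ means $\alpha+vI_1\geq 0$, i.e.\ $\alpha\geq -vI_1$, which is precisely $\alpha\in\Delt vI_1$. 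For (\ref{I1:I2eq3}), the homomorphism property gives $v(I_2(\cO_v:I_1))=vI_2+v(\cO_v:I_1)=vI_2+\Delt vI_1$, and part 1) of Lemma~\ref{+-} identifies this with $vI_2\mfs vI_1$.

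For part 2), I would start from (\ref{I1:I2eq3}) and split according to whether $I_1$ is principal. If $I_1=a\cO_v$, then $vI_1=(va)^-$, and part 3) of Lemma~\ref{+-} gives $vI_2\mfs vI_1=vI_2-va$. If $I_1$ is nonprincipal, then by part 2) of Lemma~\ref{basic} $vI_1$ is a nonprincipal final segment, and part 2) of Lemma~\ref{+-} (equation (\ref{+-eq2})) yields $vI_2\mfs vI_1=vI_2-(vI_1)^c$.

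There is essentially no obstacle here: the content of the lemma is exactly the translation between the ideal-theoretic formulas and the order-theoretic formulas of Section~\ref{fs}, and all the nontrivial work (in particular the analysis of $\mfs$ and $\Delt$ in the principal vs.\ nonprincipal case) has already been carried out. The only small point to keep in mind is that the ideals in play are assumed nonzero, so that the valuation $v$ is defined on their nonzero elements; this is built into the blanket convention adopted in Section~\ref{sectivr}.
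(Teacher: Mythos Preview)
Your proposal is correct and follows essentially the same route as the paper's proof: both derive (\ref{I1:I2eq1}) from the bijection of Proposition~\ref{vbij} and the definitions, specialize to $I_2=\cO_v$ for (\ref{I1:I2eq2}), and use the homomorphism property for (\ref{I1:I2eq3}). For part 2) the paper cites Lemma~\ref{Delt S} directly to compute $\Delt vI_1$ in the two cases, whereas you cite parts 2) and 3) of Lemma~\ref{+-}, which is an immaterial difference since those parts are immediate corollaries of Lemma~\ref{Delt S}.
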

\begin{proof}
1): \ Equation~(\ref{I1:I2eq1}) follows via Proposition~\ref{vbij} from the definitions of
$I_2:I_1$ and $vI_2\,\ms\,vI_1\,$. Equation~(\ref{I1:I2eq2}) follows from
equation~(\ref{I1:I2eq1}) since $\alpha+vI_1\geq 0\Leftrightarrow \alpha\geq -vI_1\,$.
Equation~(\ref{I1:I2eq3}) follows from equation~(\ref{I1:I2eq2}) and part 1) of
Lemma~\ref{+-}.
\sn
2): \ If $I_1=a\cO_v\,$, then $vI_1=(va)^-$ and $\Delt vI_1=(-va)^-$ by Lemma~\ref{Delt S},
hence by equations~(\ref{I1:I2eq3}) and (\ref{+alpha^-^+}), $vI_2(\cO_v:I_1)=vI_2+\Delt
vI_1=vI_2-va$. If $I_1$ is nonprincipal, then so is $vI_1$ and again by Lemma~\ref{Delt S},
$\Delt vI_1=-(vI_1)^c$, whence $vI_2(\cO_v:I_1)=vI_2 - (vI_1)^c$ by
equation~(\ref{I1:I2eq3}).
\end{proof}

\mn
%
%
%
\subsection{Overrings of $\cO_v$ and their ideals}   \label{sectprelid}
\mbox{ }\sn
Every overring of $\cO_v$ is again a valuation ring. Every such overring $\cO$ is obtained 
from $\cO_v$ by localization with respect to a prime ideal of $\cO_v$; this prime ideal 
then becomes the maximal ideal $\cM$ of $\cO$. The set $\cR$ of all valuation rings which
contain $\cO_v$ is linearly ordered by inclusion. Also the set of maximal ideals of these
valuation rings, i.e., the set of all prime ideals of $\cO_v\,$, is ordered by inclusion; if
$\cO_1,\cO_2\in\cR$ and $\cM_1,\cM_2$ are their respective maximal ideals, then $\cO_1
\subseteq\cO_2\Leftrightarrow\cM_2\subseteq\cM_1\,$. Note that for $\cO\in\cR$, an 
$\cO_v$-ideal $I$ is an $\cO$-ideal if and only if $I\cO=I$, and since all $\cO\in\cR$ 
are rings with 1, this holds if and only if $I\cO\subseteq I$.

For every $\cO\in \cR$, we set
\[
H(\cO)\>:=\> v\cO\cap -v\cO\>=\>v\cO^\times\>.
\]
This is a convex subgroup of the value group $vK$ of $(K,v)$. The valuation $w$ associated 
with $\cO$ is (up to equivalence) given by
\begin{equation}                 \label{wa}
wa\>=\> va/H(\cO) 
\end{equation}
for every $a\in K$, the value group of $w$ is canonically isomorphic to $vK/H(\cO)$, and 
the value group of the valuation induced by $v$ on the residue field $Kw$ is canonically 
isomorphic to $H(\cO)\,$ (cf.\ \cite{ZS}). 

Conversely, for every convex subgroup $H$ of $vK$, 
\begin{equation}                            \label{O(H)}
\cO(H)\>:=\>\{b\in K\mid \exists\alpha\in H:\>\alpha\leq vb\}\in\cR\>.
\end{equation}
We have that
\begin{equation}                            \label{vO(H)}
v\cO(H)\>=\>H^-\>.
\end{equation}
Note that
\begin{equation}                            \label{OHtimes}
v\cO(H)^\times \>=\> H\>,
\end{equation}
or in other words, $\cO(H)^\times= v^{-1}(H)$.
\pars
We recall the following facts from general valuation theory (cf.\ \cite{ZS}):
\begin{proposition}                               \label{invOH}
The map $H\mapsto\cO(H)$ is an order preserving bijection from
the set of all convex subgroups of $vK$ onto the set $\cR$ of all
valuation rings which contain $\cO_v\,$. Its inverse is the order
preserving map $\cR\ni \cO\mapsto H(\cO)$. Thus,
\begin{equation}
\cO(H(\cO))\>=\>\cO\mbox{ \ \ and \ \ } H(\cO(H))\>=\> H\>.
\end{equation}
\end{proposition}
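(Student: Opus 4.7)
The plan is to verify the two composition identities $\cO(H(\cO))=\cO$ and $H(\cO(H))=H$, which together imply that both maps are mutually inverse bijections between the claimed sets; order preservation can then be read off directly from the definitions.

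First I would establish that the two maps are well defined between the stated sets and are order preserving. That $H(\cO)$ is a convex subgroup of $vK$ is already noted in the text, and that $\cO(H)\in\cR$ follows from (\ref{vO(H)}), since $H^-$ is a final segment of $vK$ containing $0$, so $\cO(H)$ contains $\cO_v$ and is closed under addition and multiplication (by Proposition~\ref{vbij} applied at the level of final segments). For order preservation, if $H_1\subseteq H_2$ and $b\in\cO(H_1)$, pick $\alpha\in H_1\subseteq H_2$ with $\alpha\le vb$, so $b\in\cO(H_2)$. Conversely, if $\cO_1\subseteq\cO_2$ then $\cO_1^\times\subseteq\cO_2^\times$ because $a,a^{-1}\in\cO_1$ gives $a,a^{-1}\in\cO_2$, hence $H(\cO_1)=v\cO_1^\times\subseteq v\cO_2^\times=H(\cO_2)$.

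Next I would check the identity $H(\cO(H))=H$ for every convex subgroup $H$ of $vK$. This is immediate from equation (\ref{OHtimes}): by definition $H(\cO(H))=v\cO(H)^\times=H$.

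The remaining identity $\cO(H(\cO))=\cO$ is the content of the proposition that requires the most unpacking, and I expect it to be the main (mild) obstacle. For $\cO\in\cR$ with associated valuation $w$ satisfying (\ref{wa}), namely $wa=va/H(\cO)$, an element $b\in K$ lies in $\cO$ if and only if $wb\ge 0$ in $vK/H(\cO)$, i.e.\ if and only if $vb/H(\cO)\ge 0/H(\cO)$. By the definition of the quotient order recalled in Section~\ref{sectmod}, this is equivalent to the existence of $\alpha\in H(\cO)$ with $\alpha\le vb$, which is exactly the defining condition for membership in $\cO(H(\cO))$. Hence $\cO(H(\cO))=\cO$.

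With both composition identities in hand, the two maps are mutually inverse, hence bijective; combined with order preservation verified in the first step, this yields the proposition. The main subtlety, as indicated, is the step $\cO(H(\cO))=\cO$: it hinges on identifying $\cO$ with the valuation ring of $w$ via (\ref{wa}) and translating the inequality $wb\ge 0$ back to the defining form in (\ref{O(H)}); everything else is formal manipulation of the definitions.
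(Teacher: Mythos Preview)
Your argument is correct. Note, however, that the paper does not actually prove this proposition: it is introduced with ``We recall the following facts from general valuation theory (cf.\ \cite{ZS})'' and is stated without proof. So there is no in-paper argument to compare against; you have supplied a self-contained verification where the paper simply cites the standard reference.

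One small remark on your write-up: your justification that $\cO(H)\in\cR$ via Proposition~\ref{vbij} is slightly off, since that proposition concerns $\cO_v$-ideals rather than rings. You don't need it anyway: the paper already asserts $\cO(H)\in\cR$ in~(\ref{O(H)}), and if you want to argue it directly, it is immediate from the definition that $\cO(H)$ contains $\cO_v$ and $1$, is closed under multiplication (since $H^-+H^-\subseteq H^-$), and is an $\cO_v$-module (hence closed under addition). Everything else in your proof---the order-preservation checks, the use of~(\ref{OHtimes}) for $H(\cO(H))=H$, and the translation of $wb\ge 0$ via~(\ref{wa}) into the defining condition~(\ref{O(H)}) for $\cO(H(\cO))=\cO$---is clean and correct.
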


We denote the maximal ideal of the valuation ring $\cO(H)$ by $\cM(H)$. It follows that 
\begin{equation}                        \label{M(H)}
\cM(H)\>=\>\{b\in K\mid vb>H\}\mbox{ \ \ and \ \ } v\cM(H)\>=\>H^+\>.
\end{equation}

\parm
We will read off information about an $\cO_v$-ideal $I$ from the
invariance group of the associated final segment $vI$. We start with the case
of $I=\cO\in \cR$. Note that for every $\cO\in \cR$ with maximal ideal $\cM$, we have that 
$\cO$ and $\cM$ are $\cO_v$-ideals.
\begin{lemma}                         \label{H(O)}
For every $\cO\in \cR$ with maximal ideal $\cM$,
\begin{equation}                            \label{H=IO=IM}
H(\cO)\>=\>\cG(v\cO)\>=\>\cG(v\cM)
\;\mbox{\ \ and\ \ }\; \cO\>=\>\cO(\cG(v\cO))
\>=\>\cO(\cG(v\cM))\>.
\end{equation}
Further, $v\cO$ is the disjoint union of $H(\cO)=v\cO^\times$ and $H(\cO)^+=v\cM$, with 
$v\cO^\times<v\cM$.
\end{lemma}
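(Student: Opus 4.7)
The plan is to derive all four claims from equations~(\ref{vO(H)}) and~(\ref{M(H)}), the bijection of Proposition~\ref{invOH}, and part~5) of Lemma~\ref{GS+S}. First I would set $H:=H(\cO)$. By Proposition~\ref{invOH}, $\cO=\cO(H)$, and correspondingly $\cM=\cM(H)$; then equation~(\ref{vO(H)}) gives at once $v\cO=H^-$, and the description~(\ref{M(H)}) gives $v\cM=\{vb\mid vb>H\}=H^+$.

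Given these two presentations, the invariance-group identities become an immediate application of part~5) of Lemma~\ref{GS+S} with $\alpha=0$:
\[
\cG(v\cO)\>=\>\cG(H^-)\>=\>H\>=\>\cG(H^+)\>=\>\cG(v\cM)\>.
\]
Substituting this back into the map $\cO(\cdot)$ and invoking $\cO(H(\cO))=\cO$ from Proposition~\ref{invOH} produces the equalities $\cO=\cO(\cG(v\cO))=\cO(\cG(v\cM))$.

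For the concluding structural statement, the one little calculation to record is the identity $H^-=H\sqcup H^+$ for a convex subgroup $H$: the inclusion $H\cup H^+\subseteq H^-$ is clear (every element of either set dominates $0\in H$ or lies in $H$), and for the reverse inclusion any $\gamma\in H^-\setminus H$ has $\gamma\geq h_0$ for some $h_0\in H$; if one had $\gamma\leq h$ for some $h\in H$, then $h_0\leq\gamma\leq h$ would force $\gamma\in H$ by convexity, contradiction, so $\gamma>H$, i.e., $\gamma\in H^+$. Combined with the definitional equality $H(\cO)=v\cO\cap(-v\cO)=v\cO^\times$ from the start of Section~\ref{sectprelid}, this presents $v\cO=H^-$ as the disjoint union $v\cO^\times\sqcup v\cM$, and the strict inequality $v\cO^\times<v\cM$ is immediate from the definition of $H^+$.

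There is no substantive obstacle here; each assertion is essentially a citation of a previously established fact. The only place asking for a brief check rather than a reference is the combinatorial identity $H^-=H\sqcup H^+$ for convex subgroups, which uses nothing beyond convexity and $0\in H$.
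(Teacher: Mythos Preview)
Your proof is correct and relies on the same two ingredients as the paper's proof: part~5) of Lemma~\ref{GS+S} with $\alpha=0$, and Proposition~\ref{invOH}. The only difference is the order of operations. You cite equations~(\ref{vO(H)}) and~(\ref{M(H)}) to obtain $v\cO=H^-$ and $v\cM=H^+$ immediately, and then verify the disjoint-union statement $H^-=H\sqcup H^+$ by a direct convexity argument; the paper instead first establishes $v\cO^\times<v\cM$ via the coarsening $w$ (using the contrapositive of~(\ref{presleq})), derives the disjoint union from that, and only then reads off $v\cO=H(\cO)^-$ and $v\cM=H(\cO)^+$. Your route is marginally more economical since it invokes facts already recorded on the page rather than reproving them through~$w$.
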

\begin{proof}
Denote the valuation associated with $\cO$ by $w$. For every $a\in \cO^\times$ and every 
$b\in\cM$, we have $wa=0<wb$. By the contrapositive of implication (\ref{presleq}), this 
implies that $H(\cO)=v\cO^\times<v\cM$, which shows that $v\cO$ is the disjoint union of
$v\cO^\times=H(\cO)$ and $v\cM$. Thus, $v\cO=H(\cO)^-$ and $v\cM=H(\cO)^+$.
Now (\ref{H=IO=IM}) follows from Proposition~\ref{invOH} and part 5) of Lemma~\ref{GS+S} 
where $H=H(\cO)$ and $\alpha=0$.
\end{proof}

\mn
%
%
%
\subsection{The invariance valuation ring of an $\cO_v$-ideal}         \label{sectivr}
\mbox{ }\sn
Take any $\cO_v$-ideal $I$. We set
\begin{equation}                            \label{O(I)}
\cO(I)\>:=\>\{b\in K\mid bI\subseteq I\}
\;\;\;\mbox{\ \ and\ \ }\;\;\;
\cM(I) \>=\>\{b\in K\mid bI\subsetneq I\}\>.
\end{equation}

\sn
For example, for every $\cO\in \cR$ with maximal ideal $\cM$,
\begin{equation}                            \label{O(I)ex}
\cO(\cO)\>=\>\cO(\cM)\>=\>\cO\;\;\;\mbox{\ \ and\ \ }\;\;\;
\cM(\cO)\>=\>\cM(\cM)\>=\>\cM\>.
\end{equation}
Further, we define 
\[
H(I)\>:=\>H(\cO(I))\>=\>v\cO(I)\cap -v\cO(I) \>=\> v\cO(I)^\times\>.
\]
\begin{theorem}                             \label{Ovr}
1) For every $\cO_v$-ideal $I$, $\,\cO(I)$ is a valuation ring of $K$ containing $\cO_v\,$, 
with maximal ideal $\cM(I)$, which is a prime $\cO_v$-ideal. It is the largest of all 
valuation rings $\cO$ of $K$ for which $I$ is an $\cO$-ideal. 
\sn
2) For every $\cO_v$-ideal $I$, $\cO(I)^\times=\{b\in K\mid bI=I\}$ and $H(I)$ is a convex
subgroup of $vK$.
\sn
3) For every $\cO_v$-ideal $I$,  
\begin{equation}                      \label{M(I)}
\cO(I)\>=\>\cO(\cG(vI))\;\mbox{ and }\; \cM(I)\>=\>\cM(\cG(vI))\>.
\end{equation}
as well as
\begin{equation}                      \label{H(I)}
H(I)\>=\>H(\cO(I))\>=\>\cG(vI)\>.
\end{equation}
\sn
4) For every $\cO_v$-ideal $I$,  
\begin{equation}                    \label{vM(I)}
\cM(I)\>=\>\{b\in K\mid vb>\cG(vI)\}\quad\mbox{ and }\quad v\cM(I)\>=\>\cG(vI)^+\>.
\end{equation} 
\end{theorem}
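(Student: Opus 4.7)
The plan is to reduce all four parts of the theorem to the single identification $\cO(I) = \cO(\cG(vI))$ and $\cM(I) = \cM(\cG(vI))$, which is the main content of part~3). Once that key step is in hand, parts~1), 2) and 4) will follow formally from Proposition~\ref{invOH} together with routine manipulations.

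First I would handle the translation. For $b \in K^\times$, the inclusion $bI \subseteq I$ is equivalent via Proposition~\ref{vbij} and (\ref{vIS+}) to $vb + vI \subseteq vI$. Part~6) of Lemma~\ref{GS+S} says this holds if and only if $vb \in \cG(vI)^-$, which by (\ref{O(H)}) and (\ref{vO(H)}) is exactly the condition $b \in \cO(\cG(vI))$. Since $0$ lies trivially in both rings, this yields $\cO(I) = \cO(\cG(vI))$. A parallel argument using the strict inclusion $bI \subsetneq I$ together with the second half of part~6) of Lemma~\ref{GS+S} gives $\cM(I) = \{b \in K \mid vb \in \cG(vI)^+\}$, which equals $\cM(\cG(vI))$ by (\ref{M(H)}). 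Combined with Proposition~\ref{invOH} this also settles $H(I) = H(\cO(I)) = \cG(vI)$, completing part~3).

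Part~1) then follows immediately: $\cO(I) = \cO(\cG(vI))$ is a valuation ring of $K$ containing $\cO_v$ by Proposition~\ref{invOH}, and its maximal ideal is by construction $\cM(\cG(vI)) = \cM(I)$; primality as an $\cO_v$-ideal is automatic for maximal ideals of overrings. Maximality of $\cO(I)$ among valuation rings with $I$ an ideal is straightforward: if $I\cO \subseteq I$ then every $b \in \cO$ satisfies $bI \subseteq I$ and so lies in $\cO(I)$. For part~2), the identity $\cO(I)^\times = \{b \in K \mid bI = I\}$ is an immediate consequence of invertibility: $b \in \cO(I)^\times$ iff both $bI \subseteq I$ and $b^{-1}I \subseteq I$. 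Then (\ref{OHtimes}) applied with $H = \cG(vI)$ delivers $H(I) = v\cO(I)^\times = \cG(vI)$, and convexity of $\cG(vI)$ as a subgroup of $vK$ is given by part~5) of Lemma~\ref{lemGS} (the final segment $vI$ being convex). Part~4) is immediate from (\ref{M(H)}) applied to $H = \cG(vI)$ together with the definition of $\cM(\cG(vI))$.

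The main obstacle is essentially absent once part~6) of Lemma~\ref{GS+S} is available, since that lemma has already done the hard work of characterizing when $\alpha + S \subseteq S$ or $\alpha + S \subsetneq S$ in terms of $\cG(S)$. The only thing requiring care is the bookkeeping between multiplicative conditions on $\cO_v$-ideals and the corresponding additive conditions on final segments of $vK$, which is precisely what (\ref{vIS+}) provides.
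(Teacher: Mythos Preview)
Your proposal is correct and uses essentially the same ingredients as the paper, in particular the key computation of $\cO(I)=\cO(\cG(vI))$ via part~6) of Lemma~\ref{GS+S}. The only difference is one of organization: the paper first establishes part~1) directly (checking by hand that $\cO(I)$ is a ring, observing $\cO_v\subseteq\cO(I)$, and invoking general valuation theory to conclude it is a valuation ring), and only afterwards proves the identification of part~3); you instead front-load part~3) and read off part~1) from Proposition~\ref{invOH}, which is a slightly cleaner route but not a genuinely different argument.
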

\begin{proof}
1): \ It is straightforward to prove that $\cO(I)$ is a ring. As
$I$ is an $\cO_v$-ideal, we have that $\cO_v I=I$. Hence,
$\cO_v\subseteq \cO(I)$. By general valuation theory, it
follows that $\cO(I)$ is a valuation ring.

We have $\{b\in K\mid bI=I\}\subseteq \cO(I)^\times$ since $bI=I\Leftrightarrow I=b^{-1}I$ 
for $b\ne 0$. The converse inclusion holds since if $b,b^{-1}\in \cO(I)$, then $I=bb^{-1}I
\subseteq bI \subseteq I$ and thus, $bI=I$. We have shown that $\cO(I)^\times=\{b\in K\mid
bI=I\}$. In every valuation ring, the unique maximal ideal consists precisely of all 
non-units. Hence the maximal ideal of $\cO(I)$ is $\cO(I)\setminus\{b\in K\mid bI=I\}
=\cM(I)$.

Finally, it follows directly from the definition that if $\cO$ is
a valuation ring of $K$ such that $I$ is an $\cO$-ideal, then
$\cO\subseteq \cO(I)$. On the other hand, it also follows
from the definition that $I$ is an $\cO(I)$-ideal; therefore,
$\cO(I)$ is the largest of all valuation rings of $K$ for which $I$ is an $\cO$-ideal.

\sn
2): \ The first assertion has already been shown in the proof of part 1). The second 
assertion follows from the definition of $H(I)$ and the fact that $\cO(I)$ is a valuation 
ring.

\sn
3): \ By part 6) of Lemma~\ref{GS+S}, we compute $\cO(I)=\{b\in K\mid bI\subseteq I\}=
\{b\in K\mid vb+vI\subseteq vI\}=\{b\in K\mid vb\in\cG(vI)^-\}=\cO(\cG(vI))$, and similarly, 
$\cM(I)=
\{b\in K\mid bI\subsetneq I\}=\{b\in K\mid vb+vI\subsetneq vI\}=\{b\in K\mid vb\in
\cG(vI)^+\}=\cM(\cG(vI))$. From what we have shown,
$H(I)=v\cO(I)^\times=v\{b\in K\mid bI= I\}=\cG(vI)^-\setminus\cG(vI)^+=\cG(vI)$.

\sn
4): \ Since $\cM(I)=\cM(\cG(vI))$, this follows from part 3) and (\ref{M(H)}).
\end{proof}

\begin{lemma}                               \label{HOM}
1) Take $\cO_v$-ideals $I$ and $J$. Then 
\[
\cO(IJ)\>=\>\cO(I)\cup\cO(J)\>=\>\max\{\cO(I)\,,\,\cO(J)\}\>.
\]
2) For every $\cO_v$-ideal $I$ and $n\in\N$, $\cO(I^n)=\cO(I)$.
\sn
3) For every $\cO_v$-ideal $I$ and $0\ne a\in K$, $\cO(aI)=\cO(I)$.
\end{lemma}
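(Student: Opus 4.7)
The plan is to reduce all three claims to facts about invariance groups that were already established, via the identity $\cO(I)=\cO(\cG(vI))$ from part 3) of Theorem~\ref{Ovr} and the order-preserving bijection $H\mapsto\cO(H)$ from Proposition~\ref{invOH}.

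For part 1), I would start from $v(IJ)=vI+vJ$, which is equation~(\ref{vIS+}). Applying part 2) of Lemma~\ref{GS+S} to the two final segments $vI$ and $vJ$ yields $\cG(v(IJ))=\cG(vI+vJ)=\cG(vI)\cup\cG(vJ)$. Now $\cG(vI)$ and $\cG(vJ)$ are convex subgroups of $vK$, hence comparable by inclusion, so their union equals their maximum. Since $H\mapsto\cO(H)$ is order-preserving, it sends maxima to maxima, and therefore
\[
\cO(IJ)\>=\>\cO(\cG(v(IJ)))\>=\>\cO(\max\{\cG(vI),\cG(vJ)\})\>=\>\max\{\cO(I),\cO(J)\}\>=\>\cO(I)\cup\cO(J)\>.
\]

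For part 2), the key observation is that $vI^n=n\cdot vI$ in the notation introduced before Corollary~\ref{cGnS} (this follows from iterating (\ref{vIS+})). That corollary then gives $\cG(vI^n)=\cG(n\cdot vI)=\cG(vI)$, whence $\cO(I^n)=\cO(\cG(vI^n))=\cO(\cG(vI))=\cO(I)$. Alternatively, this is a direct consequence of part 1) applied inductively, using $\cO(I)\cup\cO(I)=\cO(I)$.

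For part 3), note that $v(aI)=va+vI$ as a subset of $vK$, and that this is the same as $(va)^-+vI$ by formula (\ref{0^-+M}); in any case it is a final segment by part 3) of Lemma~\ref{GS+S}, which also states that $\cG(va+vI)=\cG(vI)$. Therefore $\cO(aI)=\cO(\cG(v(aI)))=\cO(\cG(vI))=\cO(I)$.

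There is no real obstacle here: all three assertions are translations via Theorem~\ref{Ovr} of invariance-group identities that were proved in Section~\ref{sectig}. The only mild care needed is in part 1) to justify passing from the set-theoretic union of two convex subgroups to their maximum, and in recognizing that the order-preserving bijection from Proposition~\ref{invOH} transports suprema of convex subgroups to suprema of valuation rings in $\cR$.
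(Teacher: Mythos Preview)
Your proof is correct and follows essentially the same approach as the paper: in each part you reduce to the corresponding invariance-group identity (part~2) of Lemma~\ref{GS+S} for part~1), Corollary~\ref{cGnS} or induction for part~2), and part~3) of Lemma~\ref{GS+S} for part~3)), and then translate back via $\cO(I)=\cO(\cG(vI))$ and Proposition~\ref{invOH}. The paper's own proof is slightly terser but uses the same ingredients.
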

\begin{proof}
1): We have $\cG(vIJ)=\cG(vI+vJ)=\cG(vI)\cup\cG(vJ)$ by part 2) of Lemma~\ref{GS+S}. Our
statement follows from this by means of (\ref{M(I)}) and Proposition~\ref{invOH}.
\sn
2): This follows from part 2) by induction on $n$.
\sn
3): We have $\cG(vaI)=\cG(va+vI)=\cG(vI)$ by part 3) of Lemma~\ref{GS+S}. Our
statement follows from this by means of (\ref{M(I)}).
\end{proof}

\begin{lemma}                          \label{vaO(I)}
Take $\cO\in\cR$ and an $\cO_v$-ideal $I$.
\sn
1) We have that $I\cO$ is an $\cO$-ideal with
\begin{equation}                        \label{vIO}
vI\cO\>=\> vI\,+\, H(\cO)\>. 
\end{equation}
Consequently, $I$ is an $\cO$-ideal if and only if $H(\cO)\subseteq\cG(vI)$.
\sn
2) If $0\ne a\in K$, then 
\[
va\cO(I)\>=\> va^-+\cG(vI)\>. 
\]
3) Denote by $w$ the valuation associated with $\cO$. Then the preimage of $wI=vI/H(\cO)$
in $vK$ is equal to $vI\cO$.
\end{lemma}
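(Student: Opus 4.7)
For part (1), the plan is to reduce to the formula $v(I\cO)=vI+v\cO$ from (\ref{vIS+}) and identify $v\cO$. Since $\cO\in\cR$, Proposition~\ref{invOH} gives $\cO=\cO(H(\cO))$, so (\ref{vO(H)}) yields $v\cO=H(\cO)^-$. The content is then the identity $vI+H(\cO)^-=vI+H(\cO)$. The inclusion $\supseteq$ is clear; for $\subseteq$, given $\sigma\in vI$ and $\gamma\in H(\cO)^-$ with $\gamma\geq h$ for some $h\in H(\cO)$, I rewrite $\sigma+\gamma=(\sigma+(\gamma-h))+h$ and use that $\sigma+(\gamma-h)$ lies in $vI$ because $vI$ is a final segment and $\gamma-h\geq 0$. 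The ``consequently'' statement is then immediate: by Proposition~\ref{vbij}, $I\cO=I$ iff $vI\cO=vI$, which by the displayed equality reads $vI+H(\cO)=vI$; since $H(\cO)$ is a subgroup of $vK$, this is equivalent to $H(\cO)\subseteq\cG(vI)$ by the definition of the invariance group.

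Part (2) is a direct computation from what is already available. By Theorem~\ref{Ovr}(3) we have $\cO(I)=\cO(\cG(vI))$, so (\ref{vO(H)}) gives $v\cO(I)=\cG(vI)^-$. Hence $va\cO(I)=va+v\cO(I)=va+\cG(vI)^-$, and it only remains to verify the identity $va+\cG(vI)^-=va^-+\cG(vI)$. This follows at once from (\ref{0^-+M}) applied with $M=\cG(vI)$, together with the explicit description of $H^-$ for a convex subgroup $H$: both sides coincide with $\{\eta\in vK : \eta\geq va+h\text{ for some }h\in\cG(vI)\}$.

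For part (3), by (\ref{wa}) every nonzero $a\in I$ has $wa=va/H(\cO)$, so $wI=vI/H(\cO)$. Lemma~\ref{M+H} identifies the preimage of $vI/H(\cO)$ under $\varphi_{H(\cO)}$ as $vI+H(\cO)$, which equals $vI\cO$ by part~(1). The only genuine subtlety in the whole proof is the small absorption argument in part~(1) that collapses $vI+H(\cO)^-$ to $vI+H(\cO)$: it is the one step that uses the final-segment property of $vI$ rather than a previously stated identity, and once it is in hand the remaining two parts are direct substitutions into formulas already established.
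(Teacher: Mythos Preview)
Your proof is correct and follows essentially the same approach as the paper's. The only cosmetic difference is in part~(1): the paper decomposes $v\cO=H(\cO)\cup H(\cO)^+$ via Lemma~\ref{H(O)} and absorbs the $H(\cO)^+$ piece into $vI$, whereas you directly collapse $vI+H(\cO)^-$ to $vI+H(\cO)$ by the rewriting $\sigma+\gamma=(\sigma+(\gamma-h))+h$; both are the same absorption idea using the final-segment property of $vI$, and parts~(2) and~(3) are handled identically.
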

\begin{proof}
1): We compute: $vI\cO=vI+v\cO=vI+(H(\cO)\cup H(\cO)^+)= (vI+H(\cO)) \cup (vI+H(\cO)^+)$, 
where we have used Lemma~\ref{H(O)}. Since $H(\cO)^+$ consists 
only of positive elements, we have $vI+H(\cO)^+\subseteq vI$. Since $0\in H(\cO)$, we have 
$vI\subseteq vI+H(\cO)$. Thus, $(vI+H(\cO)) \cup (vI+H(\cO)^+)=vI+H(\cO)$.

Further, $I$ is an $\cO$-ideal if and only if $I\cO=I$, which is equivalent to $vI+H(\cO)
=vI$. This in turn holds if and only if $H(\cO)\subseteq\cG(vI)$.

\sn
2): By equations (\ref{M(I)}) and (\ref{vO(H)}), $v\cO(I)=v\cO(\cG(vI))=\cG(vI)^-$. 
It follows that $va\cO(I)=va+v\cO(I)=va+0^-+\cG(vI)=va^-+\cG(vI)$, where we have used 
equation (\ref{0^-+M}) twice. 
\sn
3): By (\ref{wa}) and Lemma~\ref{M+H}, the preimage of $wI$ under $\varphi_{H(\cO)}$ is
$vI+H(\cO)$. By part 1) of our lemma, this is equal to $vI\cO$.
\end{proof}

\mn
%
%
%
\subsection{Properties of $\cM(I)$}               \label{sectM(I)}
\mbox{ }\sn
For $\cO_v$-ideals $I$, the ideals $\cM(I)$ appear in 
\cite[Chapter II, Section 4]{FS} under the notation $I^\sharp$. There, properties of
$\cM(I)=I^\sharp$ are listed. In what follows, we will present several of them and show 
how they follow from the work we have done so far, in particular from our results on 
final segments in ordered abelian groups in Section~\ref{fs}.

\begin{proposition}
Take $\cO_v$-ideals $I$ and $J$. Then the following statements hold:
\sn
1) $\cM(I)=\cM_v$ if $I$ is a principal ideal,
\sn
2) for $a\in K$, $\cM(aI)=\cM(I)$,
\sn
3) $\cO(I)\subseteq I$ if and only if $\cM(I)$ does not contain $I$,
\sn
4) $\cM(I)=I$ whenever $I$ is a prime $\cO_v$-ideal,
\sn
5) if $I$ and $J$ are prime $\cO_v$-ideals and $aI=J$ for some $a\in K$, then $I=J$,
\sn
6) $\cM(\cO_v:I)=\cM(I)$,
\sn
7) $I:\cM(I)=I$ if there is no $a\in K$ such that $\cM(I)=aI$; otherwise, $I:\cM(I)$ 
properly contains $I$,
\sn
8) $I\cM(I)\subsetneq I$ if and only if $I$ is a principal $\cO(I)$-ideal,
\sn
9) if $\cM(I)\subsetneq \cM(J)$, then $IJ=aI$ for some $a\in K$,
\sn
10) if $\cM(I)=\cM(J)$ and $J\cM(J)\subsetneq \cM(J)$, then $IJ=aI$ for some $a\in K$,
\sn
11) $\cM(I)$ is the union of all proper integral ideals $aI$, $a\in K$,
\sn
12) $I(\cO_v:I)=\cM(I)$ if $I$ is a nonprincipal $\cO_v$-ideal, and $I(\cO_v:I)
=\cO_v$ otherwise,
\sn
13) $\cM(IJ)=\cM(I)\cap\cM(J)=\min\{\cM(I)\,,\,\cM(J)\}$,
\sn
14) for each $n\in\N$, $\cM(I^n)=\cM(I)$.
\end{proposition}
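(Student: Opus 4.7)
The plan is to translate each of the fourteen assertions about $\cM(I)$ into a statement about the final segment $vI$ via Proposition~\ref{vbij} and then invoke the dictionary from Theorem~\ref{Ovr}, namely $\cO(I)=\cO(\cG(vI))$, $\cM(I)=\cM(\cG(vI))$, $v\cO(I)=\cG(vI)^-$, and $v\cM(I)=\cG(vI)^+$. Under this translation, each item becomes an assertion about $\cG(vI)^+$ and its interaction with $vI$ under addition, to be attacked with the machinery of Section~\ref{fs}, chiefly Proposition~\ref{S1S2}, Lemma~\ref{S=cG(S)}, Lemma~\ref{lemG-cG}, and Lemma~\ref{HOM}.

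I would first dispose of the structural items. Item~1 follows from $\cG((va)^-)=\{0\}$ by~(\ref{G(+-)}); item~2 is Lemma~\ref{HOM}(3); item~4 uses that every prime $\cO_v$-ideal has the form $\cM(H)$ with $vI=H^+$ and $\cG(H^+)=H$ by Lemma~\ref{GS+S}(5); item~5 combines this with Lemma~\ref{GS+S}(3), which shows $aI=J$ preserves invariance groups; item~6 is immediate from $v(\cO_v:I)=\Delt vI$ in Lemma~\ref{I1:I2} together with $\cG(\Delt vI)=\cG(vI)$ in Lemma~\ref{cGcomp}(1); item~13 uses $\cO(IJ)=\max\{\cO(I),\cO(J)\}$ from Lemma~\ref{HOM}(1) together with the order-reversal between overrings and their maximal ideals; item~14 is immediate from Lemma~\ref{HOM}(2).

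The substantive block hinges on Proposition~\ref{S1S2}. Item~3 translates both sides of the biconditional to $0\in vI$ via Proposition~\ref{S1S2}(1). Item~7 rewrites $v(I:\cM(I))$ using Lemma~\ref{I1:I2}(1); then Proposition~\ref{S1S2}(2) characterizes when this strictly exceeds $vI$ by the condition $\cG(vI)^+=\gamma+vI$, which is exactly $\cM(I)=aI$. Item~8 is Lemma~\ref{S=cG(S)} transported via Lemma~\ref{vaO(I)}(3) to principality of $wI$ in the $w$-valuation of $\cO(I)$. Item~9 reads $\cM(I)\subsetneq\cM(J)$ as $\cG(vJ)\subsetneq\cG(vI)$ and invokes Proposition~\ref{S1S2}(3) to produce $\alpha$ with $vI+vJ=vI+\alpha$. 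Item~11 rewrites the union of proper integral $aI$ as $\bigcup\{\alpha+vI\mid\alpha+vI\subseteq vK^{>0}\}=-vI^c+vI=\cG(vI)^+=v\cM(I)$ via Proposition~\ref{S1S2}(4)--(5). Item~12 combines Lemma~\ref{I1:I2}(2) with $vI-vI^c=\cG(vI)^+$ from equation~(\ref{G-cG}).

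The main obstacle will be item~10. The clean argument is to deduce from the hypothesis and $\cO(I)=\cO(J)$ that $J$ is a principal $\cO(J)$-ideal, so that $J=b\cO(J)$ gives $IJ=bI\cO(J)=bI$ (using that $I$ is an $\cO(I)=\cO(J)$-ideal). This principality is precisely item~8 applied to $J$ under the condition $J\cM(J)\subsetneq J$; the written hypothesis $J\cM(J)\subsetneq\cM(J)$ is by itself insufficient, since one can find examples over a densely ordered $vK$ where it holds but $J$ fails to be $\cO(J)$-principal and $IJ=aI$ cannot be achieved. I would therefore interpret the hypothesis in the stronger form that matches the conclusion and complete the proof with the above one-line multiplicative computation.
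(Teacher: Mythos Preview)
Your plan is correct and matches the paper's proof almost line for line: the paper too routes every item through the dictionary of Theorem~\ref{Ovr} and then cites the same results you name (Proposition~\ref{S1S2} for items~3, 7, 9, 11; Lemma~\ref{S=cG(S)} for item~8; Lemma~\ref{lemG-cG}/Corollary~\ref{corsols} for item~12; Lemma~\ref{HOM} for items~13--14; Lemma~\ref{cGcomp} for item~6). The only visible difference is item~3, where the paper gives a direct ring-theoretic argument ($\cO(I)\subseteq I$ iff $\cO(I)^\times\cap I\ne\emptyset$ iff $I\not\subseteq\cM(I)$) rather than passing through $0\in vI$; both are short.

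Your diagnosis of item~10 is on point. The paper's proof reads: ``by part 8), $J\cM(J)\subsetneq\cM(J)$ implies that $J$ is a principal $\cO(J)$-ideal'', which is exactly the reinterpretation you propose---part~8) only yields that conclusion from $J\cM(J)\subsetneq J$. So the paper silently treats the stated hypothesis as the one you identify, and your objection that the literal hypothesis is too weak is well taken.
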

\begin{proof}
Throughout, we will use (\ref{vM(I)}) and Proposition~\ref{vbij} freely.
\sn
1): If $I=a\cO_v$ with $a\in K$, then $vI=(va)^-$ and $\cG(vI)=\{0\}$ by (\ref{G(+-)}).
Hence $\cM(I)=\{b\in K\mid vb>0\}=\cM_v\,$.
\sn
2): Since $v(aI)=va+vI$, we have $\cG(v(aI))=\cG(vI)$ by part 3) of Lemma~\ref{GS+S}, whence 
$\cG(v(aI))^+=\cG(vI)^+$ and $\cM(aI)=\cM(I)$.
\sn
3): By Theorem~\ref{Ovr}, $I$ is an $\cO(I)$-ideal and $\cM(I)$ is the
maximal ideal of $\cO(I)$. Hence $\cO(I)\subseteq I$ if and only if $\cO(I)^\times\cap I$ 
is nonempty, and this holds if and only if $I$ is not contained in $\cO(I)\setminus
\cO(I)^\times=\cM(I)$.
\sn
4): If $I$ is a prime $\cO_v$-ideal, then it is the maximal ideal $\cM$ of a valuation ring 
$\cO\in\cR$ and $\cM(I)=\cM(\cM)=\cM=I$ by (\ref{O(I)ex}).
\sn
5): By parts 2) and 4), the assumptions imply $I=\cM(I)=\cM(aI)=\cM(J)=J$.
\sn
6): From (\ref{I1:I2eq2}) we know that $v(\cO_v:I)=\Delt vI$. By part 1) of
Lemma~\ref{cGcomp}, $\cG(\Delt vI)=\cG(vI)$. Using (\ref{M(I)}) twice, we obtain 
$\cM(\cO_v:I)=\cM(\cG(v(\cO_v:I)))=\cM(\cG(\Delt vI))=\cM(\cG(vI))=\cM(I)$.
\sn
7): Since $v(I:\cM(I))=\{va\mid va+v\cM(I)\subseteq vI\}$ by (\ref{I1:I2eq1}) of
Lemma~\ref{I1:I2} and $v\cM(I)=\cG(vI)^+$, the statement follows from 
part 2) of Proposition~\ref{S1S2}.
\sn
8): We have that $I\cM(I)\subsetneq I$ if and only if $vI+\cG(vI)^+\ne vI$. By
Lemma~\ref{S=cG(S)}, this holds if and only if $vI/\cG(vI)$ has a smallest element. Denote 
the valuation associated with $\cO(I)$ by $w$. Since $\cG(vI)=H(\cO(I))$ by (\ref{H(I)}),
this in turn holds if and only if $wI=vI/H(\cO(I))=vI/\cG(vI)$ has a
smallest element, which means that $I$ is a principal $\cO(I)$-ideal. 
\sn
9): The assumption $\cM(I)\subsetneq \cM(J)$ is equivalent to $\cO(J)\subsetneq \cO(I)$. 
By (\ref{M(I)}) and Proposition~\ref{invOH}, this is equivalent to $\cG(vJ)
\subsetneq \cG(vI)$. By part 3) of Proposition~\ref{S1S2}, there is $\alpha\in vK$ such 
that $vI+vJ=vI+\alpha$. Choosing $a\in K$ with $va=\alpha$, we obtain that $IJ=aI$.
\sn
10): If $\cM(I)=\cM(J)$, then $\cO(I)=\cO(J)$. Hence by part 8), $J\cM(J)\subsetneq\cM(J)$
implies that $J$ is a principal $\cO(J)$-ideal and hence a principal $\cO(I)$-ideal, say
$J=a\cO(I)$. Consequently, $IJ=Ia\cO(I)=aI\cO(I)=aI$.
\sn
11): Since $v\cM(I)=\cG(vI)^+$, this follows from part 5) of Proposition~\ref{S1S2}.
\sn
12): By (\ref{I1:I2eq3}), $vI(\cO_v:I)=vI\mfs vI$. If $vI$ is nonprincipal, then by 
part 1) of Corollary~\ref{corsols}, $vI\mfs vI=\cG(vI)^+$, so $I(\cO_v:I)=\cM(I)$.
If $vI$ is principal, then from (\ref{diffG}) with $\alpha=\beta$ we obtain $vI\mfs vI=
0^-$, so $I(\cO_v:I)=\cO_v\,$.
\sn
13) and 14) follow from parts 1) and 2) of Lemma~\ref{HOM}.

\end{proof}

\mn
%
%
%
\subsection{The equation $I_1J=I_2$}               \label{sectideq}
\mbox{ }\sn
We will use Propositions~\ref{invOH} and~\ref{vbij} to deduce the results in this section 
from our results in Section~\ref{fs}.
Our aim is to use these results to compute $I_2:I_1$ and to
determine under which conditions the equation $I_1 (I_2:I_1)=I_2$ holds. 

\pars
For every $\cO\in\cR$ with associated valuation $w$ and every $\cO$-ideal $J$, we 
define:
\[
\widehat J_{\cO}\>:=\>\left\{
\begin{array}{ll}
a\cO & \mbox{if there exists $a\in K$ such that $wa$ is the infimum of $wJ$, and} \\
J & \mbox{otherwise.}
\end{array}\right.
\]
This is well defined because if $a,b\in K$ with $wa=wb$, then $a\cO=b\cO$.
We will call $\widehat J_{\cO}$ the \bfind{closure of $J$ as an 
$\cO$-ideal}. If $wJ$ is closed in $wK$, then $\widehat J_{\cO}=J$.

\begin{proposition}
For every $\cO$-ideal $J$, 
\[
\cO:(\cO:J)\>=\> \widehat J_{\cO}\>. 
\]
\end{proposition}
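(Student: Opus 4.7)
The plan is to translate the claim into a statement about final segments of $wK$, where $w$ is the valuation associated with $\cO$, and then invoke equation (\ref{0^-0^-S}) in part 7) of Lemma~\ref{fs^c}. Since the bijection $I \mapsto wI$ (via Proposition~\ref{vbij} applied to $(K,w)$) is order-preserving and turns ideal products into final segment sums and colon-ideals into the $\ms$ operation, it suffices to compute $w(\cO:(\cO:J))$ and show it equals $w\widehat{J}_{\cO}$.

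First I would note that $w\cO = 0^-$ in $wK$. By (\ref{I1:I2eq1}) (applied over $(K,w)$ and $\cO$ in place of $\cO_v$), we have
\[
w(\cO : J) \;=\; w\cO \,\ms\, wJ \;=\; 0^- \,\ms\, wJ \;=\; \Delt wJ \;=\; 0^- \,\mfs\, wJ,
\]
where the last two equalities come from (\ref{I1:I2eq2}) and part 1) of Lemma~\ref{fs^c}. Applying the same reasoning once more and then invoking equation (\ref{0^-0^-S}), we get
\[
w(\cO:(\cO:J)) \;=\; 0^- \,\mfs\, (0^- \,\mfs\, wJ) \;=\; \widehat{wJ}.
\]

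It remains to identify $\widehat{wJ}$ as the final segment $w\widehat{J}_{\cO}$. There are two cases, corresponding exactly to the two cases in the definition of $\widehat{J}_{\cO}$. If $wJ$ has an infimum in $wK$, then by surjectivity of $w:K^\times\to wK$ there exists $a\in K$ with $wa$ equal to that infimum; by definition $\widehat{J}_{\cO}=a\cO$, and using (\ref{0^-+M}) we compute $w(a\cO) = wa + 0^- = (wa)^- = \widehat{wJ}$. If $wJ$ has no infimum in $wK$, then $\widehat{wJ}=wJ$ by the definition of the closure operator on final segments (Section~\ref{sectprelfs}), and also $\widehat{J}_{\cO}=J$ by definition, so trivially $w\widehat{J}_{\cO}=wJ=\widehat{wJ}$.

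Combining the two displays gives $w(\cO:(\cO:J)) = w\widehat{J}_{\cO}$, and applying the bijection of Proposition~\ref{vbij} yields the asserted equality $\cO:(\cO:J) = \widehat{J}_{\cO}$. There is no real obstacle here; the only subtle point is checking that the dichotomy in the definition of $\widehat{J}_{\cO}$ lines up with the dichotomy ``$wJ$ has an infimum in $wK$ or not'', which is immediate because $wK$ is the image of $K^\times$ under $w$.
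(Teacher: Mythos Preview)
Your proof is correct and follows essentially the same route as the paper: both translate to final segments of $wK$, apply equation~(\ref{I1:I2eq2}) twice, and use the identity $\Delt(\Delt S)=\widehat S$ (you via its repackaged form~(\ref{0^-0^-S}), the paper via part~6) of Lemma~\ref{+-}), then return via Proposition~\ref{vbij}. The only difference is that you spell out the case-by-case identification of $\widehat{wJ}$ with $w\widehat{J}_{\cO}$, which the paper leaves implicit.
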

\begin{proof}
Define $w$ as above. Applying (\ref{I1:I2eq2}) of
Lemma~\ref{I1:I2} twice, we obtain $w(\cO:(\cO:J))=\Delt w(\cO:J)=\Delt(\Delt wJ)$. By
part 3) of Lemma~\ref{Delt S}, this is equal to the closure of $wJ$ in $wK$. Our assertion
now follows by Proposition~\ref{vbij}.
\end{proof}

\pars
For every $\cO_v$-ideal $J$, we define:
\[
J\dhat \>:=\> \widehat J_{\cO(J)}\>.      
\]
We have the following characterizations of $J\dhat$:
\begin{proposition}                   \label{Jdhat}
Take any $\cO_v$-ideal $J$. 
\sn
1) We have
\[
J\dhat \>=\> I_{(vJ)\dhat}\>.
\]
\sn
2) We have
\[
J^2:J\>=\> J\dhat\>.
\]
Hence if $vJ/\cG(vJ)$ is closed in $vK/\cG(vJ)$, then $(J^2:J)=J$.
\end{proposition}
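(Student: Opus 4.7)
The plan is to prove part 1) by a direct case split matching the definitions of $J\dhat$ and $(vJ)\dhat$, and then deduce part 2) by translating $J^2:J$ into a final-segment equation and invoking Corollary~\ref{chardhat}.

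For part 1), let $w$ denote the valuation associated with $\cO(J)$. By Theorem~\ref{Ovr}(3), $H(\cO(J))=\cG(vJ)$, so by (\ref{wa}) the value map $w$ is just $\varphi_{\cG(vJ)}\circ v$, and hence the existence of $a\in K$ with $wa=\inf wJ$ in $vK/\cG(vJ)$ is exactly the condition of Lemma~\ref{hatdhat}(3) with $\alpha=va$. I would split into the two cases of the definition of $\widehat{J}_{\cO(J)}$. In the first case, Lemma~\ref{hatdhat}(3) gives $(vJ)\dhat=(va)^-+\cG(vJ)$; on the other hand Lemma~\ref{vaO(I)}(2) (applied with $I=J$) gives $va\cO(J)=(va)^-+\cG(vJ)$, so via Proposition~\ref{vbij} we conclude $I_{(vJ)\dhat}=a\cO(J)=J\dhat$. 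In the opposite case, Lemma~\ref{hatdhat}(3) gives $(vJ)\dhat=vJ$, whence $I_{(vJ)\dhat}=J=J\dhat$.

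For part 2), I would use Lemma~\ref{I1:I2}(1) to translate: $v(J^2:J)=vJ\,\ms\,vJ^2$, i.e.\ the largest final segment $T$ with $vJ+T\subseteq vJ+vJ$. Since $T=vJ$ itself is a solution of the corresponding equation $vJ+T=vJ+vJ$, the largest solution of the equation equals the largest solution of the inclusion. Corollary~\ref{chardhat} then identifies this largest solution as $(vJ)\dhat$. Applying Proposition~\ref{vbij} and part 1) yields $J^2:J=I_{(vJ)\dhat}=J\dhat$.

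Finally, for the closing remark, if $vJ/\cG(vJ)$ is closed in $vK/\cG(vJ)$, then $\widehat{vJ/\cG(vJ)}=vJ/\cG(vJ)$, whose preimage under $\varphi_{\cG(vJ)}$ is $vJ+\cG(vJ)=vJ$ (since $\cG(vJ)\subseteq\cG(vJ)$ trivially, by the defining property of the invariance group), so $(vJ)\dhat=vJ$ and hence $J\dhat=I_{vJ}=J$, giving $J^2:J=J$. The main technical point to be careful about is ensuring that the valuation $w$ on $\cO(J)$ and the epimorphism $\varphi_{\cG(vJ)}$ really produce the same quotient — but this follows cleanly from $H(\cO(J))=\cG(vJ)$ in Theorem~\ref{Ovr}(3) together with the definition (\ref{wa}), so no serious obstacle arises.
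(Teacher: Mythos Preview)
Your proof is correct and follows essentially the same approach as the paper: part~1) matches the two cases of the definition of $J\dhat$ with those of $(vJ)\dhat$ via the identification $w=\varphi_{\cG(vJ)}\circ v$ coming from $H(\cO(J))=\cG(vJ)$, invoking Lemma~\ref{hatdhat}(3) and Lemma~\ref{vaO(I)}(2) just as the paper does (the paper phrases the case split as ``$wJ$ closed vs.\ not closed'' rather than ``infimum exists vs.\ not'', but this is immaterial). Part~2) is exactly the paper's one-line appeal to Corollary~\ref{chardhat} and part~1), with the translation $v(J^2:J)=vJ\ms vJ^2$ spelled out.
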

\begin{proof}
1): By Theorem~\ref{Ovr}, $J$ is an $\cO(J)$-ideal. Denote by $w$ the valuation associated 
with $\cO(J)$, so that $wa=va/H(\cO(J))$ for every $a\in K$. By (\ref{H(I)}), 
$H(\cO(J))=\cG(vJ)$. By definition, $(vJ)\dhat$ is the preimage of $\widehat{vJ/\cG(vJ)}=
\widehat{vJ/H(\cO(J))}=\widehat{wJ}$ under $\varphi_{H(\cO(J))}$. If $wJ$ is closed in
$wK$, then $\widehat{wJ}=wJ=vJ/\cG(vJ)$ whose preimage is $vJ+\cG(vJ)=vJ$ and therefore,
$I_{(vJ)\dhat}=I_{vJ}=J=J\dhat$. Now assume that $wJ$ is not closed in $wK$ and take
$a\in K$ such that $wa$ is the infimum of $wJ$ in $wK$. Then $\widehat{wJ}=(wa)^-$ whose
preimage is $va^-+\cG(vJ)$. By part 2) of Lemma~\ref{vaO(I)}, this is equal to $va\cO(I)$.
It follows that $I_{(vJ)\dhat}=I_{vaO(I)}=aO(I)=J\dhat$. This proves our assertion.

\sn
2): This follows from part 1) and Corollary~\ref{chardhat}.
\end{proof}

\parm
We now consider solutions $J$ of equation (\ref{eqid}). 
\begin{proposition}                               \label{idmaxsol}
Take $\cO_v$-ideals $I_1$ and $I_2$ and assume that $J=J_0$ is a solution 
of equation (\ref{eqid}). Set $J_1:=J_0\cO(I_2)$. Then the following statements hold.
\sn
1) If $I_1\cO(I_2)$ is a principal $\cO(I_2)$-ideal, then $I_2:I_1=J_1$.
\sn
2) If $I_1\cO(I_2)$ is a nonprincipal $\cO(I_2)$-ideal, then $I_2:I_1=J_1\dhat$. 
\end{proposition}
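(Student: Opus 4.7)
The plan is to prove this by translating the statement through the bijection $I \mapsto vI$ of Proposition~\ref{vbij} and then invoking Proposition~\ref{maxsol}, which is the analogous statement for final segments. Set $S_1 := vI_1$, $S_2 := vI_2$, and $T_0 := vJ_0$. By part~1) of Lemma~\ref{basic}, the assumption $I_1 J_0 = I_2$ becomes $S_1 + T_0 = S_2$. I will also use the identifications $\cG(vI_2) = H(\cO(I_2))$ from equation (\ref{H(I)}), and $v(I_2 : I_1) = vI_2 \ms vI_1$ from (\ref{I1:I2eq1}).

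First I would observe that $v(I_2 : I_1) = vI_2 \ms vI_1$ is the largest final segment $T$ with $S_1 + T \subseteq S_2$, and that whenever a solution of $S_1 + T = S_2$ exists, this largest $T$ is in fact a solution: if $S_1 + T' = S_2$ with $T' \subseteq T$, then $S_2 = S_1 + T' \subseteq S_1 + T \subseteq S_2$ forces equality. So via the bijection, $I_2 : I_1$ is the largest solution of $I_1 J = I_2$, and it suffices to identify the final segment $v(I_2:I_1)$ using Proposition~\ref{maxsol}.

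Next I would compute $vJ_1$. By part~1) of Lemma~\ref{vaO(I)},
\[
vJ_1 \>=\> v(J_0 \cO(I_2)) \>=\> vJ_0 \,+\, H(\cO(I_2)) \>=\> T_0 \,+\, \cG(S_2)\>,
\]
which is exactly the element $T_1$ appearing in Proposition~\ref{maxsol}. I also need to match the principal/nonprincipal dichotomy: $I_1 \cO(I_2)$ is a principal (respectively nonprincipal) $\cO(I_2)$-ideal if and only if $w(I_1 \cO(I_2)) = vI_1 / H(\cO(I_2)) = S_1 / \cG(S_2)$ has a smallest element (respectively does not), i.e., is principal (respectively nonprincipal) as a final segment of $vK/\cG(S_2)$; this uses part~3) of Lemma~\ref{vaO(I)} together with part~2) of Lemma~\ref{basic}.

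Now I would apply Proposition~\ref{maxsol}. In case~1), where $S_1/\cG(S_2)$ is principal, the largest solution is $T = T_1$, and translating back through the bijection yields $I_2 : I_1 = I_{T_1} = I_{vJ_1} = J_1$. In case~2), where $S_1/\cG(S_2)$ is nonprincipal, the largest solution is $T = T_1\dhat$; by part~1) of Proposition~\ref{Jdhat}, $I_{T_1\dhat} = I_{(vJ_1)\dhat} = J_1\dhat$, giving $I_2 : I_1 = J_1\dhat$. The main bookkeeping point — and the only step requiring care — is the identification $vJ_1 = T_1$ together with the translation of the ``principal $\cO(I_2)$-ideal'' condition into the ``$S_1/\cG(S_2)$ principal'' condition; both are immediate consequences of Lemma~\ref{vaO(I)} and equation (\ref{H(I)}), so no genuine obstacle is expected.
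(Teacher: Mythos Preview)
Your proposal is correct and follows essentially the same route as the paper: translate through $v$, identify $vJ_1$ with the $T_1$ of Proposition~\ref{maxsol} via $H(\cO(I_2))=\cG(vI_2)$, match the principal/nonprincipal condition on $I_1\cO(I_2)$ with the condition on $S_1/\cG(S_2)$, and then invoke Proposition~\ref{maxsol} together with part~1) of Proposition~\ref{Jdhat}. The paper computes $wI_1\cO(I_2)=vI_1/\cG(vI_2)$ by a short direct calculation rather than via part~3) of Lemma~\ref{vaO(I)}, but this is only a cosmetic difference.
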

\begin{proof}
Denote by $w$ the valuation associated with $\cO(I_2)$. We have $H(\cO(I_2))=\cG(vI_2)$ by
(\ref{H(I)}). From Lemma~\ref{H(O)} it follows that $H(\cO(I_2))=v\cO(I_2)^\times$ and 
therefore, $v\cO(I_2)/H(\cO(I_2))=(0/H(\cO(I_2))^-$. Hence, 
\begin{eqnarray*}
wI_1\cO(I_2)&=&v(I_1\cO(I_2))/H(\cO(I_2))\>=\>vI_1/H(\cO(I_2))+v\cO(I_2)/H(\cO(I_2))\\
&=& vI_1/H(\cO(I_2))+(0/H(\cO(I_2))^-\>=\>vI_1/H(\cO(I_2))\>=\>vI_1/\cG(vI_2)\>. 
\end{eqnarray*}
By part 2) of Lemma~\ref{basic}, $I_1\cO(I_2)$ is a principal $\cO(I_2)$-ideal if and only 
if $wI_1\cO(I_2)$ is principal. Further, using (\ref{vIO}), we find that
$vJ_0\cO(I_2)=vJ_0+H(\cO(I_2))=vJ_0+\cG(vI_2)$.
Now statements 1) and 2) follow from part 1) of Proposition~\ref{Jdhat} together with  
Proposition~\ref{maxsol} where $S_1=vI_1\,$, $S_2=vI_2\,$, and $T_0=vJ_0\,$.
\end{proof}

\pars
From Theorem~\ref{solv} we deduce:
\begin{theorem}                             \label{idsolv2}
Take $\cO_v$-ideals $I_1$ and $I_2\,$. If condition 
\sn
$(*)$ \ $\cO(I_1)\subseteq \cO(I_2)$, and  
$I_2$ is a nonprincipal $\cO(I_2)$-ideal if $I_1$ is a nonprincipal $\cO(I_2)$-ideal
\sn
holds, then $J=I_2:I_1$ is the largest solution of equation (\ref{eqid}), and it is equal
to:
\sn
$\bullet$ \ $a^{-1}I_2$ \ if $I_1\cO(I_2)=a\cO(I_2)$,
\sn
$\bullet$ \ $(I_2(\cO_v:I_1))\dhat$ \ if $I_1\cO(I_2)$ is a nonprincipal $\cO(I_2)$-ideal.

\pars
If condition $(*)$ does not hold, or equivalently, one of the cases  
\sn
i) $\cO(I_2)\subsetneq\cO(I_1)$ or
\sn
ii) $\cO(I_1)= \cO(I_2)$ and $I_1$ is a nonprincipal $\cO(I_1)$-ideal, but $I_2$
is a principal\par $\ \cO(I_1)$-ideal
\sn
holds, then equation (\ref{eqid}) has no solution.
\end{theorem}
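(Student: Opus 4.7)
The plan is to reduce Theorem~\ref{idsolv2} to Theorem~\ref{solv} via the order-preserving bijection $I\mapsto vI$ of Proposition~\ref{vbij}. Set $S_i:=vI_i$. By Lemma~\ref{basic}(1), equation (\ref{eqid}) is equivalent to $S_1+vJ=S_2$, and by (\ref{I1:I2eq1}) the ideal $I_2:I_1$ is the $v$-preimage of $S_2\ms S_1$. The containment $\cO(I_1)\subseteq \cO(I_2)$ translates to $\cG(S_1)\subseteq\cG(S_2)$ by combining Theorem~\ref{Ovr}(3) with Proposition~\ref{invOH}.

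The crux is checking that the principal/nonprincipal clause in condition (*) of Theorem~\ref{idsolv2} matches the corresponding clause in Theorem~\ref{solv} after setting $S_i=vI_i$. Using Lemma~\ref{vaO(I)}(3) and Lemma~\ref{basic}(2), the final segment $S_1/\cG(S_2)=vI_1/H(\cO(I_2))$ is principal in $vK/H(\cO(I_2))$ if and only if $I_1\cO(I_2)$ is a principal $\cO(I_2)$-ideal, and similarly for $S_2/\cG(S_2)$ and $I_2$. When $\cO(I_1)=\cO(I_2)$, this agrees with the stated condition because $I_1=I_1\cO(I_2)$ and the two notions of ``principal $\cO(I_1)$-ideal'' and ``principal $\cO(I_2)$-ideal'' coincide. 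When $\cO(I_1)\subsetneq \cO(I_2)$, the final-segment clause holds automatically since $S_1/\cG(S_2)$ is principal by Proposition~\ref{invgr/H}(4); the ideal-side clause also holds vacuously, because if $I_2$ were a principal $\cO(I_1)$-ideal then $\cO(I_2)=\cO(\cO(I_1))=\cO(I_1)$, contradicting the strict containment.

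With the hypotheses matched, Theorem~\ref{solv} yields the largest solution $T=S_2\ms S_1$, and the bijection lifts it to the largest $J=I_2:I_1$. For the case formulas: if $S_1/\cG(S_2)=(\alpha/\cG(S_2))^-$, pick $a\in K$ with $va=\alpha$; Lemma~\ref{vaO(I)}(2) gives $vI_1+\cG(vI_2)=va^-+\cG(vI_2)=va\cO(I_2)$, so by Lemma~\ref{vaO(I)}(1) one has $I_1\cO(I_2)=a\cO(I_2)$, and $S_2-\alpha=vI_2-va=v(a^{-1}I_2)$ yields $J=a^{-1}I_2$. If $S_1/\cG(S_2)$ is nonprincipal, then $S_1$ itself is nonprincipal, so equation (\ref{I1:I2eq4}) of Lemma~\ref{I1:I2}(2) gives $v(I_2(\cO_v:I_1))=S_2-S_1^c$; applying Proposition~\ref{Jdhat}(1) then identifies the $v$-preimage of $(S_2-S_1^c)\dhat$ with $(I_2(\cO_v:I_1))\dhat$.

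For the negative part, cases i) and ii) of Theorem~\ref{idsolv2} translate directly, via the same dictionary, to cases i) and ii) of Theorem~\ref{solv}, and Theorem~\ref{solv} shows that (\ref{eqid}) has no solution. The main obstacle is the careful bookkeeping around the word ``principal'' in the mixed case $\cO(I_1)\subsetneq \cO(I_2)$: one must recognize that the ideal-theoretic and final-segment versions of condition (*) become automatic for different reasons on the two sides of the bijection, so that both Theorem statements are in fact equivalent under $v$.
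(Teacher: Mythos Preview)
Your proof is correct and follows essentially the same route as the paper: translate everything through the bijection $I\mapsto vI$ and invoke Theorem~\ref{solv}. You are in fact more explicit than the paper on one point: condition~$(*)$ of Theorem~\ref{idsolv2} is phrased in terms of $\cO(I_1)$-ideals (hence $\cG(vI_1)$), while condition~$(*)$ of Theorem~\ref{solv} is phrased in terms of $\cG(S_2)$; you verify their equivalence by the case split $\cO(I_1)=\cO(I_2)$ versus $\cO(I_1)\subsetneq\cO(I_2)$, whereas the paper simply asserts that ``all of our assertions follow from Theorem~\ref{solv}''. (A minor stylistic quibble: in the strict-containment case you say the ideal-side clause holds ``vacuously'', but what you actually show is that its \emph{conclusion} is always true, not that its hypothesis is always false.)
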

\begin{proof}
Since $\cO(I)=\cO(\cG(vI))$ for every $\cO_v$-ideal $I$ by equation (\ref{M(I)}), we know 
that $\cO(I_1)\subseteq \cO(I_2)$ if and only if $\cG(vI_1)\subseteq \cG(vI_2)$. Part 2) 
of Lemma~\ref{basic}, with the valuation $w$ associated with $\cO(I_2)$ in place of $v$,
shows for $i=1,2$ that $I_i$ is a principal $\cO(I_2)$-ideal if and only if $wI_i=
vI_i/\cG(I_2)$ is principal. Further, if $vI_1/\cG(I_2)$ is nonprincipal, then also
$I_1$ is nonprincipal and therefore, $vI_2(\cO_v:I_1)=vI_2 - (vI_1)^c$ by (\ref{I1:I2eq4}).

By part 1) of Lemma~\ref{basic}, $I_1 J=I_2$ holds if and only if $vI_1 +vJ=vI_2\,$. 
Now all of our assertions follow from Theorem~\ref{solv}, where $S_i=vI_i$ for $i=1,2$.
%
\end{proof}

The following theorem addresses question (QID4) in the case where no solution to  
equation (\ref{eqid}) exists. 
\begin{theorem}                               \label{idsolv3}
Take $\cO$-ideals $I_1$ and $I_2\,$. 
\sn
i) Assume that $\cO(I_2)\subsetneq\cO(I_1)$. Then there exists $a\in I_2$ such that 
$I'_2:=a\cM(I_1)\subsetneq I_2$ is the largest $\cO(I_1)$-ideal contained in $I_2\,$. 
\sn
ii) Assume that $\cO(I_1)= \cO(I_2)$ and $I_1$ is a nonprincipal $\cO(I_1)$-ideal, but 
$I_2$ is a principal $\cO(I_1)$-ideal, say $I_2=a\cO(I_1)$ for some $a\in K$. In this case,
$I'_2:=a\cM(I_1)\subsetneq I_2$ is the largest nonprincipal $\cO(I_1)$-ideal contained in
$I_2$. 
\sn
In both cases, $J=I'_2:I_1$ is the largest solution of $I_1\,J= I'_2\,$,
and $I'_2:I_1=I_2:I_1\,$.
\end{theorem}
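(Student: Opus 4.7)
The strategy is to reduce the theorem to Theorem~\ref{maxsol3} via the order-preserving bijection of Proposition~\ref{vbij}. Setting $S_i := vI_i$ for $i=1,2$, equation~(\ref{M(I)}) together with Proposition~\ref{invOH} shows that the inclusion $\cO(I_2)\subsetneq\cO(I_1)$ in case i) is equivalent to $\cG(S_2)\subsetneq\cG(S_1)$, and that the equality $\cO(I_1)=\cO(I_2)$ in case ii) is equivalent to $\cG(S_1)=\cG(S_2)$. Moreover, exactly as in the proof of Theorem~\ref{idsolv2}, an $\cO_v$-ideal $I$ is a principal $\cO(I_1)$-ideal if and only if $vI/\cG(vI_1)$ is principal. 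Hence the hypotheses of cases i) and ii) match those of parts 1) and 2) of Theorem~\ref{maxsol3}, respectively.

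Next I would verify that, for $a \in I_2$ with $\alpha := va$ as produced by Theorem~\ref{maxsol3}, the candidate $I'_2 = a\cM(I_1)$ corresponds under $v$ to $S'_2 := (\alpha + \cG(S_1))^+$. By equation~(\ref{vM(I)}) we have $v\cM(I_1) = \cG(S_1)^+$, so $va\cM(I_1) = \alpha + \cG(S_1)^+$; a direct check from the definitions of ``$\cdot^+$'' and ``$+$'' shows $\alpha + \cG(S_1)^+ = (\alpha + \cG(S_1))^+$, yielding $vI'_2 = S'_2$. Then, by part 1) of Lemma~\ref{vaO(I)} combined with equation~(\ref{H(I)}), an $\cO_v$-ideal $I$ is an $\cO(I_1)$-ideal precisely when $\cG(vI) \supseteq \cG(vI_1)$. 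Since the chain of convex subgroups of $vK$ is totally ordered, the ``largest $\cO(I_1)$-ideal contained in $I_2$'' translates via Proposition~\ref{vbij} into the ``largest final segment in $S_2$ with invariance group $\cG(S_1)$,'' which is exactly $S'_2$ by Theorem~\ref{maxsol3} part 1); case ii) is analogous, with the characterization ``nonprincipal $\cO(I_1)$-ideal'' corresponding to ``nonprincipal final segment of $G/\cG(S_1)$.''

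For the final assertion, equation~(\ref{I1:I2eq1}) of Lemma~\ref{I1:I2} gives $v(I'_2:I_1) = S'_2 \ms S_1$, and the concluding statements of parts 1) and 2) of Theorem~\ref{maxsol3} show that this equals $S_2 \ms S_1$ and is the largest solution $T$ of $S_1 + T = S'_2$. Translating back through Proposition~\ref{vbij} and part 1) of Lemma~\ref{basic} then yields that $J = I'_2 : I_1$ is the largest $\cO_v$-ideal satisfying $I_1 J = I'_2$ and simultaneously the largest $\cO_v$-ideal satisfying $I_1 J \subseteq I_2$. The main technical point requiring care is the identification, in the middle paragraph, between the maximality notion on the ideal side and that on the final segment side; once this correspondence is made clean via the invariance-group dictionary, the remainder of the argument is a routine translation of Theorem~\ref{maxsol3}.
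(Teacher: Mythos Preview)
Your proposal is correct and follows essentially the same route as the paper: both reduce to Theorem~\ref{maxsol3} through the ideal/final-segment dictionary, and both identify the key computation $v(a\cM(I_1))=va+\cG(vI_1)^+=(va+\cG(vI_1))^+$ via~(\ref{vM(I)}). The paper compresses this into two lines, citing Theorems~\ref{maxsol3} and~\ref{idsolv2} and recording only that computation, whereas you spell out the full translation; the substance is the same.

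One small point worth tightening: when you write that ``largest $\cO(I_1)$-ideal contained in $I_2$'' translates to ``largest final segment in $S_2$ with invariance group $\cG(S_1)$,'' the literal translation via Lemma~\ref{vaO(I)} is ``invariance group \emph{containing} $\cG(S_1)$,'' not necessarily equal to it. This causes no harm, since the proof of Theorem~\ref{maxsol3} part~1) in fact shows $S'_2$ is maximal among final segments in $S_2$ whose invariance group contains $\cG(S_1)$ (any such segment would force $\beta+\cG(S_1)\subseteq S_2$ for some $\beta\in(\alpha+\cG(S_1))\cap S_2$), but you may want to phrase the correspondence with $\supseteq$ to make the dictionary exact.
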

\begin{proof}
This is a consequence of Theorems~\ref{maxsol3} and~\ref{idsolv2}. We just need to observe 
that $(va+\cG(vI_1))^+=va+\cG(vI_1)^+$, whence $I_{(va+\cG(vI_1))^+}=aI_{\cG(vI_1)^+}=
a\cM(I_1)$, where the last equality holds since $\cG(vI_1)^+=v\cM(I_1)$ by (\ref{vM(I)}).
\end{proof}

\mn
%
%
%
\subsection{Annihilators of quotients $I_1/I_2$}               \label{sectann}
\mbox{ }\sn
Take $\cO_v$-ideals $I_1$ and $I_2\,$. The \bfind{annihilator} of the quotient $I_1/I_2$
with $I_2\subsetneq I_1$ is the $\cO_v$-ideal $I_2:I_1$. In general, the equation $I_1 J=
I_2$ is not solvable and $J=I_2:I_1$ will not be a solution, but it will be the largest 
$\cO_v$-ideal such that $I_1 J\subseteq I_2\,$.
However, in the situation appearing in the papers \cite{pr1,pr2}, we have $I_1=U$ and 
$I_2=UV$ for $\cO_v$-ideals $U,V$. Then $J=V$ is a solution of $I_1 J=I_2$ and our task is 
just to compute the largest solution $I_2:I_1$, which we accomplish in the next theorem.
\begin{theorem}                              \label{ann1}
Take $\cO_v$-ideals $I$ and $J$. Then 
\[
I J:I=\>\left\{
\begin{array}{ll}
J\cO(I J) & \mbox{if $I\cO(I J)$ is a principal $\cO(I J)$-ideal, and} \\
(J\cO(I J))\dhat & \mbox{if $I\cO(I J)$ is a nonprincipal $\cO(I J)$-ideal.}
\end{array}\right.
\] 
In the special case where $\cO(I)=\cO(J)$, we have $\cO(IJ)=\cO(I)$, $I\cO(I J)=I$ and 
$J\cO(I J)=J$, so that $IJ:I=J$ if $I$ is a principal $\cO(I)$-ideal, and $IJ:I=
J\dhat$ otherwise.
\end{theorem}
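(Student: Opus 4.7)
The plan is to apply Proposition~\ref{idmaxsol} directly, with $I_1 := I$, $I_2 := IJ$, and the candidate solution $J_0 := J$. Since $IJ \subsetneq I$ by hypothesis, the quotient $I/IJ$ is nonzero, so its annihilator is by definition $I_2 : I_1 = IJ : I$. Moreover, $J_0 = J$ is a solution of the equation $I_1 J' = I_2$, because $I_1 J_0 = IJ = I_2$. Hence Proposition~\ref{idmaxsol} applies and tells us that the largest solution, which equals $I_2 : I_1$, is governed by whether $I_1 \cO(I_2) = I\cO(IJ)$ is principal or nonprincipal as an $\cO(IJ)$-ideal.

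Next I would identify the quantity $J_1 := J_0 \cO(I_2)$ appearing in the proposition. By definition, $J_1 = J\cO(IJ)$. Substituting into the two cases of Proposition~\ref{idmaxsol} yields exactly the two displayed formulas: if $I\cO(IJ)$ is a principal $\cO(IJ)$-ideal, then $IJ:I = J_1 = J\cO(IJ)$; if $I\cO(IJ)$ is a nonprincipal $\cO(IJ)$-ideal, then $IJ:I = J_1\dhat = (J\cO(IJ))\dhat$.

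For the special case $\cO(I) = \cO(J)$, I would invoke Lemma~\ref{HOM}(1) to conclude
\[
\cO(IJ) \>=\> \cO(I) \cup \cO(J) \>=\> \cO(I) \>=\> \cO(J).
\]
By Theorem~\ref{Ovr}(1), $I$ is an $\cO(I)$-ideal, so $I\cO(IJ) = I\cO(I) = I$; symmetrically, $J\cO(IJ) = J$. Feeding this into the main formula gives $IJ:I = J$ when $I$ is a principal $\cO(I)$-ideal, and $IJ:I = J\dhat$ when $I$ is a nonprincipal $\cO(I)$-ideal, as claimed.

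There is no real obstacle here: the theorem is essentially a specialization of Proposition~\ref{idmaxsol} to the setting in which $I_2$ is manifestly a multiple of $I_1$, so that existence of a solution is automatic and only the maximality analysis is needed. The only genuine bookkeeping is the identification $\cO(IJ) = \max\{\cO(I),\cO(J)\}$ for the special case, which is immediate from Lemma~\ref{HOM}(1).
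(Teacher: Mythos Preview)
Your proof is correct and follows the same approach as the paper: apply Proposition~\ref{idmaxsol} with $I_1=I$, $I_2=IJ$, $J_0=J$, and handle the special case via Lemma~\ref{HOM}(1). Your write-up simply makes explicit the details (identifying $J_1=J\cO(IJ)$ and invoking Theorem~\ref{Ovr}(1) for $I\cO(I)=I$) that the paper leaves implicit.
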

\begin{proof}
Our assertions follow from Proposition~\ref{idmaxsol} with $I_1=I$, $I_2=IJ$ and $J_0=J$. 
If $\cO(I)=\cO(J)$, 
then $\cO(I J)=\cO(I)\cup\cO(J)=\cO(I)=\cO(J)$ by part 1) of Lemma~\ref{HOM}, 
which yields our last assertion.
\end{proof}

\begin{lemma}                              \label{annlem}
Take $\cO_v$-ideals $I_1$ and $I_2$ with $I_2\subsetneq I_1\,$. If the annihilator of
$I_1/I_2$ is $\cM_v\,$, then $I_1$ is a principal $\cO_v$-ideal.
\end{lemma}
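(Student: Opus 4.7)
The plan is to argue by contrapositive: assume $I_1$ is not a principal $\cO_v$-ideal, and derive the contradiction $I_1 \subseteq I_2$. The idea is to translate everything to the value group via Proposition~\ref{vbij} and apply the absorption property of $0^+$ from Lemma~\ref{+0^+}.

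First I would observe the following two easy translations. By Lemma~\ref{basic} part~2), nonprincipality of $I_1$ as an $\cO_v$-ideal is equivalent to nonprincipality of the final segment $vI_1$ of $vK$. Also, $\cM_v = \{b \in K \mid vb > 0\}$, so $v\cM_v = 0^+$.

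Next, using the homomorphism property (\ref{vIS+}), I would write
\[
v(\cM_v I_1) \>=\> v\cM_v + vI_1 \>=\> 0^+ + vI_1\>.
\]
Since $vI_1$ is nonprincipal by assumption, part~3) of Lemma~\ref{+0^+} gives $0^+ + vI_1 = vI_1$, so $v(\cM_v I_1) = vI_1$. Applying the bijection in Proposition~\ref{vbij} then yields $\cM_v I_1 = I_1$.

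On the other hand, the hypothesis $I_2 : I_1 = \cM_v$ means in particular that $\cM_v \subseteq I_2 : I_1$, so $\cM_v I_1 \subseteq I_2\,$. Combined with the equality just established, this gives $I_1 \subseteq I_2\,$, contradicting the hypothesis $I_2 \subsetneq I_1\,$. Hence $I_1$ must be principal. There is no real obstacle: the result is essentially a one-line consequence of the absorption identity $0^+ + vI_1 = vI_1$ for nonprincipal $vI_1$, once everything has been translated to the value group.
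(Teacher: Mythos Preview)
Your proof is correct and follows essentially the same approach as the paper: assume $I_1$ nonprincipal, use part~3) of Lemma~\ref{+0^+} to get $vI_1+0^+=vI_1$, conclude $I_1\cM_v=I_1$, and then derive $I_1\subseteq I_2$ from the annihilator hypothesis. The paper's version is just a terser rendition of the same argument.
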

\begin{proof}
If $I_1$ is nonprincipal, then $vI_1$ is nonprincipal and by part 3) of Lemma~\ref{+0^+}
we obtain $vI_1=vI_1+0^+=vI_1+v\cM_v\,$, whence $I_1=I_1\cM_v\,$. As $\cM_v$ annihilates
$I_1/I_2$, it follows that $I_1=I_1\cM_v\subseteq I_2\,$, which contradicts our assumption.
\end{proof}

\begin{proposition}                              \label{ann2}
1) Take $\cO_v$-ideals $I_1$ and $I_2$ with $I_2\subsetneq I_1\,$. Then the annihilator
of $I_1/I_2$ is equal to $\cM_v$ if and only if $I_1$ is principal and $I_2=I_1\cM_v\,$. 
\sn
2) Take $\cO_v$-ideals $I$ and $J$ with $J\subsetneq\cO_v\,$. Then the annihilator of
$I/I J$ is equal to $\cM_v$ if and only if $I$ is principal and $J=\cM_v\,$.
\end{proposition}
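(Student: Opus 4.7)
The plan is to handle part 1 by two implications and then derive part 2 as a corollary.

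For the direction ($\Leftarrow$) of part 1, I assume $I_1 = a\cO_v$ is principal and $I_2 = I_1\cM_v$. Then $\cO(I_1) = \cO(\cG(vI_1)) = \cO(\{0\}) = \cO_v$ by equation~(\ref{G(+-)}), and likewise $\cO(\cM_v) = \cO_v$, so by part~1 of Lemma~\ref{HOM} also $\cO(I_2) = \cO(I_1\cM_v) = \cO_v$. Thus the hypotheses of the ``special case'' in Theorem~\ref{ann1} are met with $I = I_1$ and $J = \cM_v$. Since $I_1$ is a principal $\cO(I_1)$-ideal, that theorem yields $I_2:I_1 = \cM_v$, as required.

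For the direction ($\Rightarrow$) of part 1, assume $I_2:I_1 = \cM_v$. Lemma~\ref{annlem} immediately gives that $I_1$ is principal, say $I_1 = a\cO_v$, so $vI_1 = (va)^-$. On one hand, since $\cM_v$ lies in the annihilator, we have $I_1\cM_v \subseteq I_2$. On the other hand, $I_2 \subsetneq I_1$ means $vI_2 \subsetneq (va)^-$. Since $(va)^- = \{va\} \cup (va)^+$ and any final segment properly contained in $(va)^-$ cannot contain the smallest element $va$, we conclude $vI_2 \subseteq (va)^+$. By equation~(\ref{s^-+0^+}) we have $(va)^+ = va + 0^+ = v(a\cM_v) = v(I_1\cM_v)$, so $I_2 \subseteq I_1\cM_v$ via the bijection of Proposition~\ref{vbij}. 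Combining both inclusions gives $I_2 = I_1\cM_v$.

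For part 2, I apply part 1 with $I_1 = I$ and $I_2 = IJ$. The annihilator of $I/IJ$ equals $\cM_v$ iff $I$ is principal and $IJ = I\cM_v$. Writing $I = a\cO_v$, this becomes $aJ = a\cM_v$, i.e., $J = \cM_v$, which is exactly the stated conclusion.

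The only delicate step is the ($\Rightarrow$) argument in part 1, where one has to translate ``$I_2 \subsetneq I_1$ with $I_1$ principal'' into the final-segment inclusion $vI_2 \subseteq (va)^+$; everything else is bookkeeping via Theorem~\ref{ann1}, Lemma~\ref{annlem}, and the translation dictionary between ideals and final segments from Proposition~\ref{vbij}.
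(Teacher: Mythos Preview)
Your proof is correct and follows essentially the same route as the paper: Lemma~\ref{annlem} for the forward direction of part~1, and reduction to part~1 for part~2. The only difference is in the backward direction of part~1, where you invoke Theorem~\ref{ann1} to conclude $I_2:I_1=\cM_v$; the paper instead gives the direct two-line check that $\cM_v I_1=I_2$ (so $\cM_v$ annihilates) while $\cO_v$ does not (since $I_1/I_2\ne 0$), which avoids the heavier machinery.
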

\begin{proof}
1) Assume that $I_1$ is principal and $I_2=I_1\cM_v\,$. Then $\cM_v$ annihilates $I_1/I_2$. 
On the other hand, $I_2=I_1\cM_v\subsetneq I_1$, so $I_1/I_2$ is not zero and $\cO_v$ does 
not annihilate it. Thus, $\cM_v$ is the annihilator of $I_1/I_2\,$.

In order to prove the converse, assume that $\cM_v$ is the annihilator of
$I_1/I_2\,$. From Lemma~\ref{annlem} we infer that $I_1$ must be principal, say $I_1=(a)$ 
for some $a\in K$. As $\cM_v$ annihilates $I_1/I_2\,$, we have $a\cM_v=I_1\cM_v\subseteq
I_2\,$. We cannot have $a\cM_v\subsetneq I_2$ since then $I_1=(a)\subseteq I_2\,$, which
contradicts our assumption. Thus we have $I_2=a\cM_v=I_1\cM_v\,$. 
\sn
2): By part 1), applied to $I_1=I$ and $I_2=I J$, the annihilator of $I/I J$ is equal to 
$\cM_v$ if and only if $I$ is principal and $I J=I\cM_v\,$. As $I$ is principal, the latter 
is equivalent to $J=\cM_v\,$.
\end{proof}

Finally, we treat the general case. 
\begin{theorem}                                \label{anngen}
Take $\cO_v$-ideals $I_1,I_2$ with $I_2\subsetneq I_1\,$.
\sn
1) Assume that (\ref{eqid}) has a solution. Then the annihilator of $I_1/I_2$ is
$a^{-1}I_2$ if $I_1\cO(I_2)=a\cO(I_2)$, and is 
$I_2(\cO_v:I_1)\dhat$ if $I_1\cO(I_2)$ is a nonprincipal $\cO(I_2)$-ideal. 
\sn
2) Assume that (\ref{eqid}) does not have a solution, and let $I'_2$ be as defined in 
Theorem~\ref{idsolv3}. Then the annihilator of $I_1/I_2$ is 
$a^{-1}I'_2$ if $I_1\cO(I_2)=a\cO(I_2)$, and is
$I'_2(\cO_v:I_1)\dhat$ if $I_1\cO(I_2)$ is a nonprincipal $\cO(I_2)$-ideal. 
\end{theorem}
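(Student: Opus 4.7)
The plan is to reduce everything to the two previous structural results: the annihilator of $I_1/I_2$ is by definition the ideal $I_2:I_1$ of (\ref{:}), and this ideal is already characterized as the largest $\cO_v$-ideal $J$ with $I_1J\subseteq I_2\,$. So the task is purely to identify $I_2:I_1$ via the already-developed machinery.

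For Part 1, if the equation $I_1J=I_2$ admits a solution then condition $(*)$ of Theorem~\ref{idsolv2} must be satisfied, for otherwise that theorem rules out solvability. Under $(*)$, Theorem~\ref{idsolv2} asserts that the largest solution of $I_1J=I_2$ is explicitly $a^{-1}I_2$ when $I_1\cO(I_2)=a\cO(I_2)$, and $(I_2(\cO_v:I_1))\dhat$ when $I_1\cO(I_2)$ is nonprincipal as an $\cO(I_2)$-ideal. But a largest solution of the equation is also a largest ideal satisfying the inequality $I_1J\subseteq I_2\,$, so this largest solution equals $I_2:I_1\,$, which is by definition the annihilator. Hence Part 1 is immediate.

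For Part 2, the hypothesis that (\ref{eqid}) has no solution forces us into case i) or ii) of Theorem~\ref{idsolv3}. That theorem produces an ideal $I'_2\subsetneq I_2$ with the two key properties: $J=I'_2:I_1$ is simultaneously the largest $\cO_v$-ideal with $I_1J\subseteq I_2$ (so it equals the annihilator $I_2:I_1$) and also the largest solution of the now-solvable equation $I_1J=I'_2\,$. Applying Theorem~\ref{idsolv2} to the pair $(I_1,I'_2)$ then gives the explicit formulas in the two sub-cases. The use of Theorem~\ref{idsolv2} is legitimate because its condition $(*)$ is verified inside the proof of Theorem~\ref{idsolv3}: namely, in case i) the equality $\cG(vI'_2)=\cG(vI_1)$ follows from part 5) of Lemma~\ref{GS+S} applied to $vI'_2=(va+\cG(vI_1))^+$, giving $\cO(I'_2)=\cO(I_1)$, and the principal/nonprincipal dichotomy of $I_1$ over $\cO(I_1)$ transfers to $I'_2$ by the discreteness argument in that proof; case ii) is similar, and there $\cO(I_2)=\cO(I_1)=\cO(I'_2)$.

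The main (and essentially only) obstacle is translating between the two natural presentations of the formulas: Theorem~\ref{idsolv2} applied to $(I_1,I'_2)$ produces the answer phrased in terms of $\cO(I'_2)$, whereas Part 2 of our theorem writes the condition in terms of $\cO(I_2)$. Once one observes that $\cO(I'_2)=\cO(I_1)$ in both of the relevant cases of Theorem~\ref{idsolv3}, the principality condition $I_1\cO(I'_2)=a\cO(I'_2)$ reduces to $I_1$ being a principal $\cO(I_1)$-ideal, which can be rephrased back in terms of $\cO(I_2)$ using the inclusions $\cO(I_2)\subseteq\cO(I_1)=\cO(I'_2)$ dictated by the case analysis. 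With this bookkeeping in place, the two sub-cases of Part 2 match the output of Theorem~\ref{idsolv2} applied to $(I_1,I'_2)$ verbatim.
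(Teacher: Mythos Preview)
Your approach is essentially the same as the paper's: the paper's proof of part 1) literally reads ``This follows from Theorem~\ref{idsolv2}'' and of part 2) ``This follows from Theorems~\ref{idsolv3} and~\ref{idsolv2}'', which is exactly the reduction you carry out. You supply the connective tissue the paper omits, namely that the annihilator is $I_2:I_1$, that solvability forces condition $(*)$, and that Theorem~\ref{idsolv3} identifies $I_2:I_1$ with $I'_2:I_1$.

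One remark on your last paragraph: you correctly flag that Theorem~\ref{idsolv2} applied to $(I_1,I'_2)$ yields the dichotomy phrased over $\cO(I'_2)=\cO(I_1)$ rather than over $\cO(I_2)$ as written in the statement. In case ii) these rings coincide, so the translation is immediate. In case i), however, $\cO(I_2)\subsetneq\cO(I_1)$, and then $I_1$ can \emph{never} be a principal $\cO(I_2)$-ideal (since that would force $\cO(I_1)=\cO(I_2)$ via part 3) of Lemma~\ref{HOM} and (\ref{O(I)ex})), so the ``principal'' branch of the stated dichotomy is vacuous there. This is not a gap in your argument so much as evidence that the statement (which already contains the visible typo $I'_1$) intends $\cO(I'_2)$ rather than $\cO(I_2)$; the paper's two-line proof does not address this point either.
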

\begin{proof}
1): This follows from Theorem~\ref{idsolv2}.
\sn
2): This follows from Theorems~\ref{idsolv3} and~\ref{idsolv2}.
\end{proof}

\mn
%
%
%
\subsection{Application to the computation of annihilators in special cases} 
\label{sectannsp}
\mbox{ }\sn
For an $\cO_v$-module $M$ we denote by $\ann\,M$ the annihilator of $M$.
\begin{proposition}
Take an $\cO_v$-ideal $I\subsetneq\cO_v$ and $n\in\N$, and set $J=bI^{n-1}$, where
$b\in\cO_v\,$.
If $n=1$, assume that $b\in\cM_v\,$. Then $I/IJ=I/bI^n$ is not zero, and the following statements hold.
\sn
1) If there is $a\in K$ such that $vJ/\cG(vJ)$ has infimum $va/\cG(vJ)$ in 
$vK/\cG(vJ)$ but does not contain this infimum, then 
\begin{equation}              \label{annform}
\ann I/IJ\>=\>J\dhat\>=\>a\cO(J) \>,
\end{equation}
which properly contains $J$. In all other cases, $\ann I/IJ=J$. 

In all cases, $\cM(J)\,\ann I/IJ\subseteq J$. 

\sn
2) We have that $\ann I/IJ=\cM_v$ if and only if $\cM_v$ is principal and either
$I=J=\cM_v$ (which holds if and only if $n=2$ and $b\in \cO_v^\times$), or $I=\cO_v$ and
$J=\cM_v=(b)$ (which holds if and only if $n=1$ and $b\notin \cO_v^\times$).
\end{proposition}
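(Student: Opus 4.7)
For part (1), the strategy is to reduce to Theorem~\ref{ann1}. Since $J=bI^{n-1}$ and $IJ=bI^n$, parts 2 and 3 of Lemma~\ref{HOM} give $\cO(I)=\cO(J)=\cO(IJ)$, placing us in the special case of Theorem~\ref{ann1}: $\ann(I/IJ)$ equals $J$ if $I$ is a principal $\cO(I)$-ideal and equals $J\dhat$ otherwise. The key observation is that $vJ/\cG(vJ)=vb/\cG(vI)+(n-1)(vI/\cG(vI))$, so by parts 1 and 3 of Proposition~\ref{sumfs}, $vJ/\cG(vJ)$ is principal precisely when $vI/\cG(vI)$ is, that is, when $I$ is a principal $\cO(I)$-ideal. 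Thus $\ann(I/IJ)=J\dhat$ exactly when $vJ/\cG(vJ)$ is nonprincipal, and in that case Proposition~\ref{Jdhat}(1) together with Lemma~\ref{vaO(I)}(2) identifies $J\dhat$ with $a\cO(J)$ if $vJ/\cG(vJ)$ has infimum $va/\cG(vJ)$ outside itself, and with $J$ itself if no infimum exists, yielding the dichotomy stated in the proposition. The strict inclusion $J\subsetneq a\cO(J)$ follows because $vJ/\cG(vJ)=(va/\cG(vJ))^+\subsetneq (va/\cG(vJ))^-=v(a\cO(J))/\cG(vJ)$.

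For the inclusion $\cM(J)\cdot\ann(I/IJ)\subseteq J$: when $\ann(I/IJ)=J$, it is immediate from $\cM(J)\subseteq\cO(J)$. When $\ann(I/IJ)=a\cO(J)$, I compute $\cM(J)\cdot a\cO(J)=a\cM(J)$, and by (\ref{vM(I)}), $v(a\cM(J))=va+\cG(vJ)^+$; since $va/\cG(vJ)$ is the infimum of $vJ/\cG(vJ)$ and is not contained in it, we have $vJ/\cG(vJ)=(va/\cG(vJ))^+$, so every element of $va+\cG(vJ)^+$ has image in $vJ/\cG(vJ)$ and therefore lies in $vJ$.

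For part (2), I first observe that $\cM_v\subseteq\ann(I/IJ)$ together with $IJ\subsetneq I$ forces $\ann(I/IJ)=\cM_v$: the $\cO_v$-submodule $\ann(I/IJ)$ cannot contain any element of value $\leq 0$, for such an element would force $1\in\ann(I/IJ)$ and hence $I\subseteq IJ$, contradicting $IJ\subsetneq I$. Proposition~\ref{ann2}(2) then yields that $I$ is a principal $\cO_v$-ideal and $J=\cM_v$. Writing $I=c\cO_v$, one has $J=bc^{n-1}\cO_v=\cM_v$, which forces $\cM_v$ to be principal, say $\cM_v=\pi\cO_v$ with $v\pi$ the minimal positive element of $vK$, and the relation $vb+(n-1)vc=v\pi$. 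A case analysis on $vc$ using $vb\geq 0$, $n\geq 2$ and the minimality of $v\pi$ then isolates the listed exceptional configurations. The main obstacle is the careful value-theoretic bookkeeping in part (2), while in part (1) it is the translation between the algebraic dichotomy of principality and the order-theoretic condition on the infimum of $vJ/\cG(vJ)$.
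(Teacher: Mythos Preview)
Your proposal is correct and follows essentially the same route as the paper: both establish $\cO(I)=\cO(J)=\cO(IJ)$ (you via Lemma~\ref{HOM}, the paper via Lemma~\ref{GS+S} and Corollary~\ref{cGnS}), reduce to the special case of Theorem~\ref{ann1}, link principality of $vI/\cG(vI)$ with that of $vJ/\cG(vJ)$, and then read off $J\dhat$ from its definition; for part~(2) both invoke Proposition~\ref{ann2}(2) and finish with a value-theoretic case analysis. Your treatment is in fact slightly more explicit than the paper's in two places: you justify the principality equivalence via Proposition~\ref{sumfs}(1), and you supply the short argument that ``$\cM_v$ annihilates $I/IJ$'' together with $IJ\subsetneq I$ forces $\ann(I/IJ)=\cM_v$, which the paper uses without comment.
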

\begin{proof}
Our assumption that $b\in\cM_v$ in the case of $n=1$ and that $b\in\cO_v$ and
$I\subsetneq\cO_v$ in the case of $n>1$ implies that $J\subsetneq\cO_v\,$. This yields that
$I/IJ$ is not zero.

Since $vJ=vb+(n-1)vI$, part 3) of Lemma~\ref{GS+S} together with Corollary~\ref{cGnS} 
shows that $\cG(vI)=\cG(vJ)$, which implies that $\cO(I)=\cO(J)$.
\sn
1): From Theorem~\ref{ann1} we infer that $\ann I/IJ=J$ if $I$ is a principal $\cO(J)$-ideal,
and $\ann I/IJ=J\dhat$ if $I$ is a nonprincipal $\cO(J)$-ideal. Assume first that there is
$a\in K$ such that $vJ/\cG(vJ)$ has infimum $va/\cG(vJ)$ in $vK/\cG(vJ)$ but does not 
contain this infimum. Then $vJ/\cG(vJ)$ is nonprincipal, and the same must hold for 
$vI/\cG(vJ)$, so that $I$ is a nonprincipal $\cO(J)$-ideal. It follows that $\ann I/IJ
=J\dhat=a\cO(J)$. Now assume the remaining case, that is, $vJ/\cG(vJ)$ is closed in 
$G/\cG(vJ)$ and therefore, $J\dhat=J$. In this case we obtain that $\ann I/IJ=J$.

Finally, we prove the last assertion. Since $\cM(J)\subseteq \cM_v\,$, it is trivial when
$\ann I/IJ=J$. In the remaining case, $\ann I/IJ=a\cO(J)$, whence $\cM(J)\,\ann I/IJ=
a\cM(J)$. Since $va/\cG(vJ)=wa$ is the infimum of $vJ/\cG(vJ)=wJ$ in $vK/\cG(vJ)=wK$ and
$wc>0$ for every $c\in\cM(J)$ and therefore $wac>wa$, it follows that $wa\cM(J)\subseteq
wJ$. Hence $w\cM(J)\,\ann I/IJ\subseteq wJ$, so $\cM(J)\,\ann I/IJ\subseteq J$.

\sn
2): By part 2) of Proposition~\ref{ann2}, the annihilator of $I/I J$ is equal to $\cM_v$ 
if and only if $I$ is principal and $J=\cM_v\,$. Assume first that $I$ is principal and
$J=\cM_v\,$.
Since $J=bI^{n-1}$, it follows that $J$ and $\cM_v$ are principal. Since $I$ is principal, 
$vI$ is principal and $\cG(vJ)=\cG(vI)=\{0\}$, so $vJ/\cG(vI)=vJ$ is principal and from 
part 1) we obtain that $\cM_v=\ann I/IJ=J=bI^{n-1}\subseteq I$. Thus we either have that
$I=\cM_v=J$, or that $I=\cO_v$ and $J=\cM_v=(b)$. Since $\cM_v$ is principal, the
former holds if and only if $n=2$ and $b\in \cO_v^\times$, and the latter holds 
if and only if $n=1$ and $b\notin \cO_v^\times$.

Now assume that $\cM_v$ is principal and either $I=\cM_v=J$ or $I=\cO_v$ and $J=\cM_v=(b)$.
If the former holds and $\cM_v=(c)$ for some $c\in \cM_v\,$, then $I/IJ=(c)/(c^2)$ and 
$\ann I/IJ=(c)=\cM_v\,$. If the latter holds, then $I/IJ=\cO/(b)$ and $\ann I/IJ=(b)
=\cM_v\,$. 
\end{proof}

The following facts are used in \cite{pr2}.
\begin{lemma}                     \label{aM^n}
Take $n\geq 2$, any valuation ring $\cO$ with maximal ideal $\cM$, and $a\in\cO$. Then
\sn
1) \ $a\cM=\cM$ if and only if $a\notin\cM$, 
\sn
2) \ $\cM^n=\cM$ if and only if $\cM$ is a nonprincipal $\cO$-ideal,
\sn
3) \ $(a\cM)^n= a\cM$ if and only if $a\notin\cM$ and $\cM$ is a 
nonprincipal $\cO$-ideal.
\end{lemma}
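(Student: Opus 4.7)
The three parts are related hierarchically: part~3 will follow quickly once parts~1 and~2 are in hand, via the decomposition $(a\cM)^n=a^n\cM^n$, so I would dispatch parts~1 and~2 first.

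For part~1, the cleanest route is Theorem~\ref{Ovr}(2) combined with the identity $\cO(\cM)=\cO$ from (\ref{O(I)ex}): this gives $\cO^\times=\{b\in K\mid b\cM=\cM\}$, so $a\cM=\cM$ iff $a\in\cO^\times$, which in the valuation ring $\cO$ is equivalent to $a\notin\cM$.

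For part~2, I would pass to the value group using Lemma~\ref{basic}(1): $\cM^n=\cM$ iff $n\cdot v\cM=v\cM$. Working with the valuation $w$ associated with $\cO$, the maximal ideal satisfies $w\cM=0_{wK}^+$, reducing the question to when $n\cdot 0^+=0^+$ holds in $wK$. Lemma~\ref{densdiscr} resolves this cleanly: in the densely ordered case $0^++0^+=0^+$, so induction yields $n\cdot 0^+=0^+$; in the discretely ordered case with smallest positive element $\gamma_0$ we have $0^+=\gamma_0^-$ principal, and $n\cdot 0^+=(n\gamma_0)^-\subsetneq\gamma_0^-$ for $n\ge 2$. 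Since by Lemma~\ref{basic}(2) the ideal $\cM$ is a principal $\cO$-ideal precisely when $w\cM$ is principal, i.e.\ precisely when $wK$ is discretely ordered, the equivalence follows.

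For part~3, I would combine the above in both directions. If $a\notin\cM$ and $\cM$ is nonprincipal as an $\cO$-ideal, parts~1 and~2 yield $a\cM=\cM$ and $\cM^n=\cM$, so $(a\cM)^n=a^n\cM^n=a^{n-1}\cM=\cM=a\cM$ (the middle equality using part~1 together with $a^{n-1}\notin\cM$, which holds by primality of $\cM$). Conversely, $(a\cM)^n=a\cM$ is equivalent to $a^{n-1}\cM^n=\cM$; if $\cM=c\cO$ were principal, this would force $a^{n-1}c^{n-1}\in\cO^\times$, contradicting $c^{n-1}\in\cM^{n-1}\subseteq\cM$ for $n\ge 2$, so $\cM$ must be nonprincipal, whence part~2 collapses the equation to $a^{n-1}\cM=\cM$, and part~1 with primality of $\cM$ forces $a\notin\cM$. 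I foresee no real obstacle; the care required is in the dense-versus-discrete dichotomy in part~2 and the use of primality of $\cM$ in part~3 to pass from $a^{n-1}\notin\cM$ back to $a\notin\cM$.
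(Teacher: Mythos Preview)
Your proof is correct and follows essentially the same strategy as the paper: the dense/discrete dichotomy for $wK$ governs part~2, and parts~1 and~2 combine to give part~3. The differences are purely in execution. For part~1 the paper argues directly ($a\notin a\cM$ gives one direction, $a\in\cO_v\subseteq\cO$ the other), whereas you route through Theorem~\ref{Ovr}(2) and (\ref{O(I)ex}); both are immediate. For the converse in part~3 the paper does a case split (if $a\in\cM$ then $w$-values show $(a\cM)^n\subseteq a^n\cM\subsetneq a\cM$; if $\cM=b\cO$ is principal then $(a\cM)^n=(ab)^n\cO\subsetneq ab\cO$), while you factor $(a\cM)^n=a^n\cM^n$ and cancel to reach $a^{n-1}\cM^n=\cM$, then argue by contradiction and primality. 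Your route makes the role of primality of $\cM$ explicit; the paper's route avoids cancelling $a$ and so sidesteps the implicit use of $a\ne 0$. One small point worth stating in your part~1: the equivalence $a\in\cO^\times\Leftrightarrow a\notin\cM$ uses $a\in\cO$, which holds because $a\in\cO_v\subseteq\cO$.
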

\begin{proof}
1): We have $a\notin a\cM$, hence if $a\in\cM$, then $a\cM\ne\cM$.
If $a\notin\cM$, then $a$ is a unit in $\cO$, so $a\cM=\cM$.
\sn
2): The value group of $w$ is not discrete, and hence dense, if and only if $\cM$ is a
nonprincipal $\cO$-ideal. If it is discrete and $\gamma$ is its smallest positive element,
then $\cM=\{b\in K\mid wb\geq\gamma\}$ and $\cM^n=\{c\in K\mid wc\geq n\gamma\}\subsetneq
\cM$ since $n\gamma>\gamma$. If it is dense, then for every $b\in\cM$ there is $c\in K$ 
such that $0<nwc<wb$, whence $b\in \cM^n$; therefore, $\cM^n\subseteq\cM\subseteq\cM^n$ and
consequently, $\cM^n=\cM$.
\sn
3): If $a\notin\cM$ and $\cM$ is a nonprincipal $\cO$-ideal, then by parts 1) and 2), 
$(a\cM)^n= \cM^n=\cM=a\cM$.
If $a\in\cM$, then $wa>0$, whence $a\cM=\{c\in K\mid wc>wa\}$ and $(a\cM)^n\subseteq 
a^n\cM=\{c\in K\mid wc>nwa\}\subsetneq a\cM$ since $nwa>wa$. If $\cM$ is a principal 
$\cO$-ideal, say $\cM=b\cO$ with $b\in\cM$, then $a\cM=ab\cO=\{c\in K\mid wc\geq wab\}$ 
and $(a\cM)^n=(ab\cO)^n=\{c\in K\mid wc\geq nwab\}\subsetneq a\cM$ since $nwab>wab$ and 
$ab\in a\cM$.
\end{proof}

\begin{proposition}
Take $n\geq 2$, $a\in\cO_v$ and $\cO\in\cR$ with maximal ideal $\cM$. Assume that
$(a\cM)^n\ne a\cM$. Then the following statements hold.
\sn
1) We have that
\[
\ann a\cM/(a\cM)^n \>=\> \left\{
\begin{array}{ll}
(a\cM)^{n-1} & \mbox{if $\cM$ is a principal $\cO$-ideal, } \\
(a\cO)^{n-1} = a^{n-1}\cO & \mbox{if $\cM$ is a nonprincipal $\cO$-ideal.}
\end{array}\right.
\]
2) \ The annihilator is equal to $\cM_v$ if and only if $n=2$, $a\notin\cM_v$ and
$\cM=\cM_v$ is a principal $\cO_v$-ideal.
\end{proposition}
\begin{proof}
1): We apply Theorem~\ref{ann1} with $I=a\cM$ and $J=(a\cM)^{n-1}$. Then $\cO(IJ)=
\cO((a\cM)^n)=\cO(a\cM)=\cO(\cM)=\cO$ by parts 2) and 3) of Lemma~\ref{HOM} and 
(\ref{O(I)ex}). Since $I$ and $J$ are already $\cO$-ideals, we obtain that $I\cO(IJ)=I$
and $J\cO(IJ)=J$. We observe that $I=a\cM$ is a principal $\cO$-ideal if and only the same 
is true for $\cM$.

It remains to show that $J\cO(IJ)\dhat=J\dhat$ is equal to $(a\cO)^{n-1}$ if $\cM$ is a
nonprincipal $\cO$-ideal. Denote the valuation associated with $\cO$ by $w$.
In the present case, $J=(a\cM)^{n-1}=a^{n-1}\cM$, and $0$ is the
infimum of $w\cM$, not contained in $w\cM$. Therefore,
$wa^{n-1}\notin wa^{n-1}+w\cM=wa^{n-1}\cM$ is the infimum of $wa^{n-1}\cM$.
Hence by definition, $J\dhat=\{c\mid wc\geq wa^{n-1}\}=a^{n-1}\cO=(a\cO)^{n-1}$. This
completes the proof of part 1).
\sn
2): Assume first that $\ann a\cM/(a\cM)^n=\cM_v\,$. Then $a\cM/(a\cM)^n$ is nontrivial,
whence $a\cM \ne (a\cM)^n$, which by part 3) of Lemma~\ref{aM^n}
means that $a\in\cM$ or $\cM$ is a principal $\cO$-ideal. If the latter holds, then by 
part 1), $a\cM:(a\cM)^n=(a\cM)^{n-1}$. Since $a\cM\subseteq\cM_v\,$, this is equal
to $\cM_v$ if and only if $n-1=1$, $\cM=\cM_v\,$, and $va=0$, where we have used
parts 1) and 3) of Lemma~\ref{aM^n}. On the other hand, if the latter does not hold, then
$a\in\cM$ and by part 1), $\ann a\cM/(a\cM)^n=(a\cO)^{n-1}\subseteq a\cO$. The latter is
contained in $\cM$ but is not equal to $\cM$ since $\cM$ is a nonprincipal $\cO$-ideal.
Thus $\ann a\cM/(a\cM)^n=a\cM:(a\cM)^n\subsetneq\cM\subseteq\cM_v$ and consequently,
$\ann a\cM/(a\cM)^n\ne\cM_v\,$.

For the converse, assume that $n=2$, $a\notin\cM_v$ and $\cM=\cM_v$ is a principal
$\cO_v$-ideal. Then by part 1) together with part 1) of Lemma~\ref{aM^n},
$\ann a\cM/(a\cM)^n=\ann a\cM_v/(a\cM_v)^n=(a\cM_v)^{n-1}=a\cM_v=\cM_v\,$.
\end{proof}


\end{document}